\newtheorem{theorem}{Theorem}[section]
\newtheorem{lemma}[theorem]{Lemma}
\newtheorem{proposition}[theorem]{Proposition}
\newtheorem{remark}[theorem]{Remark}
\newtheorem*{nono-theorem}{Theorem}
\newtheorem{definition}[theorem]{Definition}
\theoremstyle{definition}
\newtheorem{example}[theorem]{Example}
\numberwithin{equation}{section}
\newcommand{\Ker}{\mathrm{Ker}}
\newcommand{\CC}{\mathbb{C}}
\newcommand{\ZZ}{\mathbb{Z}}
\newcommand{\NN}{\mathbb{N}}
\renewcommand{\H}{\ddot{H}_n\left(\kappa\right)}
\newcommand{\Ind}{\mathrm{Ind}}
\newcommand{\W}{\dot{W}_n}
\renewcommand{\u}{u_1,\ldots ,u_n}
\newcommand{\z}{z_1,\ldots ,z_n}
\newcommand{\One}{\mathbf{1}}
\newcommand{\w}{\tilde{w}}
\begin{document}

\title{Irreducible modules for the degenerate double affine Hecke algebra of type $A$ as submodules of Verma modules}

\author{Martina Balagovi\' c}
\address{\newline
School of Mathematics and Statistics\\  Newcastle University\\ UK }
\thanks{martina.balagovic@newcastle.ac.uk} 
\email{martina.balagovic@newcastle.ac.uk}

\begin{abstract}
We give a full classification, in terms of periodic skew diagrams, of irreducible modules in category $\mathcal{O}_{ss}$ for the degenerate double affine Hecke algebra of type $A$ which can be realised as submodules of Verma modules. 
\end{abstract}

\maketitle

\section{ Introduction} 
Irreducible representations in category $\mathcal{O}$ for the degenerate double affine Hecke algebra (trigonometric Cherednik algebra) $\H$ of type $A$  have been classified by Suzuki in \cite{S2}. They are parametrized by \emph{periodic Cherednik diagrams}, which are an infinite, periodic, skew generalization of Young diagrams. Given such a diagram $D$, the construction in \cite{S2} produces a character $\chi_D$ of a commutative subalgebra $\CC[u_1,\ldots, u_n]$ of $\H$. One can induce from any one dimensional representation $\chi$ of the subalgebra $\CC[u_1,\ldots, u_n]$ and get a representation of the entire algebra $\H$; the resulting induced representation $M_{\chi}$ is called the \emph{Verma module}. For a character $\chi=\chi_D$ obtained from a periodic Cherednik diagram $D$, the resulting Verma module $M_\chi=M_D$ has a distinguished quotient $N_D$ called the \emph{small Verma module}, which in turn has a unique irreducible quotient $L_{D}$. All irreducible representations of $\H$ in the appropriately defined category $\mathcal{O}$ can be realised in this way, and it is known which diagrams $D$ produce isomorphic irreducible representations. This gives a parametrization of  simple objects in category $\mathcal{O}$ in terms of equivalence classes of periodic Cherednik diagrams. 

A certain full subcategory $\mathcal{O}_{ss}$ of $\mathcal{O}$ is particularly approachable. Its objects are \emph{semisimple} or \emph{calibrated} modules, defined as those $\H$ modules on which the subalgebra $\CC[u_1,\ldots, u_n]$ acts diagonally. Suzuki and Vazirani \cite{SV} classify such irreducible modules in terms of periodic Cherednik diagrams. They prove that an irreducible module $L_D$ is semisimple if and only if $D$ is a \emph{periodic skew diagram}. This is a combinatorial condition on the arrangement of boxes in $D$, and it directly generalizes the corresponding condition for semisimple representations of the degenerate affine Hecke algebra from \cite{R}.

In fact, many results in the representation theory of (degenerate) double affine Hecke algebras parallel analogous results in the representation theory of (degenerate) affine Hecke algebras. The correspondence is analogous to that between Weyl groups and an affine Weyl groups. For example, irreducible representations of degenerate affine Hecke algebras correspond to (finite) Cherednik diagrams, and semisimple representations correspond to (finite) skew Young diagrams (see \cite{C,Ro}). This paper proves a double affine analogue of a theorem about affine Hecke algebras due to Guizzi, Nazarov, and Papi.

The result in question appears in \cite{GNP}. It relies on the philosophy that submodules are easier to understand and work with than quotients, (e.g. in computations and examples), and seeks to explicitly realise any irreducible module  $L^{\textrm{aff}}_{\chi}$ for the affine Hecke algebra, normally constructed as a quotient of the Verma module $M^{\textrm{aff}}_{\chi}$, as a submodule of (another) Verma module $M^{\textrm{aff}}_{\tau}$. For a character $\chi_D$ corresponding to a Cherednik diagram $D$, the authors consider the character $\tau=w_0\chi$, for $w_0$ the longest element of the symmetric group, construct a homomorphism $M^{\textrm{aff}}_{\chi} \to M^{\textrm{aff}}_{\tau}$ using rescaled intertwiners, and prove that it factors through the quotient map $M^{\textrm{aff}}_{\chi}\to L^{\textrm{aff}}_{\chi}$, thus realizing $L^{\textrm{aff}}_{\chi}$ as a submodule of $M^{\textrm{aff}}_{\tau}$. Though the existence of such an inclusion could potentially be deduced from general principles (for example, by proving that every Verma module $M^{\textrm{aff}}_{\tau}$ has a simple socle isomorphic to $L_{w_{0}\tau}^{\textrm{aff}}$, or by considering dual modules), the advantage of the construction in \cite{GNP} is in the explicit construction of the eigenvector in $M^{\textrm{aff}}_{\tau}$ with the required eigenvalue, and the underlying combinatorics of fusion of intertwining operators associated to the symmetric group, continuing the work on fusion developed in \cite{C,D,KNP}.

We study the corresponding question for double affine Hecke algebras $\H$, for $n\in \mathbb{N}, n\ge 2$ and a parameter $\kappa \in \mathbb{N}$. The main result of the paper is the following:
\begin{restatable}{theorem}{submodules}
\label{main}
Let $L_D$ be the semisimple irreducible module for the degenerate double affine Hecke algebra $\H$ associated to a periodic skew diagram $D$. Then $L_D$ can be realised as a submodule of a Verma module if and only if $\kappa=1$, or $\kappa\ge 2$ and the diagram $D$ has no infinite column. 
\end{restatable}

Notice some similarities and differences to the main result of \cite{GNP}. Firstly, in the double affine Hecke algebra setting there is no longest element $w_0$ of the Weyl group (affine symmetric group in our case, and symmetric group in \cite{GNP}), so the choice of $\tau$ such that $L_D$ embeds into $M_\tau$ is more involved. Secondly, in the case of degenerate affine Hecke algebras every irreducible module can be embedded into a Verma module, while for degenerate double affine Hecke algebras there exist irreducible modules for which this is not true. These modules are in a sense a degenerate case, and are ``too small'' to be embedded into an induced module. Thirdly, we only prove our result for semisimple modules, as we use the combinatorics described in \cite{SV} to prove that the image of the homomorphism $M_{D} \to M_{\tau}$ is the irreducible module $L_{D}$. We conjecture that a similar result holds for non-semisimple modules as well. Finally, we note that there is no clear way how the double affine result could follow from general principles such as duality, as socles of Verma modules are not always simple. The motivation for studying the question of the existence of the inclusions in the double affine case is thus both in realising simple modules in a more direct and computation-friendly way, and in understanding the combinatorics of the fusion of intertwiners. 

The method of the proof is as follows. We treat the following three cases separately. For $\kappa>1$ and $D$ a periodic skew diagram with an infinite column, we find explicit torsion in the module $L_D$, and prove that submodules of Verma modules have no torsion. For $\kappa>1$ and $D$ a periodic skew diagram with no infinite column, we construct an element $\w$ of the affine Weyl group depending on $D$, and choose $\tau=\w^{-1} \chi_D$. We then construct a homomorphism $F:M_D\to M_\tau$ in a way analogous to \cite{GNP}, using limits of intertwiners. The proof that the homomorphism is well defined relies on the same tools as in the affine case (the combinatorial study of reduced decompositions of elements of reflection groups), but the combinatorics involved is different due to the different choice of the group element $\w$. The proof that $F$ factors through the quotient map $Q:M_D\to L_D$ is combinatorial, and relies on the results in \cite{SV}, as opposed to the algebraic proof of \cite{GNP} for the affine Hecke algebras, which uses a functor to quantum groups and results from their representation theory. Finally, for $\kappa=1$, we find an explicit embedding of any semisimple irreducible module into a Verma module. 

The methods and the results here obtained for the degenerate double affine Hecke algebras apply analogously to the case of double affine Hecke algebras, with the same proofs. 

The roadmap of the paper is as follows. In Section 2, we review the results about degenerate double affine Hecke algebras and their representations which we will use, most notably from \cite{S1,S2,SV}. We also review the corresponding result of \cite{GNP} concerning embeddings of irreducible modules into Verma modules for affine Hecke algebras. In Section 3 we classify the semisimple irreducible modules which cannot be embedded into Verma modules, for $\kappa>1$. In Section 4, for $\kappa>1$, we classify all semisimple irreducible modules which can be embedded into Verma modules, and give an explicit embedding. In Section 5 we deal with the case $\kappa=1$, and find an explicit embedding of any semisimple irreducible module into a Verma module.

\subsection*{Acknowledgements}
I am grateful to Maxim Nazarov for suggesting the problem, and for the many conversations which helped me shape the question. I wish to thank Pavel Etingof for the suggestion to look at the size of modules. I am grateful to Monica Vazirani for a very helpful conversation at the AIM, where she told me about \cite{SV} and her PhD thesis. Paolo Papi, Valerio Toledano Laredo, Vincent van der Noort and James Waldron all read preprints of this paper and improved the exposition with their comments. This work was supported by the EPSRC grant EP/I014071/1 and  the EPSRC grant EP/K025384/1.

\section{Preliminaries}\label{preliminaries}

\subsection{The Weyl group and the affine Weyl group of type $A$} 
All the material in this section is standard and can be found in \cite{H}.

\begin{definition} 
For integer  $n\ge 2$, the extended affine Weyl group of type $A_n$ is the group $\dot{W}_n$ with generators $s_1,\ldots ,s_{n-1}$ and $x_1^{\pm 1},\ldots , x_n^{\pm 1}$ and relations: 
\begin{align*}
s_i^2&=1 \\
s_i s_{i+1} s_i &= s_{i+1} s_i  s_{i+1} \\
s_is_j&= s_js_i \,\,\,\,\,\,\, |i-j|\ne 1 \\
x_ix_j&= x_jx_i \\
s_i x_i &= x_{i+1} s_i \\
s_i x_j &= x_{j} s_i \,\,\,\,\,\,\, j\ne i,i+1.
\end{align*}
\end{definition}

The subgroup generated by $s_1,\ldots ,s_{n-1}$ is the symmetric group $W_n$.
It is the Weyl group of type $A_n$, and it acts on the $\mathfrak{gl}_n$ weight lattice $P=\oplus_{i=1}^n \ZZ \epsilon_i$ by the permutation action $w(\epsilon_i)=\epsilon_{w(i)}$ for $w\in W_n$. The subgroup of $\W$ generated by  $x_1^{\pm 1},\ldots , x_n^{\pm 1}$ is isomorphic to this lattice written multiplicatively. The simple roots for $W_n$ are $\alpha_i=\epsilon_i-\epsilon_{i+1}$ for $i=1,\ldots ,n-1$. The group $\dot{W}_n$ is isomorphic to the semidirect product of $W_n\ltimes P$, and its group algebra is $\CC[\dot{W}_n]=\CC[W_n]\ltimes \CC[x_1^{\pm},\ldots , x_n^{\pm}]$. 

We extend the root lattice to $\oplus_{i=1}^n \ZZ \epsilon_i\oplus \ZZ \mathbf{c}$, and define $\epsilon_{i+kn}=\epsilon_i-k\mathbf{c}$ for $i=1,\ldots ,n$ and $k\in \ZZ$. Then the affine roots $\mathcal{R}$ are $\alpha_{i,j}=\epsilon_i-\epsilon_j$; they satisfy $\alpha_{i+n,j+n}=\alpha_{i,j}$. Positive affine roots can be chosen to be $\mathcal{R}_+=\{\alpha_{i,j}| j>i \}$.

We will use another well known presentation of $\dot{W}_n$. Let $s_{ij}\in W_n$ be the transposition of $i$ and $j$; in particular  $s_i=s_{i,i+1}$. Set $s_0=x_1x_n^{-1}s_{1n}$ and $\pi=x_1s_1s_2\ldots s_{n-1}$. Abusing notation, set $s_{i+kn}=s_i$ for $0\le i\le n-1$, $k\in \mathbb{Z}$. Then $\dot{W}_n$ is generated by $s_0,s_1,\ldots , s_{n-1}, \pi^{\pm 1}$, with the relations: 
\begin{itemize}
\item for any $n:$
\begin{align*}
s_i^2&=1 \\
\pi s_i\pi^{-1} &= s_{i+1};
\end{align*} 
\item for $n\ge 3$, in addition to the above: 
\begin{align*}
s_i s_{i+1} s_i&=s_{i+1} s_i  s_{i+1} \\
s_is_j&=s_js_i \,\,\,\,\, |i-j|\not\equiv 1 \,\, \pmod{n}.
\end{align*}
\end{itemize}

The length function on $\W$ is determined by $l\left(\pi\right)=0$, $l\left(s_i\right)=1$. Let $\W^0$ be the subgroup generated by $s_0,\ldots , s_{n-1}$.  

\subsection{Degenerate double affine Hecke algebra of type $A$}\label{defDAHA}

In the following subsections we recall the definition of degenerate double affine Hecke algebras and some results from \cite{S1,S2,SV}.

\begin{definition}
For integers $n\ge 2$ and $\kappa \ge 1$, the degenerate double affine Hecke algebra (trigonometric Cherednik algebra) of type $A$ is the unital associative algebra $\H$ over $\CC$ such that:
\begin{itemize}
\item[(i)] as a vector space, $\ddot{H}_n\left(\kappa\right)=\CC[\dot{W}_n] \otimes \CC[u_1,\ldots , u_n]$;
\item[(ii)] the natural inclusions $\CC[\dot{W}_n] \hookrightarrow \ddot{H}_n\left(\kappa\right)$ and $\CC[u_1,\ldots , u_n] \hookrightarrow \H$ are algebra homomorphisms;
\item[(iii)] the relations between the generators of $\dot{W}_n$ and $u_1,\ldots , u_n$ are as follows:
\begin{align*}
s_i u_i &= u_{i+1} s_i -1 \,\,\,\,\,\,  i=1\ldots n-1 \\
s_0 u_n &= \left(u_{1}-\kappa\right) s_0 -1 \\
s_i u_j &=  u_j s_i \,\,\,\,\,\, j\not\equiv i,i+1 \, \pmod{n} \\
\pi u_i \pi^{-1}  &=  u_{i+1}  \,\,\,\,\,\,  i=1\ldots n-1  \\ 
\pi u_n \pi^{-1}  &=  u_{1}-\kappa.
\end{align*}
\end{itemize}

The subalgebra of $\ddot{H}_n\left(\kappa\right)$ generated by $\CC[W_n]$ and $\CC[u_1,\ldots , u_n]$ is the degenerate affine Hecke algebra $\dot{H}_n$. 

\end{definition}

While the above definition makes sense for any $\kappa \in \mathbb{C}$, this restriction is common (see \cite{S2,SV}) because the behaviour for $\kappa\in \mathbb{C}\setminus \mathbb{Q}$ is very simple (the appropriately defined category of representations is semisimple), the behaviour for $\kappa\in  \mathbb{Q}\setminus \{0\}$ can be deduced from the behaviour for $\kappa\in \mathbb{N}$, and the behaviour for $\kappa=0$ is very different and usually considered separately. 

The commutator between $u_i$ and $x_j$ can be computed from the above definition as
\begin{equation*}
[u_i,x_j] = \begin{cases}
 \kappa x_i +x_i \sum_{k<i}x_k s_{ki}+\sum_{k>i}x_i s_{ik} &i=j\\
-x_{\min\{ i,j \}}s_{ij} &i\ne j. 
\end{cases}
\end{equation*}
Multiplication in the algebra induces an isomorphism $\CC[x_1^{\pm},\ldots , x_n^{\pm}]\otimes \CC[W_n] \otimes \CC[u_1,\ldots , u_n] \to \ddot{H}_n\left(\kappa\right)$.

It is convenient to define $u_{j+kn}=u_j-k\kappa$ for $j,k\in \ZZ$, $1\le j\le n$. With that convention and the convention $s_{i+kn}=s_{i}$, we can uniformly write the relations as: for all $i,j\in \mathbb{Z}$,
\begin{align*}
s_i u_i &=u_{i+1} s_i -1 \\
s_i u_j &=u_{j} s_i  \quad \textrm {if } j\not\equiv i,i+1 \pmod{n} \\
\pi u_i \pi^{-1}&=u_{i+1}.
\end{align*}

\subsection{An action of $\dot{W}_n$ on $\ZZ$ and on $\CC^n$}\label{notation}

Consider the permutation action modulo $n$ of the group $\dot{W}_n$ on the set $\ZZ$, introduced in \cite{L}, and determined by:
\begin{align*}
s_i \left(j\right)&=\begin{cases}  j+1 & j \equiv i \pmod{n} \\ j-1 & j \equiv i+1 \pmod{n} \\ j & \textrm{otherwise} \end{cases} \\
\pi \left(j\right) &= j+1.
\end{align*}
From this it follows that $$x_i\left(j\right)=\begin{cases}  j+n & j \equiv i \pmod{n} \\  j & \textrm{otherwise.} \end{cases}$$
The action of $\W$ on the set $\mathcal{R}$ of affine roots coincides with the action given by setting $w\left(\epsilon_i\right)=\epsilon_{w(i)}.$

For $\xi=\left(\xi_1,\ldots ,\xi_n\right)\in \CC^n$, we define the functional $\xi:\mathbb{C}[u_1,\ldots, u_n]\to \mathbb{C}$ by $\xi\left(u_i\right)=\xi_i$. Accordingly, extend the indexing of $\xi$ to $\ZZ$  by defining $\xi_{i+kn}=\xi_i-k\kappa$; this is consistent with the above convention $u_{i+kn}=u_i-k\kappa$. Define the action of $\dot{W}_n$ on $\mathbb{C}^n$ by setting $\left(w\xi\right)_i=\xi_{w^{-1}(i)}.$

\subsection{Periodic diagrams}\label{defD}

We now recall the definitions of the combinatorial objects which parametrize irreducible representations of $\H$, periodic Cherednik diagrams and periodic skew Young diagrams. These diagrams are certain sets $D$ of integral points in the plane, generalising Young diagrams. This is one of the possible parametrizations of irreducible representations of various (double) affine Hecke algebras, a version of which is used in \cite{GNP,C,SV,S1,S2}; another one is by Zelevinsky's multisegments, defined in \cite{Z}. As is usual for Young diagrams, we will draw points $\left(a,b\right)\in D\subseteq \mathbb{Z}^2$ as boxes, labelling integer points in the plane like rows and columns of an infinite matrix (rows with labels increasing downward and columns with labels increasing to the right).

\begin{definition}\label{defdiagram}
\begin{enumerate}
\item 
Let $n,m,l$ be integers such that $n\ge 2$, $1\le m\le n$. A \emph{periodic Cherednik diagram} of degree $n$ and period $\left(m,-l\right)$ is a set $D\subseteq \ZZ^2$ such that: 
\begin{itemize}
\item[(i)] restricted to each row, $D$ is a non-empty segment:
$$\forall a \in \ZZ \,\, \exists \, \mu_a \le \lambda_a \textrm{ s.t. } \{b | \left(a,b\right)\in D \}=\left[\mu_a, \lambda_a \right]=\{\mu_a, \mu_a+1,\ldots , \lambda_a \}; $$
\item[(ii)] there is a total of  $n$ points in rows labeled $1$ to $m$:
$$\sum_{a=1}^m \left(\lambda_a-\mu_a+1\right)=n;$$
\item[(iii)] it is periodic of period $\left(m,-l\right)$:
$$D+\ZZ\cdot \left(m,-l\right)=D;$$
\item[(iv)] its left and right edge satisfy:
$$\forall a \,\, \mu_{a+1}\le \mu_a +1 \textrm{ and} $$
$$\textrm{if } \mu_{a+1}= \mu_a +1 \textrm{ then } \lambda_{a+1}\le  \lambda_a +1.$$
\end{itemize}
\item A periodic Cherednik diagram $D$ is called a periodic skew diagram if instead of \emph{(iv)} it satisfies a stronger condition:
\begin{itemize}
\item[(iv')] 
$$\forall a \quad \mu_{a+1}\le \mu_a  \textrm{ and } \lambda_{a+1}\le  \lambda_a.$$
\end{itemize}
\end{enumerate}
\end{definition}

Because of periodicity, specifying a diagram of degree $n$ and period $\left(m,-l\right)$ is equivalent to specifying its first $m$ rows. We call this set the \emph{fundamental domain} of $D$. Given $\mu=\left(\mu_1,\ldots ,\mu_m\right)$ and $\lambda=\left(\lambda_1,\ldots , \lambda_m\right)$, the other endpoints of row segments can be calculated as $\mu_{i+km}=\mu_{i}-kl$, $\lambda_{i+km}=\lambda_{i}-kl$. 

If $D$ is a periodic skew diagram of period $\left(m,-l\right)$, then $l\ge 0$. It is easy to see that if $D$ is a periodic skew diagram and $\left(a,b\right)\in D$, $\left(a+s,b+t\right)\in D$ for some $s,t \ge 0$, then $\left(a+s',b+t'\right) \in D$ for all $0\le s'\le s$, $0\le t'\le t$.

\begin{example}
In the following examples we draw the first $2m$ rows of a diagram. \vspace{0.5cm}

\begin{tabular}{ccc}
Not a periodic Cherednik diagram &A periodic Cherednik diagram & A periodic skew diagram \\
\young(:\hfil\hfil\hfil,:::\hfil,::\hfil\hfill,\hfil\hfil\hfil,::\hfil,:\hfil\hfil)  & \young(\hfil\hfill,:\hfil \hfil,:\hfil\hfil\hfill\hfil,\hfil\hfill,:\hfil \hfil,:\hfil\hfil\hfill\hfil)   &   \young(:::\hfil\hfill,:::\hfil \hfil,::\hfil\hfill\hfil,:\hfil\hfill,:\hfil \hfil,\hfil\hfill\hfil) \\ 
$n=6, m=3, l=1$ & $n=8, m=3, l=0$ & $n=7, m=3, l=2$ \\
$\mu=(1,3,2)$ & $\mu=(1,2,2)$ & $\mu=(2,2,1)$    \\
$\lambda=\left(3,3,3\right)$ & $\lambda=\left(2,3,5\right)$ &  $\lambda=\left(3,3,3\right)$
\end{tabular}

\end{example}

\subsection{Tableaux on periodic diagrams and the content of a tableau}\label{defT}
Next, we label the boxes of $D$ by integers.
\begin{definition}
\begin{itemize}
\item[(i)]A tableau on a periodic Cherednik diagram $D$ of degree $n$ and period $\left(m,-l\right)$ is a bijection $T:D\to \ZZ$, such that for any box $\left(a,b\right)\in D$ $$T\left(\left(a,b\right)+k\left(m,-l\right)\right)=T\left(a,b\right)+kn.$$
\item[(ii)]A tableau is said to be standard  if $T$ is increasing along rows and columns: 
$$\textrm{if } \left(a,b\right),\left(a,b+1\right)\in D \textrm{ then } T\left(a,b\right)<T\left(a,b+1\right),$$
$$\textrm{if } \left(a,b\right),\left(a+1,b\right)\in D \textrm{ then } T\left(a,b\right)<T\left(a+1,b\right).$$
\item[(iii)]A row reading tableau $T_0$ on $D$ is the tableau determined on the first $m$ rows by the condition: for $a=1,\ldots m$ and $\left(a,b\right)\in D$,
$$T_0\left(a,b\right)=\sum_{i=1}^{a-1}\left(\lambda_i-\mu_i+1\right)+b-\mu_a+1.$$ 
\item[(iv)]The content of a tableau $T$ is the function $C_T:\ZZ\to \ZZ$ given by
$$ \textrm{for } i=T\left(a,b\right), \,\,\, C_T(i)=b-a.$$
\item[(v)] For a fixed diagram $D$ we define the action of $\W$ on the set of all tableaux on $D$ by $$\left(wT\right)\left(a,b\right)=w\left(T\left(a,b\right)\right).$$
\end{itemize}
\end{definition}

\begin{example}In the following examples, $n=4$, $m=2$, $l=1$. We place the diagram so that the top left box on this picture is $\left(1,1\right)$, and calculate $\left(C_T(1),C_T\left(2\right),C_T\left(3\right),C_T\left(4\right)\right)$.
\vspace{0.08cm}

\begin{tabular}{ccc}
Periodic tableau & Standard periodic tableau & Row reading tableau \\
\young(:21,:34,65,78)  & \young(:13,:24,57,68)  & \young(:12,:34,56,78)  \\
$\left(1,0,-1,0\right)$ & $\left(0,-1,1,0\right)$ & $\left(0,1,-1,0\right)$\\
\end{tabular}
\end{example}

\subsection{Verma modules, small Verma modules and their irreducible quotients}\label{defML}

Let $D$ be a periodic Cherednik diagram of degree $n$ and period $\left(m,-l\right)$, and $T_0$ its row reading tableau. Assume $m,l$ are such that $\kappa=m+l\ge 1$, and consider the degenerate double affine Hecke algebra $\ddot{H}_n\left(\kappa\right)$. Let $\chi=\chi_D$ be the character of the subalgebra $\CC[u_1,\ldots , u_n]$ determined by:
$$\chi_i=\chi_D\left(u_i\right)=C_{T_0}(i).$$
This is consistent with our conventions $u_{i+kn}=u_i-k\kappa$ and $\chi_{i+kn}=\chi_i-k\kappa$; if $i=T_{0}\left(a,b\right)$, then $i+kn=T_0\left(a+km,b-kl\right)$, so $$C_{T_0}\left(i+kn\right)=\left(b-kl\right)-\left(a+km\right)=\left(b-a\right)-k\left(m+l\right)=C_{T_0}(i)-k\kappa.$$

For any character $\chi$ of $\CC[u_1,\ldots , u_n]$, let $\CC_\chi=\mathbb{C} \mathbf{1}_\chi$ be the one-dimensional representation of $\CC[u_1,\ldots , u_n]$, determined by $u_i \mathbf{1}_\chi=\chi_i \mathbf{1}_\chi$. If $\chi=\chi_D$, we sometimes write $\mathbf{1}_D=\mathbf{1}_\chi$.

\begin{definition}
The standard or Verma module associated to a character $\chi$ of  $\CC[u_1,\ldots , u_n]$ is the $\ddot{H}_n\left(\kappa\right)$ module $$M_{\chi}=\Ind_{\CC[u_1,\ldots , u_n]}^{\ddot{H}_n\left(\kappa\right)} \CC_\chi.$$

If $\chi=\chi_D$ for some diagram $D$, we write $M_D=M_\chi$.
\end{definition}

The module  $M_{\chi}$ is canonically isomorphic to $\CC[\dot{W}_n]$ as a $\CC[\dot{W}_n]$ module, and has a basis $\{ w\mathbf{1}_\chi | w\in \dot{W}_n \}$.

For a diagram $D$ with the row reading tableau $T_0$ and $\chi=\chi_D$, let $I$ be the set of all integers $i \in \{1,\ldots ,n\}$ such that $i$ and $i+1$ are in the same row of $T_0$. Let $W_I\subseteq W_n$ be the subgroup generated by $\{s_i |i\in I\}$. This is the parabolic subgroup of $W_n$, consisting of row preserving permutations of $T_0$. Extend the representation $\CC_{\chi}$ to be a trivial representation of $\CC[W_I]$. This is consistent with the relations of $\H$, and makes $\CC_{\chi}$ into a representation of the subalgebra $\ddot{H}_I\left(\kappa\right)$ of $\H$ generated by $\CC[W_I]$ and $\CC[\u]$. 

\begin{definition}
The small Verma module associated to $D$ is the module $$N_D=\Ind_{\ddot{H}_I\left(\kappa\right)}^{\ddot{H}_n\left(\kappa\right)} \CC_\chi.$$
It is isomorphic to the quotient of $M_D$ by the left $\H$ submodule generated by $\{s_i-1 | i\in I\}$.
\end{definition}

As a $\CC[\dot{W}_n]$ module, $N_D$ is canonically isomorphic to $\CC[\dot{W}_n]/\CC[W_I]$. Abusing notation, write the cyclic generator of $N_D$ which is the image under the quotient morphism of $\mathbf{1}_D\in M_D$  as $\mathbf{1}_D\in N_D$.

\begin{remark}In type $A$, the trigonometric Cherednik algebra is closely related to the rational Cherednik algebra (see \cite{S1}). Define $y_i\in \H$ by the equation $u_i=x_iy_i+\sum_{j<i}s_{ji}$. The subalgebra of $\H$ generated by $x_i,y_i$, $i=1,\ldots, n$ and $s_i$, $i=1,\ldots n-1$ is isomorphic to the rational Cherednik algebra, while the localization of the rational Cherednik algebra at $x_i^{-1}$ recovers the trigonometric Cherednik algebra $\H$.

Consider the Verma module for the rational Cherednik algebra whose lowest weight is the trivial representation of $W_n$. As a vector space this module is isomorphic to $\mathbb{C}[x_1,\ldots, x_n]$, with $x_i$ acting by multiplication, $W_n$ by permutation action, and $y_i$ act by Dunkl operators. On the lowest weight vector, all $y_i$ act as $0$, while the Jucys-Murphy elements act by scalars $0,1,\ldots ,n-1$. Localizing this representation at $x_i^{-1}$, we get the small Verma module for the trigonometric Cherednik algebra associated to the diagram $D$ whose fundamental domain is  $\raisebox{-2pt}{\young(123\hfill \cdots \hfill n)}$. This representation is isomorphic to $\mathbb{C}[x_1^{\pm 1},\ldots, x_n^{\pm 1}]$ as a vector space. The generators $u_i$ respect the natural grading by degree of polynomials, and their eigenvectors in $N_D$ are non-symmetric Jack polynomials. (See \cite{KS,M}).
\end{remark}

\begin{definition}
Let $\mathcal{O}$ be the category of $\H$ modules which are finitely generated, locally finite for the action of $\CC[u_1,\ldots , u_n]$, and such that the generalized eigenvalues for the action of $\left(u_1,\ldots , u_n\right)$ are integers. Let $\mathcal{O}_{ss}$ be the full subcategory of  $\mathcal{O}$ consisting of those modules on which $\left(u_1,\ldots , u_n\right)$ diagonalize. 
\end{definition}

Verma modules and small Verma modules associated to periodic Cherednik diagrams belong to category $\mathcal{O}$. The following theorem describes the irreducible objects in these categories.

\begin{theorem}[\cite{S2,SV}]\label{SVss}
\begin{itemize}
\item[(i)] If $D$ is a periodic Cherednik diagram of degree $n$ and period $\left(m,-l\right)$, $n,m\ge 1$, $\kappa=m+l\ge 1$, then the small Verma module $N_{D}$ for $\H$ has a unique simple quotient. Call this simple module $L_D$. 
\item[(ii)]For any simple module $L$ in category $\mathcal{O}$ of $\H$ representations there exists $1\le m\le n$, and a periodic Cherednik diagram $D$ of degree $n$ and period $\left(m,-\left(\kappa-m\right)\right)$, such that $L$ is isomorphic to $L_D$ as $\H$ modules.
\item[(iii)]The modules $ L_D$ and  $L_{D'}$ are isomorphic if and only if there exists $r\in \ZZ$ such that $D'=D+\left(r,r\right)$.
\item[(iv)]An irreducible module  $ L_D$ is in  $\mathcal{O}_{ss}$ if and only if $D$ is a periodic skew diagram.
\end{itemize}
\end{theorem}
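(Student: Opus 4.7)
The plan is to follow the strategy of \cite{S2,SV}, reducing each part of the theorem to a weight-space analysis combined with combinatorial identities on periodic Cherednik diagrams. For (i), the key observation is that the $\chi_D$-weight space of $N_D$ is one-dimensional, spanned by the cyclic generator $\mathbf{1}_D$; no proper submodule can contain $\mathbf{1}_D$, so the sum of all proper submodules is again proper, yielding a unique maximal proper submodule and hence a unique simple quotient $L_D$. For (ii), I would begin with any simple $L\in\mathcal{O}$, decompose it into its finitely many generalized $\CC[\u]$-weight spaces using local finiteness, and choose a weight $\chi$ that is maximal with respect to a length-function partial order on $\W$. A weight vector at $\chi$ is then annihilated by the appropriate ``raising'' parts of the intertwiners, producing a surjection from a small Verma $N_\chi$ onto $L$; the combinatorial core is to show that the integrality hypothesis defining $\mathcal{O}$ forces $\chi=\chi_D$ for some periodic Cherednik diagram $D$ of degree $n$ with period $(m,-l)$ and $m+l=\kappa$. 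For (iii), the shift $D\mapsto D+(r,r)$ preserves the content $b-a$ of every box and hence preserves the character $\chi_D$, while distinct equivalence classes yield genuinely different weight multi-sets.

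For (iv), the plan is to use explicit intertwiners $\phi_i$ arising from $s_i$, together with affine analogues built from $s_0$ and $\pi$, to construct on $L_D$ a family of vectors indexed by standard tableaux on $D$, each a $\CC[\u]$-eigenvector with eigenvalue given by the content function $C_T$. The subalgebra $\CC[\u]$ acts diagonally on $L_D$ precisely when all such tableau vectors are nonzero and span $L_D$, which in turn happens if and only if every $\phi_i$ acts invertibly on every relevant weight vector --- equivalently, iff $C_T(i)\neq C_T(i+1)$ for every standard tableau $T$ on $D$ and every $i\in\ZZ$. The combinatorial statement of \cite{SV} is that this non-degeneracy condition is equivalent to $D$ being a periodic skew diagram.

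The main obstacle is precisely this combinatorial equivalence at the heart of (iv). One direction requires, given a periodic Cherednik diagram $D$ that violates the skew condition, an explicit construction of a standard tableau $T$ on $D$ with $C_T(i)=C_T(i+1)$ for some $i$; this is delicate because the offending local configuration must be extended to a tableau that is genuinely standard across all infinitely many rows and is compatible with the periodicity constraint $T((a,b)+k(m,-l))=T(a,b)+kn$. The converse requires a uniform content-distinctness argument for all standard tableaux on a periodic skew $D$, together with a verification that the candidate basis indexed by such tableaux actually spans $L_D$ rather than a proper submodule. The affine intertwiner associated to $s_0$, which mixes the top and bottom of the fundamental domain, is the source of additional complications relative to the finite (non-affine) case.
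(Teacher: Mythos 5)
The paper does not contain a proof of Theorem~\ref{SVss}; it is stated as a citation of \cite{S2,SV} and used as a black box, so there is no in-paper proof to compare against. Judged on its own terms, your sketch has some genuine problems. In part (i), the claim that ``the $\chi_D$-weight space of $N_D$ is one-dimensional, spanned by $\One_D$'' is not correct if ``weight space'' means generalized $\CC[\u]$-eigenspace: the generalized $\chi_D$-eigenspace of $N_D$ is spanned by those cosets $wW_I$ with $w\chi_D=\chi_D$, and the stabilizer of $\chi_D$ typically contains reflections $s_{ij}$ coming from two boxes of $T_0$ on the same diagonal but in different rows. For instance, for $\kappa=2$ and the periodic Cherednik diagram with fundamental domain $\young(12,:3)$ and period $(2,0)$, one has $\chi_D=(0,1,0)$ and $s_{13}\One_D=s_1s_2\One_D$ is a nonzero vector of $N_D$ of generalized weight $\chi_D$ not proportional to $\One_D$. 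What is actually needed is that the honest $\chi_D$-eigenspace (not generalized) is one-dimensional; this, combined with local finiteness, does give a unique maximal proper submodule, but the one-dimensionality of the eigenspace is a substantive fact about the intertwiners $\Phi_w$ for $w$ in the stabilizer, and your argument supplies no justification for it.

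In part (iii), the statement that the diagonal shift $D\mapsto D+(r,r)$ ``preserves the character $\chi_D$'' is simply false. The content $b-a$ of any fixed box is of course unchanged, but the row-reading tableau $T_0$ is pinned to the fundamental domain in rows $1,\ldots,m$, which does not shift with $D$; already $\chi_{D+(m,m)}=\chi_D+\kappa(1,\ldots,1)\neq\chi_D$. The isomorphism $L_D\cong L_{D+(r,r)}$ therefore cannot be read off from equality of characters, and must instead be produced explicitly inside the modules, for example by sending $\One_{D+(m,m)}$ to $\Phi_\pi^n\One_D$; the ``only if'' direction likewise needs a real argument, not just a remark about weight multisets. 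Finally, for (ii) and (iv) you correctly isolate the combinatorial cores --- that the weights of a simple object of $\mathcal{O}$ must come from a periodic Cherednik diagram, and that skewness of $D$ is what makes the tableau-indexed eigenvector construction span $L_D$ --- but these are exactly the nontrivial theorems of \cite{S2} and \cite{SV}, and the proposal acknowledges them as obstacles rather than proving them, so as written it does not establish the statement.
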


\subsection{Intertwining operators}\label{Intertwiners}
Let us consider the following elements of the appropriate localization of $\H$: 
\begin{align*}
\Phi_i&=s_i+\frac{1}{u_i-u_{i+1}}\\
\Phi_{\pi^{\pm 1}}&=\pi^{\pm 1}.
\end{align*}
In particular, $\Phi_{0}=s_0+\frac{1}{u_0-u_1}=s_0+\frac{1}{-u_1+u_n+\kappa}$, and $\Phi_{i+nk}=\Phi_i$.
They satisfy:
\begin{itemize}
\item for any $n$:
\begin{align*}
\Phi_\pi\Phi_i &= \Phi_{i+1}\Phi_\pi \\
\Phi_i ^2 &=1- \frac{1}{\left(u_i-u_{i+1}\right)^2}; 
\end{align*}
\item for $n\ge 3$, in addition to the above: 
\begin{align*}
\Phi_i\Phi_{i+1}\Phi_i&=\Phi_{i+1}\Phi_{i}\Phi_{i+1}, \\
\Phi_i\Phi_{j}&=\Phi_{j}\Phi_{i}  \,\,\,\, |i-j|\not\equiv1 \pmod{n} ;
\end{align*}
\item for any $n$:
\begin{align*}
\Phi_i u_i&=u_{i+1} \Phi_i \\
\Phi_i u_{i+1}&=u_i \Phi_i \\
\Phi_i u_j&=u_j \Phi_i   \,\,\,\, j\not\equiv i,i+1 \pmod{n}\\
\Phi_\pi u_i&=u_{i+1}\Phi_\pi.
\end{align*}
\end{itemize}

If $w=\pi^rs_{i_1}\ldots s_{i_l}$ is a reduced expression in $\dot{W}_n$, define $\Phi_w=\Phi_\pi^r \Phi_{i_1}\ldots \Phi_{i_l}$; it does not depend on the reduced decomposition. The operators $\Phi_w$ satisfy $\Phi_w u_i \Phi_{w}^{-1}=u_{w(i)}.$ In representations, they act as maps between different eigenspaces of $\u$, and we call them \emph{intertwiners}. Corresponding operators $\Phi_w$ have been considered in \cite{C,Ro}.

Assume that $M$ is a representation of $\H$, $\xi = \left(\xi_1,\ldots ,\xi_n\right)$ is an eigenvalue of $\left(u_1,\ldots ,u_n\right)$, and $M[\xi]=\{v\in M | u_iv=\xi_i v\}$  the corresponding eigenspace. Then:
\begin{itemize}
\item[(i)]$\Phi_{\pi}: M[\xi] \to M[\pi\left(\xi\right)]$  is an isomorphism with the inverse $\Phi_{\pi^{-1}}$;
\item[(ii)]if $\xi_i-\xi_{i+1} \ne 0$, then $\Phi_i|_{M[\xi]}=s_{i}+\frac{1}{\xi_i-\xi_{i+1}}:M[\xi] \to M[s_i\left(\xi\right)]$;
\item[(iii)]if $\xi_i-\xi_{i+1} \ne 0,\pm 1$, then $\Phi_i:M[\xi] \to M[s_i\left(\xi\right)]$  is an isomorphism with inverse $\frac{\left(\xi_i-\xi_{i+1}\right)^2}{1-\left(\xi_i-\xi_{i+1}\right)^2}\Phi_i$.
\end{itemize}

Define also the rescaled intertwiners $\Psi_i$. For an eigenvalue $\xi\in \mathbb{C}^n$ with $\xi_i-\xi_{i+1}\ne 0,\pm 1$, define $$\Psi_i |_{M[\xi]}=\frac{\xi_i-\xi_{i+1}}{\xi_i-\xi_{i+1}+1}\Phi_i=\frac{\xi_i-\xi_{i+1}}{\xi_i-\xi_{i+1}+1}s_i+\frac{1}{\xi_i-\xi_{i+1}+1}.$$ They satisfy $\Psi_i^2=1$, along with the braid relations $\Psi_i\Psi_{i+1}\Psi_i=\Psi_{i+1}\Psi_{i}\Psi_{i+1}, \Psi_i \Psi_j=\Psi_j\Psi_i$ for $|i-j|>1$. This enables us to define $\Psi_w$ for $w\in \W$.

Informally, the usefulness of $\Phi_w$ and $\Psi_w$ comes from the fact that, using $\Phi_w$ instead of $w$ turns the the Hecke algebra relations between $s_i$ and $u_i$ turn into simpler semidirect product relations between $\Phi_i$ and $u_i$. For semisimple representatons of $\H$, where $u_i$ diagonalize and $\Phi_w$ are maps between their joint eigenspaces, understanding the structure of the representation means understanding the combinatorics of the eigenvalues and the action of $\dot{W}_n$ on them.

Notice that for a Cherednik diagram $D$, the small Verma module $N_D$ is the quotient of the Verma module $M_D$ by the submodule generated by $\{\Phi_i | i \in I \}$.

\subsection{Irreducible semisimple modules}\label{resultsSV}

\begin{theorem}[\cite{SV}]\label{defss}
Let $D$ be a periodic skew diagram of degree $n$ and period $\left(m,-l\right)$, $\chi=\chi_D$ the associated character of $\CC[u_1,\ldots, u_n]$, and $M_D$ and $L_D$ the corresponding Verma module and its irreducible quotient for the algebra $\H$. 
\begin{enumerate}
\item The irreducible module $L_D$ is isomorphic to the module with the basis
$$\{ v_T \,\, | \,\, T \textrm{ a standard tableau on D } \}$$
and the action
\begin{align*}
u_i v_T &= C_T(i) v_T \\
\pi v_T &= v_{\pi \left(T\right)} \\
s_i v_T&=\begin{cases}\frac{C_T(i)-C_T\left(i+1\right)+1}{C_T(i)-C_T\left(i+1\right)}v_{s_i\left(T\right)} -\frac{1}{C_T(i)-C_T\left(i+1\right)}v_T, &  s_i\left(T\right) \textrm{ standard}\\ -\frac{1}{C_T(i)-C_T\left(i+1\right)}v_T, &  s_i\left(T\right) \textrm{ not standard} \end{cases}
\end{align*}
\item Identifying $L_D$ with this module, the quotient map $Q: M_D\twoheadrightarrow L_D$ is the unique $\H$  morphism such that $Q\left(\mathbf{1}_D\right)=v_{T_0}$.
\item The kernel of $Q: M_D\twoheadrightarrow L_D$ is generated as an $\H$-module by the set 
$$\{ \Phi_i \Phi_w \mathbf{1}_D | wT_0 \textrm{ standard, } s_iwT_0 \textrm{ not standard}\}.$$

\end{enumerate}
\end{theorem}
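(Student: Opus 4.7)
The plan is to treat the three parts in order, beginning with an explicit construction of the candidate module and ending with an identification of the kernel of the quotient map $Q$.

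For (1), I would define $V_D$ to be the vector space with basis $\{v_T\}$ indexed by standard tableaux on $D$, and declare the operators $u_i$, $\pi$, $s_i$ to act by the stated formulas. Verifying that these formulas define a representation of $\H$ reduces to a finite list of checks against the defining relations. The action of $\u$ is diagonal, so $[u_i,u_j]=0$ is automatic. The relation $\pi u_i\pi^{-1}=u_{i+1}$ follows from $C_{\pi T}(i+1)=C_T(i)$ together with the periodicity $C_T(i+n)=C_T(i)-\kappa$, which holds because $D$ has period $(m,-l)$ with $\kappa=m+l$. The relations between $s_i$ and $u_j$ reduce to a $2\times 2$ calculation on $\mathrm{span}\{v_T,v_{s_iT}\}$ when $s_iT$ is standard and a $1\times 1$ calculation otherwise. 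The quadratic and braid relations for the $s_i$ each reduce to identities on at most six-dimensional $\langle s_i,s_{i+1}\rangle$-orbits; the critical combinatorial input for a periodic skew diagram is that $C_T(i)-C_T(i+1)=\pm 1$ precisely when $i,i+1$ lie in the same row or column of $T$, which is exactly when $s_iT$ is not standard, so no denominator in the formulas ever vanishes. Irreducibility then follows from a connectivity argument: any standard tableau on a periodic skew diagram is linked to $T_0$ by a sequence of simple transpositions whose intermediate tableaux remain standard, so applying rescaled intertwiners to $v_{T_0}$ reaches every $v_T$ up to a nonzero scalar, while any nonzero submodule contains some such $v_T$ via the joint diagonalizability of $\u$ combined with intertwiner action.

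Part (2) is immediate from the universal property of induction: since $M_D$ is generated as an $\H$-module by $\mathbf{1}_D$, any morphism $M_D\to L_D$ is determined by the image of $\mathbf{1}_D$, which must be a weight-$\chi_D$ vector in $L_D$. The vector $v_{T_0}$ satisfies this, since its weight sequence is $(C_{T_0}(1),\ldots,C_{T_0}(n))=\chi_D$, and the corresponding morphism is surjective because $v_{T_0}$ generates $L_D$ by (1); so it must coincide with $Q$.

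Part (3) is the main difficulty. Let $K\subseteq M_D$ be the $\H$-submodule generated by the listed elements. The inclusion $K\subseteq\ker Q$ is a direct computation in $L_D$: when $wT_0$ is standard and $s_iwT_0$ is not, the \emph{not standard} branch of the $s_i$-action formula from (1) gives $\Phi_iv_{wT_0}=\bigl(s_i+\tfrac{1}{C_{wT_0}(i)-C_{wT_0}(i+1)}\bigr)v_{wT_0}=0$, hence $Q(\Phi_i\Phi_w\mathbf{1}_D)=0$. For the reverse inclusion $\ker Q\subseteq K$, I would show that the classes $\{\Phi_w\mathbf{1}_D\bmod K : wT_0\text{ standard}\}$ span $M_D/K$; the surjection $M_D/K\twoheadrightarrow L_D$ then sends this spanning set onto the basis $\{v_{wT_0}\}$ of $L_D$ from (1), forcing it to be an isomorphism and $K=\ker Q$. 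Establishing the spanning property is the hard part, and proceeds by induction on the length of $w$ using the quadratic relation $\Phi_i^2=1-\frac{1}{(u_i-u_{i+1})^2}$ and the braid relations to reduce arbitrary $\Phi_w\mathbf{1}_D$ to \emph{standard} ones: whenever $wT_0$ is standard but $s_iwT_0$ is not, the defining generators of $K$ are precisely what is needed to eliminate those bad transitions. The main obstacle is the combinatorial control showing that this induction terminates, that every intertwiner in play is defined along paths through standard tableaux (no content differences equal to $0$), and that distinct reduced expressions for $w$ yield the same class modulo $K$; this is the combinatorics of standard tableaux on periodic skew diagrams developed in \cite{SV}.
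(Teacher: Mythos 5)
The paper does not actually prove this theorem: it is cited from \cite{SV} (part (1) from their Section 4.3, part (3) from their Lemma 3.17), with part (2) attributed to Frobenius reciprocity. Your proposal therefore does more work than the paper itself; the task here is really to assess whether your reconstruction of the proof is sound.

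Your treatment of (2) matches the paper's stated approach exactly (universal property of induction = Frobenius reciprocity), and your handling of (1) is the correct outline. Two small notes there. First, the clean reason the denominators never vanish is slightly different from what you wrote: the content map is injective on each congruence-class pair $\{i,i+1\}$ for standard tableaux on skew diagrams, i.e. $C_T(i)\ne C_T(i+1)$ always, because if $T(a,b)=i$ and $T(a',b')=i+1$ lay on the same diagonal with $a<a'$, the box $(a',b)$ would be in $D$ by the skew property and its label would be strictly between $i$ and $i+1$, a contradiction; your biconditional ``$C_T(i)-C_T(i+1)=\pm1$ iff $i,i+1$ adjacent'' is in fact also true here but is a consequence of this and is a bit of a detour. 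Second, in the irreducibility argument you should make explicit that the joint $\u$-eigenspaces of $L_D$ are one-dimensional (the content function determines the standard tableau), since this is what lets you extract a single $v_T$ from a nonzero submodule.

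For (3), the inclusion $K\subseteq\ker Q$ is argued correctly, though the step $Q(\Phi_i\Phi_w\mathbf{1}_D)=0$ implicitly uses that $Q(\Phi_w\mathbf{1}_D)$ is a scalar multiple of $v_{wT_0}$ (one-dimensionality of eigenspaces again) and that $\Phi_w$ is pole-free along a reduced word when $wT_0$ is standard; you should say so. The reverse inclusion is where your argument stops short. Saying that an induction on $\ell(w)$ ``using the quadratic and braid relations'' reduces $\Phi_w\mathbf{1}_D$ to standard terms modulo $K$ is plausible, but you have not shown how to handle the case where $wT_0$ is non-standard and no single $s_i$ peels off cleanly: one needs to produce a reduced decomposition that passes through a ``wall'' generator of $K$ (i.e.\ through some $w'$ with $w'T_0$ standard and $s_iw'T_0$ not, with $w=s_iw'w''$), and that intermediate tableaux along the chosen path stay standard so that the intertwiners are defined. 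That is precisely the combinatorial content of \cite{SV} Lemma 3.17, and since you cite it, your proposal is not wrong---but as written it is a plan rather than a proof, and the key termination/path-existence statement should at least be isolated as an explicit lemma rather than folded into the phrase ``the combinatorics developed in \cite{SV}.''
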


Part (1) of this theorem is the section ~4.3. of \cite{SV}, part (2) follows from the definition of $M_D$ as an induced module by Frobenius reciprocity, and part (3) follows from Lemma 3.17 in \cite{SV}. In particular, all eigenspaces of $L_D$ are one dimensional, if $\xi$ is an eigenvalue of $\u$ on $L_D$ then $\xi_i\ne\xi_{i+1}$ for  every $i$, and if $w\xi=\xi$ for some eigenvalue $\xi$ with eigenvector $v\in L_D$ and some $w\ne 1\in \W$, then $\Phi_w v=0$. 

We can describe the module $L_D$ in terms of intertwiners:
\begin{itemize}
 \item if both $T$ and $s_iT$ are standard, 
 $v_{s_iT}= \Psi_i v_T;$
\item if $T$ is standard and $s_iT$ is not, then $s_iv_T$ can be calculated from $\Phi_i v_T=0.$ 
\end{itemize}

We can also describe the quotient map $Q: M_D\twoheadrightarrow L_D$ in terms of intertwiners: 
\begin{itemize}
 \item if $wT_0$ is standard, then using \cite{SV} Lemma 3.17, we get that for some $a_{w'},b_{w'}\in \CC$
$$Q\left(w\One_D\right)= \left(\Psi_w+\sum_{l\left(w'\right)<l\left(w\right)} a_{w'}\Psi_{w'}\right) v_{T_0} = v_{wT_0}+ \sum_{l\left(w'\right)<l\left(w\right)} b_{w'} v_{w' T_0};$$
 \item if $wT_0$ is not standard, then for some $c_{w'}\in \CC$
 $$Q\left(w\One_D\right)=  \sum_{l\left(w'\right)<l\left(w\right)} c_{w'} v_{w' T_0}.$$
\end{itemize}

\begin{lemma}\label{adjacent}
Assume that $D$ is a periodic skew diagram, $T$ a standard tableau on it and $i\in \mathbb{Z}$ is such that $s_iT$ is not standard. Then in $T$, the boxes containing $i$ and $i+1$ are adjacent.
\end{lemma}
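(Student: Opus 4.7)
The plan is to prove the contrapositive: assuming that the boxes $(a_1,b_1):=T^{-1}(i)$ and $(a_2,b_2):=T^{-1}(i+1)$ do not share an edge in $D$, I will show that $s_iT$ remains standard.

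The first step is to recall from Section~\ref{notation} that $s_i$ acts on $\ZZ$ by sending $j\to j+1$ for $j\equiv i\bmod n$, sending $j\to j-1$ for $j\equiv i+1\bmod n$, and fixing every other integer. Consequently $s_iT$ coincides with $T$ on every box whose entry is not congruent to $i$ or $i+1$ modulo $n$. Since both $T$ and $s_iT$ are $(m,-l)$-periodic (the action of $s_i$ on $\ZZ$ commutes with $j\mapsto j+n$), verifying that $s_iT$ is standard reduces to checking the row and column inequalities at those edges of $D$ having at least one endpoint whose $T$-entry is in one of these two residue classes.

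The boxes of the first type form the $(m,-l)$-orbit of $(a_1,b_1)$, and those of the second form the orbit of $(a_2,b_2)$. Translating an edge by a suitable multiple of $(m,-l)$, I may assume that one of its endpoints is $(a_1,b_1)$ or $(a_2,b_2)$ itself, and then inspect the at most four neighbors in $D$. For a neighbor of $(a_1,b_1)$ lying above or to the left, standardness of $T$ gives entry $\le i-1$; the swap moves this by at most $\pm 1$, so the new entry is $\le i<i+1=s_iT(a_1,b_1)$. For a neighbor to the right or below $(a_1,b_1)$, the $T$-entry is $\ge i+1$; if it equals $i+1$ then this neighbor is $(a_2,b_2)$, contradicting non-adjacency, so the entry is $\ge i+2$. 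Then $s_i$ can decrease it by at most $1$, and the new value can equal $s_iT(a_1,b_1)=i+1$ only if the original is $i+2$ with $i+2\equiv i+1\bmod n$, which forces $n=1$ and is excluded by $n\ge 2$. The analysis at $(a_2,b_2)$ is entirely symmetric.

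The only subtle point I expect to have to address carefully is that a neighbor of $(a_1,b_1)$ or $(a_2,b_2)$ may itself carry an entry congruent to $i$ or $i+1$ modulo $n$ through periodicity, i.e.\ a value of the form $i+kn$ or $i+1+kn$ with $k\ne 0$, so that $s_i$ moves it as well. One checks via the equations $km=\Delta a$, $-kl=\Delta b$ with $(\Delta a,\Delta b)\in\{(0,\pm1),(\pm1,0)\}$, together with $m\ge 1$ and $n\ge 2$, that the only way such a coincidence can create a new inversion is exactly the $k=0$ case of direct adjacency excluded by hypothesis. Once this bookkeeping is isolated, the inequalities of the previous paragraph cover every edge and yield standardness of $s_iT$.
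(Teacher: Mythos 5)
Your proof is correct and takes essentially the same approach as the paper's: both rest on the observation that the swap on the residue classes of $i$ and $i+1$ modulo $n$ can only create a new inversion between the values $i+kn$ and $i+1+kn$ for the same $k$, which forces the two boxes to be adjacent, and you simply phrase this as a contrapositive with a neighbor-by-neighbor check. The closing paragraph with $km=\Delta a$, $-kl=\Delta b$ is redundant rather than wrong --- the value-based inequalities you already established in the preceding paragraph handle a neighboring box of any residue class, including periodic translates of $(a_1,b_1)$ or $(a_2,b_2)$ --- so it adds bookkeeping without closing a genuine gap.
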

\begin{proof}
The tableaux $s_iT$ and $T$ only differ by transposing $i+kn$ and $i+kn+1$ for all $k$. Any integer $z\ne i,i+1$ is bigger than $i$ if and only if it is bigger than $i+1$. Thus, the only way that $s_iT$ can be non-standard while $T$ is standard is that in $T$, $i$ and $i+1$ are comparable, meaning they are in the same row or in the same column. As $T$ is standard, there can be no integer between them, so the only possibilities are:
$$  \begin{ytableau} \mbox{\tiny $i$} &\mbox{\tiny $i+1$} \end{ytableau} \qquad \textrm{   and   } \qquad  \begin{ytableau} \mbox{\tiny $i$} \\ \mbox{\tiny $i+1$}  \end{ytableau}.   $$
\end{proof}

\subsection{Corresponding results for degenerate affine Hecke algebras (\cite{GNP})}

Irreducible representations for the (degenrate) affine Hecke algebra $\dot{H}_n$ are parametrized by finite Cherednik diagrams in a directly analogous way. To a finite Chrednik diagram $D_{fin}$ consisting of $n$ boxes we associate a character $\chi=\chi_{D}$ as the content of the row reading tableaux on $D_{fin}$. The induced module  $M_D^{\textrm{aff}}=M_\chi^{\textrm{aff}}$ is isomorphic to $\CC[W_n]$. Its quotient by $\Phi_i$, for $s_i$ row preserving simple reflections, has a unique an irreducible quotient $L_{D}^{\textrm{aff}}=L_{\chi}^{\textrm{aff}}$. Let $w_0$ be the longest element of $W_n$. 

\begin{theorem}{\cite{GNP}}\label{MaxThm}
For every finite Chrednik diagram $D_{fin}$ there exists a nonzero homomorphism $L_\chi^{\textrm{aff}}\hookrightarrow M_{w_0\chi}^{\textrm{aff}}$.
\end{theorem}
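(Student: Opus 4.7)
My plan is to construct a nonzero $\dot{H}_n$-homomorphism $F:M_\chi^{aff}\to M_{w_0\chi}^{aff}$ by specifying a weight-$\chi$ vector in the target, show that $F$ factors through the quotient $M_\chi^{aff}\twoheadrightarrow L_\chi^{aff}$, and conclude by simplicity of $L_\chi^{aff}$ that the induced map is the desired embedding $L_\chi^{aff}\hookrightarrow M_{w_0\chi}^{aff}$.

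Because $w_0\cdot w_0\chi=\chi$, the intertwining operators from Section~\ref{Intertwiners} suggest using $\Phi_{w_0}\mathbf{1}_{w_0\chi}$, which has weight $\chi$. However, a reduced-word expansion of $\Phi_{w_0}$ contains factors $(u_i-u_{i+1})^{-1}$ that may be singular on $\mathbf{1}_{w_0\chi}$, so one should use the rescaled intertwiners $\Psi_i$ instead. Even $\Psi_w$ is only directly defined when every intermediate weight difference avoids $\{0,\pm 1\}$, so the cleanest construction is a limit argument: take an algebraic family of characters $\chi^{(t)}$ specialising to $\chi$ at $t=0$, define $v^{(t)}:=\Psi_{w_0}\mathbf{1}_{w_0\chi^{(t)}}$ for generic $t$ (braid invariance of the $\Psi_i$ making this independent of the reduced expression), and verify that $v^{(t)}$ extends holomorphically to $t=0$; set $v:=v^{(0)}$. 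Define $F$ as the unique homomorphism with $F(\mathbf{1}_\chi)=v$.

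To show $F$ factors through $L_\chi^{aff}$, it suffices to check that $\Phi_i v=0$ for each row-preserving $s_i$, i.e.\ for $i$ with $\chi_i-\chi_{i+1}=-1$. Since $l(s_iw_0)<l(w_0)$, choose a reduced expression $w_0=s_iw_0'$ so that $\Psi_{w_0}=\Psi_i\Psi_{w_0'}$. The vector $\Psi_{w_0'}\mathbf{1}_{w_0\chi^{(t)}}$ lies in the weight space indexed by $\eta^{(t)}:=s_i\chi^{(t)}$, and a direct computation using $\Phi_i^2=1-(u_i-u_{i+1})^{-2}$ shows that $\Phi_i\Psi_i$ acts on that weight space as the scalar $(\eta^{(t)}_i-\eta^{(t)}_{i+1}-1)/(\eta^{(t)}_i-\eta^{(t)}_{i+1})$. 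At $t=0$ one has $\eta_i-\eta_{i+1}=\chi_{i+1}-\chi_i=1$, so the numerator vanishes and $\Phi_i v=0$, as required.

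Finally, for nonvanishing of $v$, one expands $v^{(t)}$ in the basis $\{w\mathbf{1}_{w_0\chi^{(t)}}\}_{w\in W_n}$ and checks that the Bruhat-leading coefficient (corresponding to $w_0$) has a finite nonzero limit as $t\to 0$: any potentially vanishing rescaling factor is cancelled by a compensating pole of an intermediate $(u_i-u_{i+1})^{-1}$, leaving a well-defined nonzero rational function of $\chi$. The main obstacle is precisely this combinatorial bookkeeping -- tracking which content differences $\pm 1$ occur at which stage of each reduced word for $w_0$, so as to simultaneously verify the vanishing in the previous paragraph and the nonvanishing here. This is the technical heart of \cite{GNP}, and it is the part that must be redone with a different Weyl element $\widetilde{w}$ in place of $w_0$ when one passes to the double affine setting in Section~4 of the present paper.
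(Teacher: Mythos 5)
Your construction of the candidate vector $v$ via limits of rescaled intertwiners matches the fusion-procedure route that \cite{GNP} (and this paper, in the double affine setting) uses, and the leading-coefficient nonvanishing check at the end is the right idea. However, there is a genuine gap at the step where you claim that verifying $\Phi_i v=0$ for each row-preserving $s_i$ (those with $\chi_i-\chi_{i+1}=-1$) suffices to show $F$ factors through $L_\chi^{aff}$. That condition only guarantees that $F$ factors through the \emph{small Verma module} $N_\chi^{aff}$, whose kernel inside $M_\chi^{aff}$ is generated by the $\Phi_i\mathbf{1}_\chi$ with $i$ row-preserving. In general $N_\chi^{aff}\to L_\chi^{aff}$ has nontrivial kernel, so additional relations must be killed. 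This is precisely where the technical weight of the theorem sits: as described in Section~2.9 of the paper, \cite{GNP} handle the passage from $N_\chi^{aff}$ to $L_\chi^{aff}$ by applying a functor from affine Hecke algebra representations to a category of quantum group representations and invoking known facts about the corresponding morphisms there. The present paper, in its double affine analogue for \emph{semisimple} modules, instead uses the explicit description of $\Ker Q$ from Theorem~\ref{defss}(3), namely that it is generated by all $\Phi_i\Phi_w\mathbf{1}_D$ with $wT_0$ standard and $s_iwT_0$ not standard; killing the additional (column) generators requires the substantial Lemma~\ref{column}, which has no counterpart in your outline. Without one of these two mechanisms, your argument only yields a nonzero map $N_\chi^{aff}\to M_{w_0\chi}^{aff}$, which need not have irreducible image.

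A second, smaller issue: in the step where you compute $\Phi_i v^{(t)}=\bigl((\eta^{(t)}_i-\eta^{(t)}_{i+1}-1)/(\eta^{(t)}_i-\eta^{(t)}_{i+1})\bigr)\Psi_{w_0'}\mathbf{1}_{w_0\chi^{(t)}}$ and conclude vanishing because the scalar tends to $0$, you implicitly assume that $\Psi_{w_0'}\mathbf{1}_{w_0\chi^{(t)}}$ stays bounded as $t\to 0$. If that vector blows up at the same rate the scalar vanishes, the product need not tend to $0$. This is part of the same fusion bookkeeping you flag at the end, but it needs to be done \emph{before} one can even assert the small-Verma factorization, not only for the nonvanishing of $v$.
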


If $\chi_1,\ldots, \chi_n$ are all distinct, then $\Phi_{w_0}\One_{w_0\chi}$ is an eigenvector in $M_{w_0\chi}^{\textrm{aff}}$ with eigenvalue $w_0^2\chi=\chi$, and $\One_{\chi} \mapsto \Phi_{w_0}\One_{w_0\chi}$ determines a homomorphism $M_\chi^{\textrm{aff}}\hookrightarrow M_{w_0\chi}^{\textrm{aff}}$. If some $\chi_i=\chi_j$ for $i\ne j$, then at least one factor of  $\Phi_{w_0}$ has a pole, and $\Phi_{w_0}\One$ is not well defined. The proof replaces $\Phi_{w_0}$ with the limit at $z=\chi_D$ of an appropriate $\CC[W_n]$-valued rational function of $z$, and uses the fusion procedure to show that this function (restricted to an appropriate subset of $\CC^n$) is regular at $z=\chi_D$. This procedure is combinatorial in nature, and relies on the detailed combinatorial study of the possible reduced decompositions of the longest elements $w_0$ of the symmetric group. The goal is to produces an eigenvector $E$ in $M_{w_0\chi}^{\textrm{aff}}$ with the eigenvalue $\chi_D$, which induces a homomorphism $M_\chi^{\textrm{aff}}\hookrightarrow M_{w_0\chi}^{\textrm{aff}}$. To prove that this homomorphism factors through the surjection $M_\chi^{\textrm{aff}}\to L_\chi^{\textrm{aff}}$, the authors apply a functor from representations of the affine Hecke algebra to a certain category of representations of a quantum group, and use know results about corresponding morphisms in that category.

\subsection{The main result}
We now state and prove the corresponding result about inclusions of irreducible modules into Verma modules for double affine Hecke algebras. From now on, we will be concerned with periodic skew Young diagrams and semisimple irreducible representations.

By Theorem \ref{MaxThm}, every irreducible category $\mathcal{O}$ module for degenerate affine Hecke algebras can be realised as a submodule of a Verma module. For double affine Hecke algebras, the answer is more complicated, and there are irreducible modules for which such an inclusion does not exist.  These modules are ``too small", in the following sense: Verma modules are induced, isomorphic as $\CC[\W]$ modules to $\CC[\W],$ and are in particular free $\CC[x_1^{\pm1},\ldots, x_n^{\pm1}]$ modules of rank $n!$. Any submodule of a Verma module is therefore free of $\CC[x_1^{\pm1},\ldots, x_n^{\pm1}]$ torsion. So, any irreducible module which has $\CC[x_1^{\pm1},\ldots, x_n^{\pm1}]$ torsion can not be embedded into a Verma module. We will describe such modules in terms of periodic skew Young diagrams and find their torsion. For all other modules, we will describe a Verma module they inject to and find an explicit embedding. 

\submodules*

We proceed with the proof in three steps. In Section \ref{nocando}, we show how, for $\kappa\ge 2$, an infinite column for $D$ prevents $L_D$ from being embedded into a Verma module (Proposition \ref{one-way}). In Section \ref{yescando}, for $\kappa\ge 2$ and $D$ a diagram with no infinite column, we define an affine Weyl group element $\w$ and a character $\w^{-1}\chi_D$, construct a map of Verma modules $M_D\to M_{\w^{-1}\chi_D}$, and prove it factors through the quotient map $M_D\to L_D$, giving an embedding $L_D\hookrightarrow M_{\w^{-1}\chi_D}$ (Proposition \ref{two-way}, stated at the beginning and proved at the end of Section \ref{yescando}). Finally, in Section \ref{kappa1} we resolve the $\kappa=1$ case, constructing an explicit embedding of every irreducible module into a Verma module in that case (Proposition \ref{thirdcase}).

\section{Irreducible modules which can not be realised as submodules of Verma modules}\label{nocando}
In this section, we prove one part of the main result. Namely, we show:
\begin{proposition}\label{one-way}
Let $D$ be a periodic skew diagram of degree $n$ and period $\left(m,-l\right)$, $\kappa=m+l\ge 2$, and assume that $D$ has an infinite column (there exists $b\in \ZZ$ such that $\left(a,b\right) \in D$ for infinitely many values of $a\in \ZZ$). Then the corresponding irreducible module $L_D$ for the degenerate double affine Hecke algebra $\H$ cannot be embedded into a Verma module $M_\tau$ for any character $\tau$ of $\CC[u_1,\ldots, u_n]$.
\end{proposition}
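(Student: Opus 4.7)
The plan is to prove the non-embedding via a torsion dichotomy: every submodule of a Verma module is torsion-free over $\CC[x_1^{\pm 1}, \ldots, x_n^{\pm 1}]$, while $L_D$ carries nontrivial torsion over that ring whenever $D$ has an infinite column. The first half is a structural observation. By construction $M_\mu = \Ind_{\CC[\u]}^{\H}\CC_\mu$ is canonically isomorphic to $\CC[\W]$ as a left $\CC[\W]$-module, and $\CC[\W] \cong \CC[x_1^{\pm 1}, \ldots, x_n^{\pm 1}] \rtimes \CC[W_n]$ is free of rank $n!$ as a left $\CC[x_1^{\pm 1}, \ldots, x_n^{\pm 1}]$-module. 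Since $\CC[x_1^{\pm 1}, \ldots, x_n^{\pm 1}]$ is an integral domain, any submodule of $M_\mu$ is torsion-free over it, so an embedding $L_D \hookrightarrow M_\mu$ would force $L_D$ itself to be $\CC[x_1^{\pm 1}, \ldots, x_n^{\pm 1}]$-torsion-free.

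For the production of torsion in $L_D$, I first translate the hypothesis: ``$D$ has an infinite column'' is equivalent to $l = 0$ (so $\kappa = m \ge 2$), because the periodicity orbit $\{(a + km, b - kl) : k \in \ZZ\}$ of any box $(a, b) \in D$ stays entirely in column $b$ iff $l = 0$, while otherwise each orbit meets any given column at most once, forcing every column to contain at most $n$ boxes. Assuming now $l = 0$, my aim is to exhibit a standard tableau $T$ on $D$ and a nonzero $p \in \CC[x_1^{\pm 1}, \ldots, x_n^{\pm 1}]$ with $p \cdot v_T = 0$ in $L_D$. The combinatorial lever is that whenever $i$ and $i+1$ lie in the same column of $T$, Lemma \ref{adjacent} together with Theorem \ref{defss}(1) force $s_i v_T = -v_T$, since $C_T(i) - C_T(i+1) = 1$ and $s_i T$ is non-standard. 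Substituting these scalar actions into reduced expressions for the $x_j$'s in terms of $\pi^{\pm 1}$ and $s_0, \ldots, s_{n-1}$ should simplify $x_j v_T$ substantially, and arranging for two distinct $j$'s to yield the same simplified vector will give a nontrivial relation of the form $(x_i - x_j) v_T = 0$.

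The extreme single-column case ($n = m$, $D = \{(a, 1) : a \in \ZZ\}$) illustrates the mechanism transparently: every standard tableau $T$ stacks successive integers vertically, so every $s_k$ (including $s_0$) acts as $-1$ on every basis vector $v_T$, and a short direct computation with $x_1 = \pi s_{n-1} \cdots s_1$ and $x_{i+1} = s_i x_i s_i$ gives $x_1 v_T = x_2 v_T = \cdots = x_n v_T = (-1)^{n-1} v_{\pi T}$, so $(x_i - x_j)$ annihilates the whole of $L_D$ for any $i \ne j$. The main obstacle will be extending this argument to a general $l = 0$ diagram with several columns, where only some reflections $s_k$ act as scalars on a given $v_T$ while others contribute genuine intertwiner terms involving $v_{s_k T}$. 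The task then reduces to choosing $T$ (and reduced expressions for the $x_j$'s) so that the non-scalar intertwiner contributions to $x_i v_T$ and $x_j v_T$ cancel for suitable $i \ne j$, leaving a genuine polynomial in the $x_j$'s that annihilates $v_T$.
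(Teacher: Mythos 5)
Your overall plan coincides with the paper's: torsion-freeness of submodules of Verma modules (the paper's Lemma~\ref{canthavenotorsion}), plus production of $\CC[x_1^{\pm1},\ldots,x_n^{\pm1}]$-torsion in $L_D$. Your reduction to $l=0$ matches the paper's Lemma~\ref{sswinfcolumn}, and your single-column computation ($n=m$, $k=1$) is correct and agrees with the paper.

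However, there is a genuine gap: the general multi-column case is not proven — it is only announced as ``the main obstacle,'' and the remainder of your third paragraph is a plan, not an argument. This is precisely the combinatorial core of the paper's proof (Lemma~\ref{hastorsion}) and by far its longest and most delicate part. Worse, the specific shape of torsion you predict — a relation of the form $(x_i-x_j)v_T=0$ — does not hold when the diagram has $k>1$ columns. What actually works is a relation between \emph{row sums}: with $n=mk$ and $T_0$ the row-reading tableau, the paper proves
\begin{equation*}
(x_1+\cdots+x_k)\,v_{T_0}=(x_{k+1}+\cdots+x_{2k})\,v_{T_0}=\cdots=(x_{(m-1)k+1}+\cdots+x_{mk})\,v_{T_0},
\end{equation*}
and it is the difference of two such row sums (nonzero since $m=\kappa\ge 2$) that annihilates $v_{T_0}$. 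For $k=1$ this collapses to your $x_1v_T=x_2v_T$, which is why your special case works, but the heuristic ``arrange for two distinct $j$'s to yield the same simplified vector'' does not generalize: for $k>1$ no single $x_i v_{T_0}$ equals another $x_j v_{T_0}$, and one must sum over a full row and exploit the commuting ``block'' operators $X_i=1+s_{ik}+s_{ik+1}s_{ik}+\cdots$ to see the telescoping $\pi X_0X_1\cdots X_{m-1}v_{T_0}$ on both sides. Without discovering that you must sum within a row rather than compare individual $x_i$'s, and without the explicit intertwiner computation showing $-X_{i+1}v_{T_0}=s_{(i+2)k-1}\cdots s_{(i+1)k}v_{T_0}$, the proof is not complete.
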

This is proved through a series of lemmas. The assumption $\kappa\ge 2$ is used to find torsion in Lemma \ref{hastorsion}.

\begin{lemma}\label{sswinfcolumn}
If $D$ is a periodic skew diagram of degree $n$ and period $\left(m,-l\right)$ with an infinite column, then $D$ consists of $k$ consecutive infinite columns. In other words, $l=0$, $n=mk$ for some $k\in \NN$, $\kappa=m$, and there exists $\mu\in \ZZ$, $\lambda=\mu+k-1$ such that $$D=\{ \left(a,b\right) | a\in \ZZ, \mu\le b\le \lambda \}=\ZZ\times [\mu, \lambda].$$
\end{lemma}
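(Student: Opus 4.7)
The plan is straightforward combinatorial bookkeeping using the defining conditions (iv') of a periodic skew diagram together with the periodicity (iii). The key observation is that the condition $\mu_{a+1}\le\mu_a$ forces $\mu_a$ to be a weakly decreasing function of $a\in\ZZ$, while periodicity $\mu_{a+m}=\mu_a-l$ (and similarly for $\lambda_a$) constrains how much it can decrease over one period.

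First I would settle the value of $l$. Since $l\ge 0$ (as noted in the excerpt), either $l>0$ or $l=0$. If $l>0$, then $\mu_{a+km}=\mu_a-kl\to -\infty$ as $k\to\infty$ and $\lambda_{a+km}=\lambda_a-kl\to+\infty$ as $k\to-\infty$. Because $\mu_a$ and $\lambda_a$ are monotone in $a$, this means for every fixed column $b\in\ZZ$, there are only finitely many $a$ with $\mu_a\le b\le\lambda_a$, i.e.\ every column of $D$ is finite. This contradicts the hypothesis that $D$ has an infinite column, so $l=0$ and hence $\kappa=m+l=m$.

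Next, with $l=0$ the sequence $\mu_a$ satisfies $\mu_{a+m}=\mu_a$, so it is $m$-periodic; combined with $\mu_{a+1}\le\mu_a$ this forces $\mu_a$ to be constant: setting $\mu:=\mu_1$, we get $\mu=\mu_1\ge\mu_2\ge\cdots\ge\mu_{m+1}=\mu_1=\mu$, so all inequalities are equalities and $\mu_a=\mu$ for every $a\in\ZZ$. The identical argument applied to $\lambda_a$ produces a constant $\lambda$ with $\lambda_a=\lambda$ for every $a$. In particular the infinite column hypothesis forces $\lambda\ge\mu$, and every row of $D$ is the same segment $[\mu,\lambda]$, so $D=\ZZ\times[\mu,\lambda]$.

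Finally I would compute the numerics. Condition (ii) gives
\[
n=\sum_{a=1}^{m}(\lambda_a-\mu_a+1)=m(\lambda-\mu+1),
\]
so setting $k:=\lambda-\mu+1\in\NN$ yields $n=mk$ and $\lambda=\mu+k-1$, exactly as claimed. There is no real obstacle here; the only thing to be careful with is to note that the periodicity shift $\mu_{a+m}=\mu_a-l$ (which follows from $D+(m,-l)=D$) is what couples monotonicity to $l$, and it is precisely this coupling that both forces $l=0$ and then forces $\mu_a,\lambda_a$ to be constant.
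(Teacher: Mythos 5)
Your proof is correct and follows essentially the same structure as the paper's: first force $l=0$, then use (iv') with $l=0$ to make $\mu_a$ and $\lambda_a$ constant, then compute $n=mk$ from condition (ii). The one small difference is in how $l=0$ is obtained: the paper argues by pigeonhole purely from periodicity (the fundamental domain sits in a bounded strip $[1,m]\times[\mu,\lambda]$, and an infinite column would need $lr$ to lie in a bounded interval for infinitely many $r$), whereas you invoke the monotonicity $\mu_{a+1}\le\mu_a$, $\lambda_{a+1}\le\lambda_a$ from the skew condition to show each column is a finite interval in $a$. Both are valid; the paper's version is marginally more economical in that it doesn't use (iv') for this step and so would apply to any periodic Cherednik diagram, not just skew ones.
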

\begin{proof}  
Let us first show that if $D$ has an infinite column, then $l=0$. Setting $\mu=\min_{i\in [1,m]} \mu_i$, $\lambda=\max_{i\in [1,m]} \lambda_i$, we see that $D\subseteq [1,m]\times [\mu,\lambda] +\ZZ \left(m,-l\right).$ Assume the column $b$ is infinite, then $\left(a,b\right)\in D$ for all $a\in \mathbb{Z}$. 

For any $r\in \mathbb{Z}$, pick $a\in \mathbb{Z}$ such that  $1+mr \le a\le m+mr$. As $\left(a,b\right)\in D$, it follows that $\left(a,b\right)\in [1,m]\times [\mu,\lambda] +r\left(m,-l\right)$. In particular, $\mu-lr \le b \le \lambda-lr$, and so $\mu-b\le lr\le \lambda -b$ for all $r$. This is only possible if $l=0$. 

Condition (iv') in Definition \ref{defdiagram} now reads: 
$$\mu_1\ge \mu_2\ge \cdots \ge \mu_m\ge \mu_{m+1}=\mu_1-l=\mu_1$$
$$\lambda_1\ge \lambda_2\ge \cdots \ge \lambda_m\ge \lambda_{m+1}=\lambda_1-l=\lambda_1$$
so $\mu_i=\mu$, $\lambda_i=\lambda$ for all $i$, and $D=\ZZ\times [\mu, \lambda]$. Setting $k=\lambda-\mu+1$, we see that the first $m$ rows contain $n=mk$ boxes. 
\end{proof}

\begin{lemma}\label{hastorsion}
Consider the periodic skew diagram $D=\ZZ\times [\mu,\lambda]$ consisting of $k$ infinite columns, with $n=mk$, $l=0$, $\kappa=m$, $k\ge 1$, and $\mu,\lambda\in \ZZ$ with $\lambda=\mu+k-1$. In the associated irreducible representation $L_D$ of $\H$,
$$\left(x_1+x_2+\cdots +x_k\right)v_{T_0}=\left(x_{k+1}+x_{k+2}+\cdots +x_{2k}\right)v_{T_0}=$$
$$=\cdots = \left(x_{\left(m-1\right)k+1}+x_{\left(m-1\right)k+2}+\cdots +x_{mk}\right)v_{T_0}.$$
\end{lemma}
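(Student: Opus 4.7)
The proof is a direct calculation using the explicit formulas of Theorem~\ref{defss}(1). Two structural facts drive it: (i) $v_{T_0}$ is invariant under the parabolic $W_I$ of row-preserving reflections -- for any $i'$ internal to a row of $T_0$ the swap $s_{i'}T_0$ is non-standard and $C_{T_0}(i')-C_{T_0}(i'+1)=-1$, whence $s_{i'}v_{T_0}=v_{T_0}$; and (ii) when $k\ge 2$, the row-boundary reflections $s_0,\,s_k,\,s_{2k},\dots,\,s_{(m-1)k}$ pairwise commute in $\dot W_n$, because their indices differ by multiples of $k\ge 2$.

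Starting from $x_1=\pi s_{n-1}\cdots s_1$ and iterating $x_{i+1}=s_ix_is_i$ one obtains the closed formula $x_i=(s_{i-1}\cdots s_1)\,\pi\,(s_{n-1}\cdots s_i)$ in $\CC[\dot W_n]$. For $i=(j-1)k+r\in$ row $j$ the trailing block $s_{jk-1}\cdots s_i$ lies in $W_I$ and acts trivially on $v_{T_0}$, so
\[
 x_iv_{T_0} \;=\; (s_{i-1}\cdots s_1)\,\pi\,(s_{n-1}\cdots s_{jk})\,v_{T_0}.
\]
Summing over $r=1,\dots,k$ produces a closed expression for $X_jv_{T_0}$, whose only $r$-dependence is in the left factor $(s_{i-1}\cdots s_1)$.

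In the degenerate case $k=1$ the group $W_I$ is trivial, but now every swap $s_iT_0$ (for every $i\in\ZZ$) is non-standard with content difference $+1$, so Theorem~\ref{defss}(1) gives $s_iv_{T_0}=-v_{T_0}$ uniformly. Consequently $x_iv_{T_0}=(-1)^{n-1}v_{\pi T_0}$ independently of $i$, and the conclusion of the lemma is immediate.

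For $k\ge 2$, one expands the closed form of $X_jv_{T_0}$ by repeated application of Theorem~\ref{defss}(1). All internal-row reflections encountered simplify thanks to $W_I$-invariance or to explicit content factors; the only reflections producing new basis vectors are row-boundary reflections, so the expansion is supported on vectors of the form $v_{\pi\sigma T_0}$ with $\sigma$ a product of $s_0,s_k,\dots,s_{(m-1)k}$. Pairwise commutativity of the boundary reflections means each such $v_{\pi\sigma T_0}$ is well defined by the \emph{subset} of boundary indices in $\sigma$. The main obstacle is the combinatorial verification that, subset by subset, the coefficient of $v_{\pi\sigma T_0}$ in $X_jv_{T_0}$ coincides with its coefficient in $X_{j+1}v_{T_0}$; I would carry this out by matching corresponding summands via the cyclic-shift identity $\pi^kX_j\pi^{-k}=X_{j+1}$ together with the commutativity of boundary reflections, which together force the $j$-dependence in the coefficients to cancel.
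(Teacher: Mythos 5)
Your reduction of $x_iv_{T_0}$ to $(s_{i-1}\cdots s_1)\pi(s_{n-1}\cdots s_{jk})v_{T_0}$ and the $k=1$ case both match the paper's opening step, but the argument for $k\ge 2$ rests on a structural claim that is false and, even if repaired, is left unproved.

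The claim that ``the only reflections producing new basis vectors are row-boundary reflections,'' so that the expansion of $X_jv_{T_0}$ is supported on $v_{\pi\sigma T_0}$ with $\sigma$ a product of $s_0,s_k,\dots,s_{(m-1)k}$, fails for $k\ge 3$. The $W_I$-invariance of $v_{T_0}$ controls the internal reflections only in the first application; once a boundary reflection such as $s_k$ has acted, the subsequent internal reflections are applied to $s_kv_{T_0}$ rather than to $v_{T_0}$, and they do create new basis vectors. Concretely, $s_{k+1}s_kT_0$ is standard for $k\ge 3$ (after the first swap, the entries $k+1$ and $k+2$ occupy the incomparable boxes $(1,\lambda)$ and $(2,\mu+1)$), so expanding $s_{2k-1}\cdots s_kv_{T_0}$ via Theorem~\ref{defss}(1) produces $v_{s_{k+1}s_kT_0}=\Psi_{k+1}\Psi_kv_{T_0}$; indeed the paper's own computation yields
\[
s_{2k-1}\cdots s_kv_{T_0}=\bigl(-\Psi_{2k-2}\cdots\Psi_k-\tfrac12\Psi_{2k-3}\cdots\Psi_k-\cdots-\tfrac1{k-1}\Psi_k-\tfrac1k\bigr)v_{T_0}.
\]
Since $w\mapsto wT_0$ is injective and $s_{k+1}s_k$ is not a word in the commuting boundary reflections, $v_{s_{k+1}s_kT_0}$ is not of the asserted form.

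Beyond this, the last paragraph announces the coefficient matching rather than performing it; that verification is the entire content of the lemma. The proposed tool $\pi^kX_j\pi^{-k}=X_{j+1}$ is a correct identity in $\CC[\dot W_n]$, but applied to vectors it gives $X_{j+1}v_{T_0}=\pi^kX_jv_{\pi^{-k}T_0}$, and $v_{\pi^{-k}T_0}\ne v_{T_0}$, so it does not by itself transport the expansion from one row to the next. The paper closes this gap differently: it introduces $X_i=1+s_{ik}+s_{ik+1}s_{ik}+\cdots+s_{ik+k-2}\cdots s_{ik}\in\CC[W_n]$, proves by an explicit intertwiner computation that $s_{jk+k-1}\cdots s_{jk}v_{T_0}$ equals $-X_jv_{T_0}$, iterates across rows to rewrite $(x_{(j-1)k+1}+\cdots+x_{jk})v_{T_0}$ as $\pi$ times a product of all the $X_i$ applied to $v_{T_0}$, and then uses that the $X_i$ pairwise commute in $\CC[W_n]$ (a consequence of the far-commutation relations, not merely of commuting boundary reflections) to conclude.
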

\begin{proof}

Define the following temporary notation: 
$$X_i= 1+s_{ik}+s_{ik+1}s_{ik}+\cdots +s_{ik+k-2}\ldots s_{ik}.$$
Because of the convention $s_{i+n}=s_i$ and $n=mk$, we have $X_{i+m}=X_i$. Furthermore, 
$X_i$ and $s_j$ commute unless $j$ is between $ik-1$ and $ik+k-1$ $\pmod{ n}$, and $X_iX_j=X_jX_i$ for all $i,j$. We will show that 
\begin{equation}
\label{torsion1}
\left(x_1+x_2+\ldots +x_k\right)v_{T_0} =\pi X_0 X_1\ldots X_{m-1} v_{T_0}.
\end{equation}

Using the explicit formulas from Theorem \ref{defss}, and the fact that the first row of $T_0$ looks like $\raisebox{-2pt}{\young(123\hfill \cdots \hfill k)}$, we see that $s_1,s_2,\ldots s_{k-1}$ all act on $v_{T_0}\in L_D$ as $1$. From this, using that $x_i=\pi  s_{i-2}\ldots s_0 s_{n-1}\ldots s_i,$ it follows that 
\begin{align*}
x_1v_{T_0}&=\pi s_{n-1}s_{n-2}\ldots s_k v_{T_0},\\ 
x_2v_{T_0}&=\pi s_0 s_{n-1}s_{n-2}\ldots s_k v_{T_0},\\ 
&\vdots& \\
x_kv_{T_0}&=\pi s_{k-2}\ldots s_0 s_{n-1}\ldots s_k v_{T_0}.
\end{align*}
Summing these, we get that
\begin{equation}\label{torsion2} 
\left(x_1+\ldots +x_k\right)v_{T_0} = \pi X_0 s_{n-1}s_{n-2}\ldots s_k v_{T_0}.
\end{equation}

Let us now calculate $s_{2k-1}\ldots s_{k}v_{T_0}$. In what follows, we use the convention that all products have decreasing indices, and that the product over the empty set is $1$. The first two rows of $T_0$ are 
$$T_0=
{\Large \begin{ytableau}
{\scriptstyle 1} & \scriptstyle 2 &\scriptstyle 3 & \none[\dots] & \scriptstyle k - 1 &\scriptstyle k \\
\scriptstyle k+1 & \scriptstyle k+2 & \scriptstyle k+3 & \none[\dots] & \scriptstyle {2k - 1} &\scriptstyle 2k
\end{ytableau} },
$$
and we see that the tableaux $s_kT_0$, $s_{k+1}s_kT_0$, \ldots , $s_{2k-2}\ldots s_kT_0$ are standard, while $s_{2k-1}\ldots s_kT_0$ is not. Using explicit formulas from Theorem \ref{defss}, we can prove by induction on $i$ that for all $i=0,\ldots , k-2$, 
$$s_{k+i}  \ldots s_kv_{T_0}=\left(\prod_{a=0}^{i}\Psi_{k+a}-\sum_{j=0}^{i} \frac{1}{k-j} \prod_{a=0}^{j-1}\Psi_{k+a} \right) v_{T_0}.$$
From this we get
\begin{align*}
s_{2k-1}  \ldots s_kv_{T_0}&=\left(-\prod_{a=0}^{k-2}\Psi_{k+a}-\sum_{j=0}^{k-2} \frac{1}{k-j} \prod_{a=0}^{j-1}\Psi_{k+a} \right) v_{T_0}\\
&= \sum_{j=0}^{k-1} \frac{-1}{k-j} \prod_{a=0}^{j-1}\Psi_{k+a}   \,\, v_{T_0}\\
&= \sum_{j=0}^{k-1} \left(-1+\sum_{i=j+1}^{k-1} \frac{1}{k-j}  \right)\prod_{a=0}^{j-1}\Psi_{k+a}   \,\, v_{T_0}\\
&= \left(-\sum_{i=0}^{k-1} \prod_{a=0}^{i-1}\Psi_{k+a} +\sum_{i=1}^{k-1}\sum_{j=0}^{i-1} \frac{1}{k-j}  \prod_{a=0}^{j-1}\Psi_{k+a} \right)  v_{T_0}\\
&= \sum_{i=0}^{k-1} \left(-\prod_{a=0}^{i-1}\Psi_{k+a} + \sum_{j=0}^{i-1} \frac{1}{k-j}  \prod_{a=0}^{j-1}\Psi_{k+a} \right)  v_{T_0}\\
&= \sum_{i=0}^{k-1} \left(-\prod_{a=0}^{i-1}s_{k+a} \right)  v_{T_0}\\
&= -X_1 v_{T_0}.
\end{align*}

Going back to (\ref{torsion2}), we can conclude that: 
\begin{align*}
\left(x_1+\cdots +x_k\right)v_{T_0} &= \pi X_0 s_{n-1}s_{n-2}\ldots s_k v_{T_0} \\
 &= \pi X_0 s_{n-1}s_{n-2}\ldots s_{2k}X_1 v_{T_0} \\
 &= \pi X_0 X_1  s_{n-1}s_{n-2}\ldots s_{2k}v_{T_0} \\
& \cdots \\
&= \pi X_0 X_1\ldots X_{m-1}v_{T_0}.
\end{align*}

An analogous calculation then shows that
\begin{align*}
\left(x_{k+1}+x_{k+2}+\cdots +x_{2k}\right)v_{T_0}&=\pi X_1 X_2\ldots X_{m-1} X_{m} v_{T_0} \\
&=\pi X_1 X_2\ldots X_{m-1}X_{0} v_{T_0} \\
&= \pi X_0 X_1\ldots X_{m-2} X_{m-1} v_{T_0},
\end{align*}
thus proving that $\left(x_1+x_2+\cdots +x_k\right)v_{T_0}=\left(x_{k+1}+x_{k+2}+\cdots +x_{2k}\right)v_{T_0} $. 
\end{proof}

\begin{example}
A very similar statement should hold for periodic Cherednik diagrams. Here are some examples: 

\begin{enumerate}
\item 
$\young(12,:3)$, $\left(m,-l)=(2,0\right)$:
$x_1+x_2=x_3=\pi \left(-\Phi_0-\frac{1}{2}\right)$

\item 
$\young(1,23)$, $\left(m,-l\right)=\left(2,0\right)$: 
$x_1^{-1}=x_2^{-1}+x_3^{-1}=\pi^{-1}\left(-\Phi_0-\frac{1}{2}\right)$

\item 
$\young(1,234)$, $(m,-l)=(2,0)$:
: $x_1^{-1}=x_2^{-1}+x_3^{-1}+x_4^{-1}=\pi ^{-1}\left(-\Phi_3\Phi_0-\frac{1}{2}\Phi_0-\frac{1}{3}\right)$

\item 
$\young(123,:45,::6)$, $\left(m,-l\right)=\left(3,0\right)$: $x_1+x_2+x_3=x_4+x_5=x_6=\left(\Phi_3+\frac{1}{2}\right)\left(\Phi_1\Phi_0+\frac{1}{2}\Phi_0+\frac{1}{3}\right)$

\item 
$\young(1,:2)$, $\left(m,-l\right)=\left(2,0\right)$: $L_D=M_D$.

\end{enumerate}
\end{example}

\begin{lemma}\label{canthavenotorsion}
If an irreducible module $L_D$ can be embedded into a Verma module $M_\tau$, then the subalgebra $\CC[x_1^{\pm 1},\ldots , x_n^{\pm 1}]$ of $\H$ acts on the cyclic vector $v_{T_0} \in L_D$ without torsion. 
\end{lemma}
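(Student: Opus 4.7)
The plan is to exploit the fact that a Verma module $M_\tau$, being an induced module, is free over $\CC[x_1^{\pm 1},\ldots ,x_n^{\pm 1}]$, so any submodule is torsion free as a $\CC[x_1^{\pm 1},\ldots ,x_n^{\pm 1}]$-module. The embedding $L_D\hookrightarrow M_\tau$ then transports this torsion freeness back to $L_D$, and in particular to the cyclic vector $v_{T_0}$.

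Concretely, first I would recall from Section~\ref{defML} that, by definition,
\[
M_\tau=\Ind_{\CC[u_1,\ldots ,u_n]}^{\ddot{H}_n(\kappa)}\CC_\tau,
\]
and that the PBW-type isomorphism $\CC[x_1^{\pm},\ldots ,x_n^{\pm}]\otimes \CC[W_n]\otimes \CC[u_1,\ldots ,u_n]\to \H$ identifies $M_\tau$, as a module over the subalgebra $\CC[\W]\cong \CC[W_n]\ltimes \CC[x_1^{\pm},\ldots ,x_n^{\pm}]$, with $\CC[\W]$ itself. Since $W_n$ is finite, $\CC[\W]$ is a free $\CC[x_1^{\pm},\ldots ,x_n^{\pm}]$-module of rank $n!$, with basis $\{w\mathbf{1}_\tau\mid w\in W_n\}$.

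Second, I would observe that free modules are torsion free, so $M_\tau$ has no $\CC[x_1^{\pm},\ldots ,x_n^{\pm}]$-torsion, and neither does any $\CC[x_1^{\pm},\ldots ,x_n^{\pm}]$-submodule. If $\iota\colon L_D\hookrightarrow M_\tau$ is an $\H$-module embedding, then in particular $\iota$ is a $\CC[x_1^{\pm},\ldots ,x_n^{\pm}]$-module embedding, so $L_D$ is isomorphic to a $\CC[x_1^{\pm},\ldots ,x_n^{\pm}]$-submodule of the torsion-free module $M_\tau$, hence torsion free over $\CC[x_1^{\pm},\ldots ,x_n^{\pm}]$.

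Finally, I would conclude that the cyclic vector $v_{T_0}\in L_D$ is sent by $\iota$ to a nonzero element of $M_\tau$; its annihilator in $\CC[x_1^{\pm},\ldots ,x_n^{\pm}]$ equals the annihilator of $v_{T_0}$ (because $\iota$ is injective and $\CC[x_1^{\pm},\ldots ,x_n^{\pm}]$-linear), and by the torsion-freeness of $M_\tau$ this annihilator is zero. Thus $\CC[x_1^{\pm},\ldots ,x_n^{\pm}]$ acts torsion freely on $v_{T_0}$. There is no real obstacle here: once the induced-module structure of $M_\tau$ is written down, the statement is essentially a formal consequence of the PBW decomposition of $\H$ and the finiteness of $W_n$.
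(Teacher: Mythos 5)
Your argument is correct and matches the paper's proof: both identify $M_\tau$ as a $\CC[\dot{W}_n]$-module with $\CC[\dot{W}_n]\cong\CC[W_n]\ltimes\CC[x_1^{\pm1},\ldots,x_n^{\pm1}]$, hence a free $\CC[x_1^{\pm1},\ldots,x_n^{\pm1}]$-module of rank $n!$, and then observe that a submodule of a free module over this domain is torsion free. The extra remarks about annihilators are just a restatement of the same conclusion.
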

\begin{proof}
As $\CC[\W]$-modules, Verma modules are isomorphic to $\CC[\W]\cong \CC[W_n]\ltimes \CC[x_1^{\pm 1},\ldots , x_n^{\pm 1}]$. In particular, they are free $\CC[x_1^{\pm 1},\ldots , x_n^{\pm 1}]$-modules of rank $|W_n|=n!$. The action of $\CC[x_1^{\pm 1},\ldots , x_n^{\pm 1}]$ on the cyclic vector $v_{T_0}$ of $L_D$ produces a $\CC[x_1^{\pm 1},\ldots , x_n^{\pm 1}]$-submodule of $L_D$, so we have a sequence of inclusions of $\CC[x_1^{\pm 1},\ldots , x_n^{\pm 1}]$-modules $$\CC[x_1^{\pm 1},\ldots , x_n^{\pm 1}]v_{0}\hookrightarrow L_D \hookrightarrow M_\tau\cong \CC[W_n]\ltimes \CC[x_1^{\pm 1},\ldots, x_n^{\pm 1}] .$$
Thus, $\CC[x_1^{\pm 1},\ldots , x_n^{\pm 1}]v_{T_0}$ is a submodule of a free $\CC[x_1^{\pm 1},\ldots , x_n^{\pm 1}]$-module, so it is free of torsion. 
\end{proof}

\begin{proof}[Proof of Proposition \ref{one-way}]
If $D$ is a periodic skew diagram with an infinite column, then by Lemma ~\ref{sswinfcolumn} it consists of $k$ consecutive infinite columns, and $n=mk$. By Lemma ~\ref{hastorsion}, $\sum_{i=1}^k x_{i}$ and $\sum_{i=1}^k x_{k+i}$ act the same on the cyclic vector $v_{T_0}$. If $m=\kappa \ge 2$, then $\sum_{i=1}^k x_{i}\ne \sum_{i=1}^k x_{k+i}$, so $v_{T_0}$ is a torsion element of $L_D$. By Lemma ~\ref{canthavenotorsion}, $L_D$ cannot be embedded into a Verma module. 

This is the only place where the assumption $\kappa\ge 2$ was used. 
\end{proof}

\section{Irreducible modules which can be realised as submodules of Verma modules, $\kappa \ge 2$ case}\label{yescando}

In this section we prove another part of the main theorem:
\begin{proposition}\label{two-way}
Let $D$ be a periodic skew diagram of degree $n$ and period $\left(m,-l\right)$, $\kappa=m+l\ge 2$, and assume that $D$ has no infinite column. Then the corresponding irreducible module $L_D$ for the degenerate double affine Hecke algebra $\H$ can be embedded into a Verma module.
\end{proposition}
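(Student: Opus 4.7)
The overall strategy mirrors the affine case of \cite{GNP}, but replaces the longest element $w_{0}$ with an element $\w\in\W$ tailored to the periodic skew diagram $D$. Since $\W$ is infinite there is no canonical ``longest element'', so the first task is to isolate the correct $\w$. The guiding idea is that $\w T_{0}$ should be the standard tableau obtained from $T_{0}$ by reversing the reading order within each infinite row of $D$ (this is where the hypothesis ``no infinite column'' is used: every row has finite length, so this reversal makes sense while the analogous column-reversal would not). Concretely, one defines $\w$ so that $\w^{-1}\chi_{D}$ is the content of this reversed tableau, and so that there is a distinguished reduced decomposition $\w=\pi^{r}s_{i_{1}}\cdots s_{i_{N}}$ whose partial products realize a maximal increasing chain of standard tableaux between $T_{0}$ and $\w T_{0}$.

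Next, following the fusion philosophy of \cite{GNP}, I would introduce the rational $\CC[\W]$-valued function
\[
\varphi_{\w}(\z)\;=\;\varphi_{i_{1}}(s_{i_{2}}\cdots s_{i_{N}}z)\,\varphi_{i_{2}}(s_{i_{3}}\cdots s_{i_{N}}z)\cdots\varphi_{i_{N}}(z)\cdot\pi^{r},
\]
where $\varphi_{i}(z)=s_{i}+\tfrac{1}{z_{i}-z_{i+1}}$, together with a suitable regularizing prefactor $\delta_{D}(z)$ designed to clear the zeros arising from pairs $T_{0}(a,b),T_{0}(a',b')$ that belong to the same infinite row of $D$ (the analogue here of ``parallel rows'' in the finite case). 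The map $F:M_{D}\to M_{\mu}$ with $\mu=\w^{-1}\chi_{D}$ will be defined by $\mathbf{1}_{D}\mapsto E$, where $E$ is the limit as $z\to\w^{-1}\chi_{D}$ of $(\delta_{D}\cdot\varphi_{\w})(z)\cdot\mathbf{1}_{\mu}$, restricted to an appropriate affine subspace of $\CC^{n}$.

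The first genuine difficulty is \emph{regularity}: one must show that each apparent pole $z_{i}-z_{i+1}=0$ in the product is cancelled either by a zero of $\delta_{D}$ or by an opposite-sign factor produced by a neighbouring intertwiner. This is the combinatorial heart of the argument and, as the introduction signals, it depends on choosing the reduced decomposition of $\w$ so that adjacent occurrences of $s_{i},s_{i+1},s_{i}$ (or $s_{0}$) are arranged to trigger braid-style cancellations in exactly the positions where $\chi_{D}$ forces $\xi_{i}=\xi_{i+1}$. The analysis is parallel to \cite{GNP} but the bookkeeping is more delicate because $\w$ involves nontrivial powers of $\pi$ and occurrences of $s_{0}$; once the combinatorics of the chosen reduced expression for $\w$ is set up, the verification will be essentially identical to the affine fusion argument, plus a check that $E\ne 0$ at the limit (which amounts to exhibiting the leading term $\Psi_{\w}\mathbf{1}_{\mu}$, a nonzero eigenvector of $\u$ of eigenvalue $\chi_{D}$ in $M_{\mu}$).

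The second task is to show that $F$ factors through the quotient $Q:M_{D}\twoheadrightarrow L_{D}$. By Theorem~\ref{SVss}(3), $\ker Q$ is generated by the elements $\Phi_{i}\Phi_{w}\mathbf{1}_{D}$ with $wT_{0}$ standard but $s_{i}wT_{0}$ not standard. Using Lemma~\ref{adjacent}, in such a situation $i$ and $i{+}1$ are adjacent in the same row or same column of $wT_{0}$. The plan is to verify on each such generator that $F(\Phi_{i}\Phi_{w}\mathbf{1}_{D})=0$ in $M_{\mu}$ by tracking $\Phi_{i}\Phi_{w}E$ through the intertwiner combinatorics: the key observation is that $\Psi_{i}\Psi_{w}\Psi_{\w}$ involves a subword with two consecutive occurrences of $s_{j}$ (arising from a length-drop in $s_{i}w\w$), which by $\Psi_{j}^{2}=1$ and the form of $\Phi_{j}$ produces an explicit zero in the limit. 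This is precisely where the semisimple hypothesis enters: I can replace the quantum-group functor argument of \cite{GNP} by the combinatorial bookkeeping of tableaux from \cite{SV} reviewed in Section~\ref{resultsSV}. The main obstacle of the whole proof is choosing $\w$ and a reduced word for $\w$ so that both the regularity computation and the factorization-through-$L_{D}$ computation work simultaneously; once the right $\w$ is in hand, the rest is a careful but routine extension of the affine arguments.
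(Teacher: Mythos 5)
Your overall strategy matches the paper's: construct an element $\w$ adapted to $D$, use the fusion procedure on $\varphi_{\w}$ to produce an eigenvector in $M_{\w^{-1}\chi_D}$, and then verify the resulting map kills the generators $\Phi_i\Phi_w\One_D$ of $\ker Q$ coming from Theorem~\ref{SVss}(3) and Lemma~\ref{adjacent}. But several of the details to which you commit are wrong or omit the hard part.

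First, your $\w$ is not the right one, and your justification for invoking the ``no infinite column'' hypothesis does not hold up. You propose $\w T_0$ as the tableau obtained by ``reversing the reading order within each infinite row'' and argue this requires rows to have finite length. In a periodic Cherednik diagram rows are \emph{always} finite segments; that has nothing to do with the no-infinite-column hypothesis. The paper's $\w$ is instead defined by reading $T_0$ \emph{up each column} and then moving to the next column to the left (Section~\ref{constructw}), and it is precisely the finiteness of columns — i.e.\ the no-infinite-column hypothesis — that makes this reading terminate and $\mathcal{I}_{\w}$ a finite set (Lemma~\ref{Iw}). Your row-reversal $\w$ produces a much smaller inversion set (only roots internal to a row), and there is no reason the associated eigenvector in $M_{\w^{-1}\chi_D}$ should generate a copy of $L_D$: it is the double affine analogue of taking a short coset representative rather than of $w_0$.

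Second, the regularizing prefactor $\delta_D$ is unnecessary here. Since you are in the semisimple/skew-diagram case, the analogue of $\delta_D$ is trivial (as $\cite{GNP}$ themselves note for skew shapes); what replaces it in the paper is the restriction of $\varphi_{\w}(\w^{-1}z)$ to the affine subspace $\mathcal{F}_D$ together with the fusion reordering of the special convex order on $\mathcal{I}_{\w}$ (Lemmas~\ref{special}, \ref{31}, \ref{32}, Proposition~\ref{regular}). Your plan mentions the restriction but also introduces $\delta_D$, which signals that the pole/zero bookkeeping has not been worked out.

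Third, and most seriously, your factorization argument only covers the easy half. When $i$ and $i{+}1$ sit in the same row of $wT_0$, the vanishing is the immediate $(s_i-1)(s_i+1)=0$ you describe (Lemma~\ref{row}). When they sit in the same \emph{column}, the repeated root does not cancel to zero outright: the paired factors produce $\frac{(2+\varepsilon)\varepsilon}{(1+\varepsilon)^2}$, whose limit vanishes, but only after you show the remaining (now unmatched, de-fused) product stays regular. Making that work requires reordering, re-matching, and iterating a ``cancellation at column $c$'' procedure across all the columns to the right of the offending pair (Lemma~\ref{column}, Steps 4--8). This is the genuine combinatorial core of the proof and it is not a ``careful but routine extension'' of the affine argument; the phrase ``two consecutive occurrences of $s_j$ \ldots by $\Psi_j^2=1$ produces an explicit zero'' papers over the entire difficulty of the column case.
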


For this whole section, let $D$ be a fixed periodic skew diagram satisfying the assumptions of the proposition. 

\subsection{Construction of a permutation $\w$}\label{constructw}
Let $\left(a_1,b_1\right)\in D$ be an arbitrary box, and let us start by assigning a permutation $\w_{\left(a_1,b_1\right)}$ of $\ZZ$ to this choice. Informally, it is given by reading the numbers in the tableau $T_0$ on $D$, starting at $1\mapsto T_0\left(a_1,b_1\right)$, reading up each column until its end, and then continuing up the next column to the left. More formally, $\w_{\left(a_1,b_1\right)}$ is the unique permutation of $\mathbb{Z}$ satisfying the following properties: 
\begin{enumerate}
\item $\w_{\left(a_1,b_1\right)}(1)=T_{0}\left(a_1,b_1\right)$;
\item if $\w_{\left(a_1,b_1\right)}(i)=T_0\left(a,b\right)$, and the box $\left(a-1,b\right)$ immediately above $\left(a,b\right)$ is in $D$, then $\w_{\left(a_1,b_1\right)}\left(i+1\right)=T_0\left(a-1,b\right)$;
\item if $\w_{\left(a_1,b_1\right)}(i)=T_0\left(a,b\right)$ and the box $\left(a-1,b\right)$ is not in $D$, then $i+1$ maps to the bottom entry of the next nonempty column to the left of column $b$.
\end{enumerate}

\begin{example}\label{example4}
Let $D$ be the periodic tableau with the fundamental domain $\raisebox{-5pt}{\young(12,34)}$ and the period $\left(2,-1\right)$, placed so that $1=T_0\left(1,1\right)$. The row reading tableau on it looks like 
$$\begin{ytableau}
\none[\vdots] \\
\none & \none &-3&-2\\
\none & \none & -1&0\\
\none & 1& 2\\
\none & 3&4 \\
5&6\\
7&8\\
\none[\vdots] \\
\end{ytableau} $$
Examples of the permutations defined above are
$$\w_{\left(4,1\right)}=\left( \begin{array}{cccccccccccccc}
\ldots &  -3&-2&-1&0& 1 & 2 & 3 & 4 &5&6&7&8 &\ldots\\
    &4 & 2 & -1 & -3 & 8 & 6 & 3 & 1& 12 &10  & 7&5&
  \end{array} \right) $$
$$\w_{\left(2,2\right)}=\left( \begin{array}{cccccccccccccc}
\ldots &  -3&-2&-1&0& 1 & 2 & 3 & 4 &5&6&7&8 &\ldots\\
&  0&-2& -5& -7&4 & 2 & -1 & -3 & 8 & 6 & 3 & 1&
  \end{array} \right).$$  
\end{example}

\begin{lemma}\label{defw}
\begin{enumerate}
\item For every choice of $\left(a_1,b_1\right)\in D$, the permutation $\w_{\left(a_1,b_1\right)}$ of $\ZZ$ satisfies $\w_{\left(a_1,b_1\right)}	\left(i+n\right)=\w_{\left(a_1,b_1\right)}(i)+n$.
\item The permutation $\w_{\left(a_1,b_1\right)}$ is given by the action from Section \ref{notation} of a unique element of $\W$. We call this element $\w_{\left(a_1,b_1\right)}$ as well. 
\item For any $\left(a_1,b_1\right), \left(a_2,b_2\right) \in D$, permutations $\w_{\left(a_1,b_1\right)}$ and $\w_{\left(a_2,b_2\right)}$ differ by right multiplication by some power of $\pi\in \W$.
\item There is a unique choice of $\left(a_1,b_1\right)\in D$ such that $\w_{\left(a_1,b_1\right)}\in \W$ is in a subgroup $\W^0$ generated by $s_0,\ldots s_{n-1}$ (its reduced decomposition does not contain a power of $\pi$). For such a choice, let $\w=\w_{\left(a_1,b_1\right)}$.
\end{enumerate}
\end{lemma}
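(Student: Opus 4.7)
The plan is to encode the reading procedure as a bijection $\sigma\colon D\to D$ sending each box to the next one read, so that $\w_{(a_1,b_1)}(i)=T_{0}(\sigma^{i-1}(a_1,b_1))$ for every $i\in\ZZ$. The two rules defining $\sigma$---go up one row if possible, else jump to the bottom of the next nonempty column to the left---depend only on the shape of $D$, which is invariant under translation by $(m,-l)$; hence $\sigma$ commutes with this translation and descends to a permutation $\bar\sigma$ of the $n$-element quotient $D/\langle (m,-l)\rangle$. Because $D$ has no infinite column we have $l\ge 1$ and every column of $D$ is finite; moreover the number of boxes in column $b$ depends only on $b\bmod l$, and any $l$ consecutive columns contain together exactly $n$ boxes (one per orbit). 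Starting at $(a_1,b_1)$, the reading therefore sweeps through exactly $l$ columns in $n$ steps, ending precisely at $(a_1,b_1)+(m,-l)$, so $\sigma^{n}$ equals the $(m,-l)$-translation on $D$. Combined with the defining periodicity $T_{0}((a,b)+(m,-l))=T_{0}(a,b)+n$ this gives $\w_{(a_1,b_1)}(i+n)=\w_{(a_1,b_1)}(i)+n$, proving~(1).

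Part~(2) is then immediate: the action of $\W$ on $\ZZ$ recalled in Section~\ref{notation} identifies $\W$ with the group of bijections $w\colon\ZZ\to\ZZ$ satisfying $w(i+n)=w(i)+n$, so the permutation produced in~(1) corresponds to a unique element of $\W$. For~(3), if $(a_2,b_2)=\sigma^{k-1}(a_1,b_1)$ is the box read at position $k$ when starting from $(a_1,b_1)$, then for every $i$
\[
\w_{(a_2,b_2)}(i)=T_{0}\bigl(\sigma^{i-1}(a_2,b_2)\bigr)=T_{0}\bigl(\sigma^{i+k-2}(a_1,b_1)\bigr)=\w_{(a_1,b_1)}(i+k-1).
\]
Since $\pi$ acts on $\ZZ$ by $j\mapsto j+1$, this identity reads $\w_{(a_2,b_2)}=\w_{(a_1,b_1)}\pi^{k-1}$ in $\W$.

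For~(4), $\W^{0}$ is a normal subgroup of $\W$ with $\W/\W^{0}\cong\ZZ$ generated by the image of $\pi$; equivalently, every $w\in\W$ admits a unique factorization $w=\pi^{r}w'$ with $w'\in\W^{0}$, and $\langle\pi\rangle\cap\W^{0}=\{1\}$. Fix any reference box $(a_{0},b_{0})\in D$ and write $\w_{(a_{0},b_{0})}=\pi^{r}w'$. By~(3), as the starting box varies over $D$ the elements $\w_{(a_2,b_2)}$ sweep out the right coset $\w_{(a_{0},b_{0})}\langle\pi\rangle$, with the bijection between starting boxes and shifts $k-1$ determined by the reading position. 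Setting $k=1-r$ produces a starting box for which $\w_{(a_2,b_2)}\in\W^{0}$, proving existence; uniqueness follows because any two such choices would differ by an element of $\langle\pi\rangle\cap\W^{0}=\{1\}$, forcing the corresponding $k$'s to agree.

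The main obstacle is the column-counting step in part~(1), namely verifying that the net displacement after exactly $n$ reading steps is the single period $(m,-l)$ rather than some larger integer multiple. Once this is secured, parts~(2)--(4) follow by formal group-theoretic considerations.
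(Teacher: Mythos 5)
Your proposal is correct and follows essentially the same approach as the paper's proof: part (1) via the observation that $l$ consecutive columns form a fundamental domain with $n$ boxes so the reading advances by exactly one period after $n$ steps, part (2) from the identification of $\W$ with the group of $n$-periodic bijections of $\ZZ$, part (3) by the shift-of-starting-point observation, and part (4) by the normal subgroup $\W^0 \trianglelefteq \W$ with quotient $\ZZ$ generated by $\pi$ (the paper phrases this via the coordinate-sum invariant $\sum_j w(j)$, which is the same map). The only stylistic difference is that you package the reading rule as a successor bijection $\sigma$ commuting with the period translation, which is a slightly cleaner way to organize the same counting argument.
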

\begin{proof}
\begin{enumerate}
\item  The period of the diagram $D$ is $\left(m,-l\right)$, and the tableau $T_0$  is a bijection $T_0:D\to \ZZ$ satisfying $T_0\left(a+m,b-l\right)=T_0\left(a,b\right)+n$. Thus, the union of any $l$ consecutive columns is a fundamental domain for $D$, and the set of values of $T_0$ on the boxes in these $l$ columns is a set of $n$ numbers, one from each congruence class of integers modulo $n$. When constructing $\w_{\left(a_1,b_1\right)}$ by reading the values of $T_0$ up each column, moving from right to left in columns, there are $n-1$ boxes to be read between $\w_{\left(a_1,b_1\right)}(i)=T_0\left(a,b\right)$ and $T_0\left(a+m,b-l\right)$. Thus, $$\w_{\left(a_1,b_1\right)}\left(i+n\right)=T_0\left(a+m,b-l\right)=T_0\left(a,b\right)+n=\w_{\left(a_1,b_1\right)}(i)+n.$$
\item From (1) it follows that $\w_{\left(a_1,b_1\right)}$ is completely determined by $\w_{\left(a_1,b_1\right)}(1), \ldots ,\w_{\left(a_1,b_1\right)}\left(n\right)$, which are integers from different congruence classes modulo $n$. Using the appropriate powers of $x_1,\ldots, x_n$, 
we can get a permutation $x_1^{k_1}\ldots , x_n^{k_n}\w_{\left(a_1,b_1\right)}$, which is also periodic and 
which maps $\{1,\ldots ,n \}$ to itself. This permutation is given by the action \ref{notation} an element $w$ of the symmetric group. Hence, $\w_{\left(a_1,b_1\right)}$ is a permutation of $\mathbb{Z}$ given by the action of the element $x_1^{-k_1}\ldots , x_n^{-k_n}w\in \W$. 
\item If $\w_{\left(a_1,b_1\right)}$ and $\w_{\left(a_2,b_2\right)}$ are both obtained in this way from a diagram $D$, for different choices of starting boxes, then they are two permutations of $\ZZ$ obtained by reading all the integers in $T_0$ in the same order, but with a different starting point $\w_{\left(a_1,b_1\right)}(1)\ne \w_{\left(a_2,b_2\right)}(1)$. Thus, one can be obtained from another by precomposing it with a shift of all integers by some fixed $k\in \ZZ$, $\w_{\left(a_2,b_2\right)}=\w_{\left(a_1,b_1\right)}\pi^k$.
\item The action of $\W$ is defined so that $\sum_{j=1}^n s_i\left(j\right)=\sum_{j=1}^n j$, for all $i=0,\ldots n-1,$ while $\sum_{j=1}^n\pi\left(j\right)=\sum_{j=1}^n\left(j+1\right).$ Choose some $\w_{\left(a_1,b_1\right)}$, and precompose it with the appropriate power of $\pi^k$ to get $\w_{\left(a_1,b_1\right)}\pi^k$ which satisfies $\sum_{j=1}^n  \w_{\left(a_1,b_1\right)}\pi^k\left(j\right)=\sum_{j=1}^n j$. This permutation is then in the subgroup $\W^0$, and it is of the form $\w_{\left(a_1,b_1\right)}\pi^k=\w_{\left(a_2,b_2\right)}$, for $\left(a_2,b_2\right)=T_0^{-1}\left(\w_{\left(a_1,b_1\right)}\left(k\right)\right)$.
\end{enumerate}
\end{proof}

From now on, for a given $D$, we will consider $\w\in \W^0$ as above. The choice of $\w$ over $\w\pi^k$ for some $k\in \ZZ$ will ease the notation in the proof, but is not significant. The modules $M_{\w^{-1}\chi_D}$ and $M_{\pi^{-k}\w^{-1} \chi_D}$ are isomorphic, with the isomorphism determined by $G\left(\One_{\w^{-1}\chi_D}\right)=\Phi_{\pi^{k}}\One_{\pi^{-k}\w^{-1}\chi_D}$. If $F:L_D\hookrightarrow M_{\w^{-1}\chi_D}$ is a homomorphism of modules, then so is $G\circ F: L_D\hookrightarrow M_{\pi^{-k}\w^{-1}\chi_D}$.

\subsection{Convex orders on roots and reduced decompositions}
We want to use the intertwiner $\Phi_{\w}$ to define an inclusion of an irreducible module into a Verma module, and this intertwiner is defined using a reduced decomposition of $\w$ into simple reflections. Some of its factors might have poles. To be able to calculate $\Phi_{\w}$ in those cases as well, we need to fix a particular reduced decomposition of $\w$, which will allow us to use the fusion procedure from \cite{GNP} to resolve the poles. In this subsection, we recall results about reduced decompositions in reflection groups.

Let $w$ be an arbitrary element of the group $\W^0$, and let $\mathcal{I}_w=\mathcal{R}_{w^{-1}}=\mathcal{R}_+\cap w\left(-\mathcal{R}_+\right)$ be the set of inversions of $w^{-1}$. Let $w=s_{i_1}\ldots s_{i_l}$ be a reduced decomposition of $w$. Consider the $l$ roots given by $\beta_k=s_{i_1}\ldots s_{i_{k-1}}\alpha_{i_k}$. Then $\mathcal{I}_w=\{ \beta_1, \ldots, \beta_l\}$, and in particular, the collection of $\beta_k$ does not depend on the decomposition. Setting $\beta_1<\beta_2<\ldots < \beta_l$ gives an order on $\mathcal{I}_w$ which does depend on the decomposition; in fact, it completely determines it. 

\begin{definition}[\cite{P1}]
A total order $<$ on $\mathcal{I}_w$ is called \emph{convex} if it satisfies: 
\begin{enumerate}
\item If $\alpha,\beta,\alpha+\beta, \in \mathcal{I}_w$, then $\alpha+\beta$ is between $\alpha$ and $\beta$; 
\item If $\alpha+\beta, \beta \in \mathcal{I}_w$ and $\alpha \in \mathcal{R}_+\setminus \mathcal{I}_w$, then $\beta< \alpha+\beta$.
\end{enumerate}
\end{definition}

\begin{lemma}[\cite{GP,P1}]\label{convex}
\begin{enumerate}
\item Associating a total order on $\mathcal{I}_w$ to every reduced decomposition of $w$ as above is a bijection between reduced decompositions of $w$ and convex orders on $\mathcal{I}_w$.
\item If $\alpha_{i,j-1},\alpha_{i,j},\alpha_{j-1,j}\in \mathcal{I}_w$, and $<$ is a convex order on it such that $\alpha_{i,j-1}<\alpha_{i,j}$ are adjacent, then there exists a convex order $<'$ on $\mathcal{I}_w$ such that $<$ and $<'$ are the same on all elements of $\mathcal{I}_w$ smaller than $\alpha_{i,j}$, and $\alpha_{i,j-1}<'\alpha_{i,j}<'\alpha_{j-1,j}$ are adjacent in $<'$. An analogous claim holds for $>$. 
\end{enumerate}
\end{lemma}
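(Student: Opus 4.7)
My plan is to follow the classical treatment of convex orderings on inversion sets, essentially due to Dyer \cite{D} and Papi \cite{P}.

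For part (1), I would establish the bijection in four steps. First, I would recall the standard fact that for a reduced decomposition $w = s_{i_1}\cdots s_{i_l}$, the sequence $\beta_k = s_{i_1}\cdots s_{i_{k-1}}\alpha_{i_k}$ consists of distinct positive roots exhausting $\mathcal{I}_w$. Second, I would verify convexity directly. For axiom (1), if $\beta_p+\beta_q = \beta_r$ with $p<q$, applying $(s_{i_1}\cdots s_{i_{p-1}})^{-1}$ reduces to the case where $\beta_p$ is simple, and a rank-two analysis in the subsystem spanned by $\beta_p$ and $\beta_q-\beta_p$ forces $p<r<q$. For axiom (2), the condition $\alpha\in\mathcal{R}_+\setminus\mathcal{I}_w$ means $w^{-1}\alpha$ stays positive, and a similar rank-two calculation (pulled back by an initial segment of the decomposition) gives $\beta<\alpha+\beta$. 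Third, injectivity follows by noting that a convex ordering forces $\beta_1$ to be simple: if $\beta_1 = \alpha+\beta$ with $\alpha,\beta\in\mathcal{R}_+$, then either $\alpha\in\mathcal{I}_w$, in which case axiom (1) and axiom (2) together with $\beta\in\mathcal{I}_w$ (forced if $\alpha\in\mathcal{I}_w$) produce an inversion strictly less than $\beta_1$, or $\alpha\notin\mathcal{I}_w$, in which case axiom (2) directly gives $\beta<\beta_1$; either contradicts minimality. Fourth, surjectivity uses the same observation: given any convex ordering, set $s_{i_1}$ to be the simple reflection for $\beta_1$, observe that $s_{i_1}$ acting on $\mathcal{I}_w\setminus\{\beta_1\}$ produces a convex ordering on $\mathcal{I}_{s_{i_1}w}$, and induct on $l(w)$.

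For part (2), the key observation is that the relation $\alpha_{i,j-1}+\alpha_{j-1,j}=\alpha_{i,j}$ combined with axiom (1) forces $\alpha_{i,j}$ to lie strictly between $\alpha_{i,j-1}$ and $\alpha_{j-1,j}$ in any convex order on $\mathcal{I}_w$ containing all three. Since $\alpha_{i,j-1}<\alpha_{i,j}$ are assumed adjacent, this leaves two possibilities: either $\alpha_{j-1,j}>\alpha_{i,j}$ (so $\alpha_{j-1,j}$ is already above $\alpha_{i,j}$, possibly with other roots in between) or $\alpha_{j-1,j}<\alpha_{i,j-1}$ (so $\alpha_{j-1,j}$ lies strictly below the adjacent pair). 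In the first case, I would construct $<'$ by moving $\alpha_{j-1,j}$ downward past each root $\gamma$ strictly between $\alpha_{i,j}$ and $\alpha_{j-1,j}$ using adjacent swaps; such a swap is legal whenever neither axiom (1) nor axiom (2) involves the triple $\{\gamma,\alpha_{j-1,j},\gamma+\alpha_{j-1,j}\}$ or $\{\gamma,\alpha_{j-1,j},\gamma-\alpha_{j-1,j}\}$ in an order-forcing way. For type $A$ affine roots $\epsilon_a-\epsilon_b$, sums of two roots are roots only when indices chain, so the potentially obstructing $\gamma$ have the form $\alpha_{p,j-1}$ or $\alpha_{j,q}$, and in each case the relation $\gamma+\alpha_{j-1,j}$ (or the corresponding difference) produces a root that, by axiom (1) applied to the original ordering, already lies above $\alpha_{j-1,j}$, so the swap is permitted. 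The second case is handled symmetrically, moving $\alpha_{j-1,j}$ upward past $\alpha_{i,j-1}$ and $\alpha_{i,j}$; the analogous statement for reversed inequality follows by the same argument applied to the opposite convex order.

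The main obstacle will be part (2), specifically a careful enumeration of which roots $\gamma$ in the ``gap'' can interact with $\alpha_{j-1,j}$ via the convexity axioms, and verifying that each adjacent swap along the way produces another convex order rather than violating axiom (2) with some $\alpha\in\mathcal{R}_+\setminus\mathcal{I}_w$. The explicit form of affine type $A$ roots makes this tractable, but the bookkeeping must track both the positive roots inside $\mathcal{I}_w$ and those outside it, since axiom (2) mixes the two.
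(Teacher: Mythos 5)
The paper does not prove Lemma \ref{convex} itself; it cites \cite{C,D,GP,P} and treats the statement as known. So there is no in-paper proof to compare against, and the question is whether your argument is sound.

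For part (1), your sketch follows the standard Papi/Dyer argument and is fine in outline. One small remark: for part (2) your ``second case'' $\alpha_{j-1,j} < \alpha_{i,j-1}$ is vacuous. Since $\alpha_{i,j-1} + \alpha_{j-1,j} = \alpha_{i,j}$ and all three lie in $\mathcal{I}_w$, axiom (1) forces $\alpha_{i,j}$ strictly between $\alpha_{i,j-1}$ and $\alpha_{j-1,j}$; combined with $\alpha_{i,j-1} < \alpha_{i,j}$ this forces $\alpha_{i,j} < \alpha_{j-1,j}$.

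The real gap is in your treatment of the remaining case of part (2). Your plan is to slide $\alpha_{j-1,j}$ downward past each $\gamma$ in the gap via adjacent transpositions, and you claim the only potentially obstructing $\gamma$ are of the form $\alpha_{p,j-1}$ or $\alpha_{j,q}$. That is the wrong list: those are the $\gamma$ for which $\gamma+\alpha_{j-1,j}$ is a root, and if that sum lay in $\mathcal{I}_w$ it would have to sit strictly between the adjacent pair $\gamma<\alpha_{j-1,j}$, which is impossible --- so this case never obstructs anything. The genuine obstruction is the opposite one: $\gamma$ of the form $\alpha_{j-1,q}$ with $q>j$, or $\alpha_{p,j}$ with $p<j-1$, for which $\delta := \gamma-\alpha_{j-1,j}\in\mathcal{I}_w$. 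Convexity of the original order forces $\delta<\gamma<\alpha_{j-1,j}$, i.e.\ $\delta$ sits \emph{below} $\gamma$, not above $\alpha_{j-1,j}$; after the proposed swap one has $\delta<\alpha_{j-1,j}<\gamma$, so $\gamma$ is no longer between $\delta$ and $\alpha_{j-1,j}$, and axiom (1) fails. This obstruction does actually occur. Take $w=w_0\in W_4$, $i=1$, $j=3$, and the convex order $\alpha_{12}<\alpha_{13}<\alpha_{14}<\alpha_{34}<\alpha_{24}<\alpha_{23}$ (corresponding to the reduced word $s_1s_2s_3s_2s_1s_2$). Here $\alpha_{12}<\alpha_{13}$ are adjacent and $\alpha_{23}$ is last; the element directly below it is $\gamma=\alpha_{24}$, and $\gamma-\alpha_{23}=\alpha_{34}\in\mathcal{I}_w$ sits below, so the swap breaks convexity. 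The conclusion of the lemma is still true (the convex order $\alpha_{12}<\alpha_{13}<\alpha_{23}<\alpha_{14}<\alpha_{24}<\alpha_{34}$, from $s_1s_2s_1s_3s_2s_1$, works), but you cannot reach it by adjacent transpositions alone: in reduced-word language, adjacent swaps correspond only to commutation moves, and here a braid move is also required.

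A correct and short argument goes through the reduced decomposition directly. Let $w=s_{i_1}\cdots s_{i_l}$ be the decomposition attached to $<$, with $\beta_k=\alpha_{i,j-1}$, $\beta_{k+1}=\alpha_{i,j}$, and set $w'' := s_{i_1}\cdots s_{i_{k-1}}$, $w':=s_{i_1}\cdots s_{i_{k+1}}$. A computation in $\epsilon$-coordinates shows $s_{i_k}=s_a$, $s_{i_{k+1}}=s_{a+1}$ for some $a$, with $w''(a)=i$, $w''(a+1)=j-1$, $w''(a+2)=j$ (this is the computation behind Lemma \ref{31}, and does not use Lemma \ref{convex}). Then $(w')^{-1}\alpha_{j-1,j}=s_{a+1}s_a(\epsilon_{a+1}-\epsilon_{a+2})=\alpha_a$ is simple, and since $\alpha_{j-1,j}\in\mathcal{I}_w$ one has $w^{-1}w'\alpha_a=w^{-1}\alpha_{j-1,j}<0$, so $\alpha_a\in\mathcal{I}_{(w')^{-1}w}$. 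Hence $(w')^{-1}w$ admits a reduced decomposition starting with $s_a$, and prefixing $w'$ yields a reduced decomposition of $w$ agreeing with the original in positions $1,\dots,k+1$ and with $s_a$ in position $k+2$, i.e.\ a convex order $<'$ with the required properties.
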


\subsection{A convex order on $\mathcal{I}_{\w}$ and a reduced decomposition of $\w$} \label{specialorder}
In this section, we fix a particular convex order on the set $\mathcal{I}_{\w}$ associated to the element $\w\in \W$  defined in Section \ref{constructw}. We will use the bijection between reduced decompositions of $\w$ and convex orders on $\mathcal{I}_{\w}$, and the fact that the set of inversions $\mathcal{I}_{\w}$ can be at once recovered from the form of $\w$ as a permutation of $\ZZ$.

To do that, we will represent the affine roots $\alpha_{ij}=\epsilon_i-\epsilon_j$ graphically as arrows on the row reading tableau $T_0$ on the periodic skew diagram $D$ we are working with. Draw $\alpha_{i,j}$ as an arrow 
$\overrightarrow{ij}$ on the diagram from the box $T_0^{-1}(i)$ to the box $T_0^{-1}\left(j\right)$. The roots satisfy $\alpha_{i,j}=\alpha_{i+n,j+n}$ and $T_0$ satisfies $T_0\left(a+m,b-l\right)=T_0\left(a,b\right)+n$, so any root can be represented in this way by countably many arrows, differing by shifts by a multiple of the period $\left(m,-l\right)$. If we fix the fundamental domain of $D$ consisting of the first $m$ rows, any such root can be represented uniquely by an arrow finishing in this fundamental domain. The only exception to this are the roots $k\mathbf{c}$, which can be written as $\epsilon_{i}-\epsilon_{i+kn}$ for any $i$, but they will not be relevant to us (in terms of the usual root theory for Kac-Moody algebras, this means considering only real roots).

The tableau $T_0$ is row reading, and the set of positive roots is $\{ \alpha_{i,j} | i<j\}$, so arrows $\overrightarrow{ij}$ associated to positive roots are:
\begin{enumerate} 
\item pointing right, if $i$ and $j$ are in the same row;
\item or pointing down, or down and left, or down and right, if $i$ is in some row above $j$.  
\end{enumerate}
In particular, because of $l>0$, the roots $k\mathbf{c}$ are represented by arrow pointing down and left.

\begin{lemma}\label{Iw}
The set $\mathcal{I}_{\w}$ is the finite set of all roots $\alpha_{ij}$ represented on $T_0$ by arrows pointing right, pointing down, and pointing down and right.  
\end{lemma}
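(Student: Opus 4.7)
The plan is to translate the definition of $\mathcal{I}_{\w}$ into a statement about the reading-order permutation $\w^{-1}$, and then read off the characterization geometrically. First, $\mathcal{I}_{\w} = \mathcal{R}_+ \cap \w(-\mathcal{R}_+)$, and since the $\W$-action on roots satisfies $w(\epsilon_i) = \epsilon_{w(i)}$, we have $\w^{-1}(\alpha_{ij}) = \alpha_{\w^{-1}(i),\,\w^{-1}(j)}$. Hence $\alpha_{ij} \in \mathcal{I}_{\w}$ if and only if $i < j$ and $\w^{-1}(i) > \w^{-1}(j)$, which reduces the problem to comparing reading positions.

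Next, I will exploit the explicit construction of $\w$ from Section \ref{constructw}: the permutation traverses $T_0$ by reading up each column and then jumping to the bottom of the next nonempty column to the left. Consequently, for any two boxes $P_1 = (a_1, b_1)$ and $P_2 = (a_2, b_2)$ of $D$, I expect to verify that $\w^{-1}(T_0(P_1)) > \w^{-1}(T_0(P_2))$ exactly when either $b_1 = b_2$ and $a_1 < a_2$ (same column, $P_1$ strictly higher), or $b_1 < b_2$ ($P_1$ strictly further to the left). The main technical point is that this monotonicity persists across all periodic copies, which is not immediate from the purely local recursion defining $\w$; I expect this to be the main obstacle. To handle it, I will use the equivariance $\w^{-1}(k+n) = \w^{-1}(k)+n$ to reduce to comparisons inside a single fundamental sweep, combined with the observation that the leftward sweep encounters the columns in strictly decreasing order.

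Finally, I must eliminate the arrow directions that cannot occur for positive roots. Using the periodicity $T_0(a+m, b-l) = T_0(a,b) + n$ together with the row-reading property of $T_0$ on the fundamental domain, a positive-root arrow $\overrightarrow{ij}$ (with $i<j$) cannot point strictly upward nor purely leftward along a single row; the only remaining directions are right, down-left, down, and down-right. Matching each case against the criterion of the previous step, right, down, and down-right all yield $\w^{-1}(i) > \w^{-1}(j)$ and therefore lie in $\mathcal{I}_{\w}$, whereas down-left fails this and is excluded. Finiteness of $\mathcal{I}_{\w}$ is automatic since $\w \in \W^0$ has finite length $\ell(\w) = |\mathcal{I}_{\w}|$; alternatively, the hypothesis $l \ge 1$ (forced by $D$ having no infinite column) implies that for each of the finitely many boxes $P_2$ in the fundamental domain, only finitely many boxes $P_1 \in D$ lie weakly above and weakly to the left of $P_2$, because the skew shape's columns shift strictly to the right as we move up through periodic copies.
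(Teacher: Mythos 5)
Your proposal is correct and follows essentially the same route as the paper's proof: both reduce $\alpha_{ij}\in\mathcal{I}_{\w}$ to the condition $\w^{-1}(i)>\w^{-1}(j)$ and then read off the geometric characterization from the order in which $\w$ traverses the boxes of $D$ (up each column, then to the next column on the left). The monotonicity concern you raise is not a real obstacle, since the sweep defining $\w$ always moves strictly up or strictly left, making the global ordering of reading positions immediate; your observation that finiteness follows from $\w\in\W^0$ having finite length (so $|\mathcal{I}_{\w}|=\ell(\w)<\infty$) is a small detail the paper leaves implicit.
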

\begin{proof}
A positive affine root $\alpha_{ij}$ with $i<j$ is in $\w(-\mathcal{R}_+)$ if and only if $\w^{-1}(i)>\w^{-1}\left(j\right)$. This means that, when reading off the integers for $\w$ up each column and taking columns in order from right to left, the integer $i$ is read after $j$. So either $i$ is in a column to the left of $j$ (so $\overrightarrow{ij}$ points right or right and down), or $i$ and $j$ are in the same column with $i$ above $j$ (so $\overrightarrow{ij}$ points down).
\end{proof}

Now let us fix a convex order on $\mathcal{I}_{\w}$. 
\begin{definition}\label{defspecial}
For vectors $\alpha_{i,j},\alpha_{p,q}\in \mathcal{I}_{\w}$ written so that $1\le j,q \le n$, define
$$\alpha_{i,j}<\alpha_{p,q} \,\,\,\, \textrm{if} \,\,\,\, \begin{array}{l} \w^{-1}(i)>\w^{-1}\left(p\right) \textrm{ or } \\ i=p \textrm{ and } j<q.\end{array} $$
We refer to this order as \emph{special}.
\end{definition} 

In words, we order the roots $\alpha_{ij}$ with a version of lexicographical order, first by the first index $i$, and then by the second index $j$, where the order on the second index is standard order on $\ZZ$, and order in the first index is the opposite of the way we calculate $\w$: we read the numbers down each column and taking columns in order from left to right. 

\begin{example}\label{example4c}\label{example4b}
Consider the diagram from example \ref{example4}, and 
$$\w=\w_{\left(0,2\right)}=\left( \begin{array}{cccccccccccccc}
\ldots &  -3&-2&-1&0& 1 & 2 & 3 & 4 &5&6&7&8 &\ldots\\
 & -5 & -7 &4 & 2 & -1 & -3 & 8 & 6 & 3 & 1& 12 & 10 &
  \end{array} \right).$$  
Then
$$\mathcal{I}_{\w}=\{\alpha_{1,2},\alpha_{1,3},\alpha_{1,4},\alpha_{-3,2},\alpha_{-3,4},\alpha_{3,4},\alpha_{-1,2},\alpha_{-1,4},\alpha_{2,4}\}.$$
The order on the first index is $1<3<-3<-1<2$, so the special order on $\mathcal{I}_{\w}$ is
$$\alpha_{12}<\alpha_{13}<\alpha_{14}<\alpha_{34}<\alpha_{-32}<\alpha_{-34}<\alpha_{-12}<\alpha_{-14}<\alpha_{24}.$$
\end{example}

\begin{lemma}\label{special}
The special order on $\mathcal{I}_{\w}$ from Definition \ref{defspecial} satisfies: 
\begin{enumerate}
\item If $\alpha,\beta,\alpha+\beta \in \mathcal{I}_{\w}$, then $\alpha+\beta$ is between $\alpha$ and $\beta$.
\item If $\alpha+\beta, \beta \in \mathcal{I}_{\w}$ and $\alpha \in \mathcal{R}_+\setminus \mathcal{I}_{\w}$, then $\beta< \alpha+\beta$.
\item If $i=T_0\left(a,b\right)$ and $j=T_0\left(a+c,b+c\right)$ are in different boxes with the same content and $i<j$, then the roots $\alpha_{i,j-1}$, $\alpha_{i,j}$ and $\alpha_{j-1,j}$ are in $\mathcal{I}_w$, and $\alpha_{i,j-1}<\alpha_{i,j}$ are adjacent in the special order.
\end{enumerate}
\end{lemma}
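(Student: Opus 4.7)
The plan is to verify each part by reducing to concrete inequalities between integers $\w^{-1}(\cdot)$ attached to first indices of roots. Two basic facts are used throughout: the equivariance $\w^{-1}(x+n)=\w^{-1}(x)+n$ from Lemma~\ref{defw}(1), and the reformulation of Lemma~\ref{Iw} that $\alpha_{p,q}\in\mathcal{I}_\w$ if and only if $\w^{-1}(p)>\w^{-1}(q)$, a condition which by the equivariance depends only on the root and not on the chosen integer representative. I would first observe that in type $\tilde{A}_{n-1}$, the only way for $\alpha+\beta$ to be a positive root with $\alpha,\beta$ positive is that there exist integers $i<j<k$ with, up to swapping $\alpha$ and $\beta$, $\alpha=\alpha_{i,j}$, $\beta=\alpha_{j,k}$, and $\alpha+\beta=\alpha_{i,k}$.

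For parts (1) and (2), I would pass to the standard representatives with second indices in $\{1,\dots,n\}$; writing $j_0,k_0\in\{1,\dots,n\}$ for the residues of $j,k$, the first indices shift by $\Delta j=j_0-j$ and $\Delta k=k_0-k$, each a multiple of $n$. The crucial observation is that $j<k$ forces $\Delta j\ge \Delta k$, since the residue intervals $[1+tn,n+tn]$ increase with $t$. For part~(1), transitivity of the three membership hypotheses gives $\w^{-1}(i)>\w^{-1}(j)>\w^{-1}(k)$; combining this with the shift bookkeeping and the definition of the special order yields $\alpha<\alpha+\beta<\beta$, with any tie on first indices broken by $j_0<k_0$. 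For part~(2), splitting into the subcases $\alpha=\alpha_{i,j}$ and $\alpha=\alpha_{j,k}$, the reversed inequality coming from $\alpha\in\mathcal{R}_+\setminus\mathcal{I}_\w$ produces in each case exactly the comparison needed to give $\beta<\alpha+\beta$.

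For part~(3), I would first note that $j-1=T_0(a+c,b+c-1)$ by the row-reading convention, and that $(a+c,b+c-1)\in D$ by the skew-diagram property recorded after Definition~\ref{defdiagram} applied to $(a,b)$ and $(a+c,b+c)$. I would then verify that all three claimed roots lie in $\mathcal{I}_\w$ by inspecting the order in which $\w$ visits boxes: columns are traversed right-to-left and each column bottom-up, which gives $\w^{-1}(i)>\w^{-1}(j-1)>\w^{-1}(j)$, with a short separate argument for the first inequality when $c=1$ puts $i$ and $j-1$ in the same column. Adjacency of $\alpha_{i,j-1}<\alpha_{i,j}$ in the special order is then immediate in the generic case: their standard representatives share a first index and have consecutive second indices in $\{1,\dots,n\}$, leaving no room for an intermediate root; the wraparound case $j\equiv 1\pmod{n}$ is handled by a direct check using the shift structure. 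The main obstacle throughout is the careful bookkeeping of representative shifts, which the monotonicity of the period function $t\mapsto \lfloor(x-1)/n\rfloor$ keeps manageable.
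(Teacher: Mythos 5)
Your plan is correct and essentially identical to the paper's proof: normalize each root so its second index lies in $\{1,\dots,n\}$, use the equivariance $\w^{-1}(x+n)=\w^{-1}(x)+n$ together with the criterion $\alpha_{p,q}\in\mathcal{I}_{\w}\Leftrightarrow\w^{-1}(p)>\w^{-1}(q)$, and split according to whether the middle index $j$ needs a nontrivial shift. One small inaccuracy to note when you fill in the details: in part (2), the subcase $\alpha=\alpha_{j,k}$ does not actually use the hypothesis $\alpha\notin\mathcal{I}_{\w}$ (the inequality $\beta<\alpha+\beta$ there comes purely from $j<k$ and the shift bookkeeping, exactly as for the $\alpha<\alpha+\beta$ half of part (1)), and in part (3) the ``wraparound'' case $j\equiv 1\pmod n$ turns out to be vacuous because $j-1$ and $j$ lie in the same row of $T_0$, which forces both into $\{1,\dots,n\}$ after normalization.
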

\begin{proof}
\begin{enumerate}
\item This statement is symmetric with respect to $\alpha$ and $\beta$, so we may assume that $\alpha=\alpha_{ij}$, $\beta=\alpha_{jk}$ and $\alpha+\beta=\alpha_{ik}$, for some $i<j<k \in \ZZ$. By using $\alpha_{pq}=\alpha_{p+n,q+n}$ we can also assume that $1\le k\le n$.

The root $\alpha=\alpha_{ij}$ is in $\mathcal{I}_{\w}$, which means that $\w^{-1}(i)>\w^{-1}\left(j\right)$, and consequently that $\alpha+\beta=\alpha_{ik}<\alpha_{jk}=\beta$ in the special order. 

If $j\ge 1$, then it also satisfies $1\le j\le n$ and the root $\alpha=\alpha_{ij}$ is written so that it ends in the correct fundamental domain. By the definition of special order and using $j<k$, we get $\alpha=\alpha_{ij}<\alpha_{ik}=\alpha+\beta$.

If $j<1$, then let $p\in \NN$ be such that $1\le j+pn\le n$, and the correct way to write $\alpha$ is as $\alpha_{i+pn,j+pn}$. It follows that
$\w^{-1}\left(i+pn\right)=\w^{-1}(i)+pn>\w^{-1}(i)$, so $\alpha=\alpha_{i+pn,j+pn}<\alpha_{i,k}=\alpha+\beta$. 

So, in either case, $\alpha<\alpha+\beta<\beta$ in the special order. 

\item This statement is not symmetric with respect to $\alpha$ and $\beta$, so let us consider two cases.

CASE 1. $\alpha=\alpha_{ij}$, $\beta=\alpha_{jk}$. We may assume $1\le k\le n$. As $\alpha \notin \mathcal{I}_{\w}$, it follows that $\w^{-1}(i)<\w^{-1}\left(j\right)$ and consequently $\alpha+\beta=\alpha_{ik}>\alpha_{jk}=\beta$ in the special order. 

CASE 2. $\alpha=\alpha_{jk}$, $\beta=\alpha_{ij}$, and we assume that  $1\le k\le n$. If $1\le j$ as well, then $j<k$ implies $\beta=\alpha_{ij}<\alpha_{ik}=\alpha+\beta$. If $j<1$, then for $p>0$ such that $1\le j+pn\le n$ we have $\w^{-1}\left(i+pn\right)=\w^{-1}(i)+pn>\w^{-1}(i)$ so $\beta=\alpha_{i+pn,j+pn}<\alpha_{ik}=\alpha+\beta$.

\item  If $i<j$ are in different boxes with the same content, then $i=T_0\left(a,b\right)$ and $j=T_0\left(a+c,b+c\right)$ for some $c>0$. As $D$ is a periodic skew diagram, the box $\left(a+c,b+c-1\right)$ is also in $D$, 
and $T_0\left(a+c,b+c-1\right)=j-1$. Assume without loss of generality that $1\le j\le n$. As $j-1$ is in the same row as $j$, it also satisfies $1\le j-1\le n$, so both $\alpha_{i,j-1}$ and $\alpha_{i,j}$ are correctly written positive roots. The arrow $\overrightarrow{j-1,j}$ is pointing right, so $\alpha_{j-1,j}\in \mathcal{I}_{\w}$. If $c>1$ then $j-1$ is to the right of $i$, and $\overrightarrow{i,j-1}$ is pointing down and right; if $c=1$ then $i$ and $j-1$ are in the same column $\overrightarrow{i,j-1}$ is pointing down. In either case, $\alpha_{i,j-1}\in \mathcal{I}_{\w}$. In the special order, $\alpha_{i,j-1}<\alpha_{i,j}$ are adjacent.
\end{enumerate}
\end{proof}

We conclude by Lemma \ref{special} that the  special order is convex, 
and it determines a reduced decomposition of $\w$ by Lemma \ref{convex} .

\begin{example}\label{example4d}
The special order from Example \ref{example4c} corresponds to the reduced decomposition
$$\w=s_1s_2s_3s_2s_0s_1s_3s_0s_3.$$
\end{example}

\begin{remark}
One can associate an $l$-tuple (ordered multiset) of roots to any decomposition of $w\in \W^0$ of length $l$ into simple reflections, using the same formulas for $\beta_i$. The decomposition is non-reduced if and only if there is a root $\beta$ such that both $\beta$ and $-\beta$ appear in the multiset. In this case, the product of simple reflections can be reordered so that the two instances of the repeating root are adjacent, and the corresponding factors cancel. 
\end{remark}

\begin{example}
Consider the non-reduced product $s_1s_2s_1s_2$. The roots associated to them are
$\beta_1=\alpha_{12}$, $\beta_2=s_1\left(\alpha_{23}\right)=\alpha_{13}$, $\beta_3=s_1s_2\left(\alpha_{12}\right)=\alpha_{23}$, $\beta_4=s_1s_2s_1\left(\alpha_{23}\right)=\alpha_{21}=-\alpha_{12}$. Rewriting the product as $s_2s_1s_2s_2$, we get the new order on the multiset $\alpha_{23}<\alpha_{13}<\alpha_{12}<\alpha_{21}$, and the last two reflections corresponding to the repeating root cancel, $s_2s_2=1$.
\end{example}

\subsection{Intertwining operators and a map of Verma modules}
In this section we construct a map of Verma modules $M_{\chi_D}\to M_{\w^{-1}\chi_D}$. The construction is motivated by the analogous result from \cite{GNP}, made slightly simpler by the fact we are dealing with semisimple modules, and slightly more complicated by the fact that we are dealing with degenerate double affine Hecke algebras. 

We first define several rational functions of complex variables $z=\left(z_i\right)_{i\in \ZZ}$ with values in the group algebra $\CC[\W]$. We allow infinitely many variables $z_i$, but only finitely many ever appear in any formula.
\begin{itemize}
\item Let $\varphi_i\left(z\right)=s_i+\frac{1}{z_i-z_{i+1}}$. If $V$ is an $\H$ module and $V[\xi]$ a $\left(\u\right)$ eigenspace such that $\xi_i\ne \xi_{i+1}$, then $\Phi_i|_{V[\xi]}=\varphi_i\left(\xi\right)$.
\item For a root $\alpha_{ij}$, let $\varphi_k^{ij}\left(z\right)=\varphi_k^{\alpha_{ij}}\left(z\right)=s_k+\frac{1}{z_i-z_{j}}$. In particular, $\varphi_i=\varphi_i^{\alpha_i}=\varphi_i^{i,i+1}$.
\item For $w\in \W$, let $^w\varphi_i\left(z\right)=\varphi_i\left(wz\right)=\varphi_i^{w^{-1}\alpha_i}\left(z\right)$.
\item For a product $s_{i_1}\ldots s_{i_l}$ (not necessarily reduced), let $$\varphi_{s_{i_1}\ldots s_{i_l}}\left(z\right)=\varphi_{i_1}\left(s_{i_2}\ldots s_{i_l} z\right)\ldots \varphi_{i_{l-1}}\left(s_{i_l}z\right) \varphi_{i_l}\left(z\right).$$
\item The following identities are easy to check: 
$$\textrm{if } s_is_j=s_js_i  \textrm{ then }  \varphi_{i}\left(s_jz\right)\varphi_j\left(z\right)=\varphi_{j}\left(s_iz\right)\varphi_i\left(z\right)$$
$$\varphi_{i}\left(s_{i+1}s_iz\right)\varphi_{i+1}\left(s_iz\right)\varphi_i\left(z\right)=\varphi_{i+1}\left(s_{i}s_{i+1}z\right)\varphi_{i}\left(s_{i+1}z\right)\varphi_{i+1}\left(z\right).$$
It follows that, for any reduced decomposition $w=s_{i_1}\ldots s_{i_l}$ of an element $w\in \W^0$, the function $\varphi_{s_{i_1}\ldots s_{i_l}}\left(z\right)$ depends only on $w$ and not on the choice of decomposition. Let $\varphi_w\left(z\right)=\varphi_{s_{i_1}\ldots s_{i_l}}\left(z\right)$.
\item Call a product $\varphi_{i_1}^{\beta_1}\ldots \varphi_{i_l}^{\beta_l}$ \emph{matched} if $\beta_j=\left(s_{i_j}\ldots s_{i_l}\right)^{-1}\left(\alpha_{i_j}\right)$. Matching products come from (possibly nonreduced) words in $\W^0$, $\left(\varphi_{i_1}^{\beta_1}\ldots \varphi_{i_l}^{\beta_l}\right)\left(z\right)=\varphi_{s_{i_1}\ldots s_{i_l}}\left(z\right)$. For example, $\varphi_1^{13}\varphi_2^{23}=\varphi_{s_1s_2}$ is matched, while $\varphi_1^{12}\varphi_2^{23}$ is not. 
\item If $\varphi_{i_1}^{\beta_1}\ldots \varphi_{i_l}^{\beta_l}$ is a matched product, then the two $l$-tuples $\left(i_1,\ldots, i_l\right)$ and $\left(\beta_1,\ldots , \beta_l\right)$ mutually determine each other. Sometimes we will use it to write   
$$\varphi_{i_1}^{\beta_1}\ldots \varphi_{i_l}^{\beta_l}=\varphi_{i_1}^{\bullet}\ldots \varphi_{i_l}^{\bullet}=\varphi^{\beta_1}\ldots \varphi^{\beta_l}.$$
\item The following identities hold if all the products are matched and $i,j,k,l$ are all distinct,
$$\varphi^{ij}\varphi^{ik}\varphi^{jk}=\varphi^{jk}\varphi^{ik}\varphi^{ij}$$
$$\varphi^{ij}\varphi^{kl}=\varphi^{kl}\varphi^{ij}.$$
\item If $\varphi_{s_{i_1}\ldots s_{i_k}}=\varphi_{i_1}^{\beta_1} \ldots \varphi_{i_k}^{\beta_k}$ is a matched product such that not all roots $\beta_j$ are distinct, then $s_{i_1}\ldots s_{i_k}$ is not a reduced word in $\W^0$. If $\beta_{j}=-\beta_{j'}$, $j<j'$, then the product $s_{i_1}\ldots s_{i_{j'-1}}$ can be rewritten as a product of simple reflections ending in $s_{j'}$, and the product $s_{i_{j+1}}\ldots s_{i_k}$ can be rewritten to start with $s_{i_j}$.
\item When this happens, two adjacent terms can be cancelled, producing a scalar function of $z$: 
$$\varphi_k^{ji}\left(z\right)\varphi_k^{ij}\left(z\right)=\left(s_k-\frac{1}{z_i-z_{j}}\right)\left(s_k+\frac{1}{z_i-z_{j}}\right)=1-\frac{1}{\left(z_i-z_{j}\right)^2}.$$
\end{itemize}

Another lemma gives us a shorter expression for the functions we will need later:
\begin{lemma}\label{varphiwasaproduct}
For a reduced expression $w=s_{i_1}\ldots s_{i_l}\in \W^0$, and $\beta_k=s_{i_1}\ldots s_{i_{k-1}}\left(\alpha_{i_k}\right)\in \mathcal{I}_w$ as above, 
$$\varphi_w\left(w^{-1}z\right)= \left( \varphi_{i_1}^{\beta_1}\varphi_{i_2}^{\beta_2} \ldots \varphi_{i_l}^{\beta_l}\right) \left(-z\right)=\left(\varphi_{i_1}^{-\beta_1}\varphi_{i_2}^{-\beta_2} \ldots \varphi_{i_l}^{-\beta_l}\right) \left(z\right) .$$
\end{lemma}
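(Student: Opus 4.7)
\emph{Strategy.} My plan is to verify both equalities by a direct factor-by-factor computation inside the product defining $\varphi_w$, using only the definition of the $\W$-action $(uz)_i=z_{u^{-1}(i)}$, the explicit form $\varphi_k(y)=s_k+(y_k-y_{k+1})^{-1}$, and the relation $s_i^2=1$. No braid moves or convexity are needed, since the ordering of the factors on both sides is already fixed by the chosen reduced expression.

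\emph{Step 1: a cancellation identity.} The key combinatorial observation is that, for every $1\le k\le l$,
$$s_{i_{k+1}}s_{i_{k+2}}\cdots s_{i_l}\,w^{-1}\;=\;s_{i_k}s_{i_{k-1}}\cdots s_{i_1}.$$
I would prove this by writing $w^{-1}=s_{i_l}s_{i_{l-1}}\cdots s_{i_1}$ and iteratively cancelling the adjacent pairs $s_{i_l}s_{i_l}$, $s_{i_{l-1}}s_{i_{l-1}}$, \ldots, $s_{i_{k+1}}s_{i_{k+1}}$ in the middle. After this substitution the $k$-th factor of $\varphi_w(w^{-1}z)$ becomes $\varphi_{i_k}\bigl(s_{i_k}s_{i_{k-1}}\cdots s_{i_1}z\bigr)$.

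\emph{Step 2: identifying the factor as $\varphi_{i_k}^{\beta_k}(-z)$.} Setting $y=s_{i_k}s_{i_{k-1}}\cdots s_{i_1}z$, the action formula gives
$$y_{i_k}=z_{s_{i_1}\cdots s_{i_{k-1}}(i_k+1)},\qquad y_{i_k+1}=z_{s_{i_1}\cdots s_{i_{k-1}}(i_k)}.$$
On the other hand, expanding $\beta_k=s_{i_1}\cdots s_{i_{k-1}}(\epsilon_{i_k}-\epsilon_{i_k+1})$ shows that if we write $\beta_k=\alpha_{p_k,q_k}$, the two indices are exactly $p_k=s_{i_1}\cdots s_{i_{k-1}}(i_k)$ and $q_k=s_{i_1}\cdots s_{i_{k-1}}(i_k+1)$. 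Substituting,
$$\varphi_{i_k}(y)=s_{i_k}+\frac{1}{y_{i_k}-y_{i_k+1}}=s_{i_k}+\frac{1}{z_{q_k}-z_{p_k}}=\varphi_{i_k}^{\beta_k}(-z).$$
Multiplying over $k=1,\ldots,l$ in the correct left-to-right order yields the first equality.

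\emph{Step 3: the second equality and conclusion.} The identity $\varphi_{i_k}^{\beta_k}(-z)=\varphi_{i_k}^{-\beta_k}(z)$ is immediate from the definition of $\varphi_k^{\alpha}$, since negating $z$ and flipping $\alpha_{p,q}\leftrightarrow\alpha_{q,p}$ both replace $z_p-z_q$ by $z_q-z_p$; taking the product over $k$ gives the second equality. The proof is essentially bookkeeping, and the only structural input is the cancellation identity in Step 1, which is forced by $s_i^2=1$ and the given decomposition; I do not anticipate a genuine obstacle.
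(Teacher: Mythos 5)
Your proof is correct and follows essentially the same route as the paper's: telescoping $s_{i_{k+1}}\cdots s_{i_l}\,w^{-1}=s_{i_k}\cdots s_{i_1}$ and then recognizing each factor $\varphi_{i_k}(s_{i_k}\cdots s_{i_1}z)$ as $\varphi_{i_k}^{-\beta_k}(z)=\varphi_{i_k}^{\beta_k}(-z)$. The only cosmetic difference is that you unwind the $\W$-action in explicit coordinates $(p_k,q_k)$, whereas the paper invokes the pre-recorded identity $\varphi_i(uz)=\varphi_i^{u^{-1}\alpha_i}(z)$ to compress the same computation.
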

\begin{proof} As $w^{-1}=s_{i_l}\ldots s_{i_1}$, we have
\begin{align*}
\varphi_w\left(w^{-1}z\right) &= \varphi_{i_1}\left(s_{i_2}\ldots s_{i_l}w^{-1} z\right)\ldots \varphi_{i_{l-1}}\left(s_{i_l}w^{-1}z\right) \varphi_{i_l}\left(w^{-1}z\right) \\
&=  \varphi_{i_1}\left(s_{i_1}z\right)\ldots \varphi_{i_{l-1}}\left(s_{i_{l-1}}\ldots s_{i_1} z\right) \varphi_{i_l}\left(s_{i_l}\ldots s_{i_1}z\right) \\
&= \varphi_{i_1}^{s_{i_1}\left(\alpha_{i_1}\right)}\left(z\right)\ldots \varphi_{i_{l-1}}^{s_{i_1}\ldots s_{i_{l-1}}\left(\alpha_{i_{l-1}}\right)}\left(z\right) \varphi_{i_l}^{s_{i_1}\ldots s_{i_{l}}\left(\alpha_{i_{l}}\right)}\left(z\right)\\
&= \varphi_{i_1}^{-\beta_1}\left(z\right)\ldots \varphi_{i_{l}}^{-\beta_l}\left(z\right) \\
&= \varphi_{i_1}^{\beta_1}\left(-z\right)\ldots \varphi_{i_{l}}^{\beta_l}\left(-z\right). \qedhere
\end{align*}
\end{proof}

For a periodic skew diagram $D$, we want to evaluate the function $\varphi_{\w}\left(\w^{-1}z\right)$ at the point $z=\chi_D$. The obstacle is that some factors might have poles. More specifically, $\varphi_{k}^{ij}\left(z\right)$ has a pole along the divisor $z_{i}=z_j$, and for $z=\chi_D$, $z_i=z_j$ if and only if $i$ and $j$ are on the same diagonal of the row reading tableau $T_0$ on $D$. In that case, $\alpha_{i,j-1}$ and $\alpha_{ij}$ are adjacent in the special convex order by Lemma \ref{special}, and by Lemma \ref{convex}, we can reorder the roots after $\alpha_{ij}$ to get another convex order in which $\alpha_{i,j-1}<\alpha_{ij}<\alpha_{j-1,j}$ are adjacent.

\begin{lemma}\label{31}
Let $w\in \W^0$, and assume that $\alpha_{i,j-1}<\alpha_{ij}<\alpha_{j-1,j}$ are adjacent in some convex order on $\mathcal{I}_w$. Then, in the reduced decomposition of $w$ into simple reflections corresponding to this convex order, the factors corresponding to these roots are $s_{a}s_{a+1}s_a$ for some $a$. 
\end{lemma}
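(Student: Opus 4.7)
The plan is to directly unpack the three identities
\[
\beta_k=u\alpha_{i_k},\qquad \beta_{k+1}=us_{i_k}\alpha_{i_{k+1}},\qquad \beta_{k+2}=us_{i_k}s_{i_{k+1}}\alpha_{i_{k+2}},
\]
with $u=s_{i_1}\cdots s_{i_{k-1}}$, and read off $i_k$, $i_{k+1}$, $i_{k+2}$ modulo $n$ from the action of $u\in \W^0$ on $\ZZ$. The convention $s_{a+n}=s_a$ lets me shift the integer representatives of each $i_p$ so that each root equation, a priori only an equality in the affine root lattice, becomes an honest equation on indices in $\ZZ$.

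First, $u\alpha_{i_k}=\alpha_{i,j-1}$ lets me choose the representative of $i_k$ so that $u(i_k)=i$ and $u(i_k+1)=j-1$. Second, $us_{i_k}\alpha_{i_{k+1}}=\alpha_{ij}$ similarly gives $us_{i_k}(i_{k+1})=i$, hence $s_{i_k}(i_{k+1})=i_k$. Inspecting how the simple reflection $s_{i_k}$ acts on $\ZZ$ and discarding the case $i_{k+1}\equiv i_k\pmod n$ (which would force $\beta_{k+1}=-\beta_k<0$), I conclude $i_{k+1}\equiv i_k+1\pmod n$; then $s_{i_k}$ fixes $i_k+2$ (assuming $n\ge 3$---the case $n=2$ is vacuous, since then one of $\alpha_{i,j-1}$, $\alpha_{ij}$ is a nonzero integer multiple of $\mathbf{c}$, fixed by all of $\W^0$ and hence absent from any $\mathcal{I}_w$), so also $u(i_k+2)=j$. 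Finally, solving $us_{i_k}s_{i_{k+1}}\alpha_{i_{k+2}}=\alpha_{j-1,j}$ for $\alpha_{i_{k+2}}$ by applying $s_{i_{k+1}}s_{i_k}u^{-1}$ to $\epsilon_{j-1}-\epsilon_j$ and substituting the values of $u$ and of $s_{i_k},\, s_{i_{k+1}}=s_{i_k+1}$ computed above, I obtain $\alpha_{i_{k+2}}=\epsilon_{i_k}-\epsilon_{i_k+1}=\alpha_{i_k}$, hence $i_{k+2}\equiv i_k\pmod n$. Setting $a=i_k$, the three factors are $s_as_{a+1}s_a$.

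The expected obstacle is the bookkeeping around the ambiguity $s_{a+n}=s_a$: at each step I must choose an integer representative of $i_p$ consistently with the earlier ones and confirm that the final conclusion is independent of those choices, in particular that the wraparound case $a\equiv n-1\pmod n$ correctly gives $s_{n-1}s_0s_{n-1}$. A cleaner but less elementary alternative would be to observe that $\{\alpha_{i,j-1},\alpha_{ij},\alpha_{j-1,j}\}$ is the positive part of an $A_2$ subsystem (with $\alpha_{ij}$ the sum), pull it back through $u^{-1}$ to see that the inversion set of $(s_{i_k}s_{i_{k+1}}s_{i_{k+2}})^{-1}$ is also a three-element $A_2$ subsystem, and then argue that the unique length-three element of $\W^0$ with such an inversion set is the longest element of a rank-two standard parabolic. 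I prefer the direct computation because it avoids a separate verification that the relevant rank-two parabolic is standard.
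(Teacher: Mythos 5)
Your proof is correct and follows essentially the same route as the paper's: unpack the three identities $\beta_{k}=u\alpha_{i_k}$, $\beta_{k+1}=us_{i_k}\alpha_{i_{k+1}}$, $\beta_{k+2}=us_{i_k}s_{i_{k+1}}\alpha_{i_{k+2}}$ and solve for the indices by tracking the $\ZZ$-action. The only substantive difference is that you are more careful than the paper about two edge cases that genuinely matter: you explicitly rule out $i_{k+1}\equiv i_k\pmod n$ (via the observation $\beta_{k+1}=-\beta_k$), and you treat $n=2$ separately, where the paper's computation $c=s_{a+1}s_a(a+1)=a$ actually breaks (for $n=2$ one has $s_{a+1}(a)=a-1\neq a$), but where, as you correctly note, the lemma is vacuous because one of $\alpha_{i,j-1}$, $\alpha_{ij}$ is an imaginary root and hence never lies in $\mathcal{I}_w$.
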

\begin{proof}
Let $\alpha_{i,j-1}$ be the $k$-th root in that particular convex order on $\mathcal{I}_w$, so that $\beta_k=\alpha_{i,j-1}$, $\beta_{k+1}=\alpha_{i,j}$ and $\beta_{k+2}=\alpha_{j-1,j}$. Let $s_a,s_b,s_c$ be the simple reflections corresponding to those factors in the reduced decomposition of $w$ corresponding to this convex order, so $w=w's_as_bs_cw''$ for some $w',w''\in \W^0$.

By the rule for associating roots $\beta_k,\beta_{k+1},\beta_{k+2}$ to the reduced decomposition of $w$, 
\begin{align*}
\alpha_{i,j-1}&=w'\left(\alpha_a\right)\\ 
\alpha_{i,j}&=w's_a\left(\alpha_b\right)\\
\alpha_{j-1,j}&=w's_as_b\left(\alpha_c\right).
\end{align*}
The first of these implies that $i=w'\left(a\right)$ and $j-1=w'\left(a+1\right)$. From the second of these equalities it follows that $i=w's_a\left(b\right)$, which can be rewritten as $b=s_a w'^{-1}(i)=s_a\left(a\right)=a+1$. The third one implies that $j-1=w's_as_b\left(c\right)$, so $c=s_bs_aw'^{-1}\left(j-1\right)=s_{a+1}s_a\left(a+1\right)=a$.  
\end{proof}

So, if $\alpha_{i,j-1}<\alpha_{ij}<\alpha_{j-1,j}$ are adjacent in some convex order on $\mathcal{I}_w$, then the part of the product $\varphi_w\left(w^{-1}z\right)$ corresponding to them is the factor $\left(\varphi_a^{-\alpha_{i,j-1}}\varphi_{a+1}^{-\alpha_{i,j}}\varphi_a^{-\alpha_{j-1,j}}\right)\left(z\right)$.

\begin{lemma}\label{32}
Assume that $z_{j-1}=z_j-1$. Then $$\varphi_a^{j-1,i}\left(z\right)\varphi_{a+1}^{j,i}\left(z\right)\varphi_a^{j,j-1}\left(z\right)=\left(s_a s_{a+1}-\frac{1}{z_j-z_{j-1}}s_{a+1}-\frac{1}{z_j-z_{j-1}} \right) \cdot \varphi_a^{j,j-1}\left(z\right).$$
In particular, it is regular along the divisor $z_i=z_j$.
\end{lemma}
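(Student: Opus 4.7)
The lemma is a purely algebraic identity in $\CC[\W]$, and I would prove it by brute expansion of the threefold product, organized so that the cancellation of the apparent pole at $z_i=z_j$ becomes visible. Substituting the hypothesis $z_j-z_{j-1}=1$ into the three factors and introducing the shorthand $B=z_j-z_i$ (so that $z_{j-1}-z_i=B-1$), the left-hand side becomes
\[
\left(s_a+\tfrac{1}{B-1}\right)\left(s_{a+1}+\tfrac{1}{B}\right)\left(s_a+1\right).
\]
The only factor with a pole at $z_i=z_j$ is the middle one, $\varphi_{a+1}^{j,i}$, which contributes the $\tfrac{1}{B}$; the task is to show that this pole is cancelled by the other two factors.

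Distributing produces eight monomials in $s_a,s_{a+1}$ with rational coefficients in $B$. Applying $s_a^2=s_{a+1}^2=1$ and the braid relation $s_a s_{a+1}s_a=s_{a+1}s_a s_{a+1}$ rewrites everything in terms of the six basis elements $1,s_a,s_{a+1},s_a s_{a+1},s_{a+1}s_a,s_{a+1}s_a s_{a+1}$. Collecting coefficients, the potentially singular contributions to the coefficients of $s_a$ and $1$ take the shape $\tfrac{1}{B}+\tfrac{1}{B(B-1)}$, and this is exactly where the hypothesis $z_j-z_{j-1}=1$ is used, via the partial-fraction identity
\[
\frac{1}{B}+\frac{1}{B(B-1)}=\frac{1}{B-1},
\]
which is equivalent to $(B-1)+1=B$, i.e.\ to $z_j-z_{j-1}=1$. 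After this collapse every coefficient contains $\tfrac{1}{B-1}$ rather than $\tfrac{1}{B}$, and one obtains
\[
\text{LHS}=s_{a+1}s_a s_{a+1}+s_a s_{a+1}+\frac{1}{B-1}\bigl(s_{a+1}+1\bigr)\bigl(s_a+1\bigr),
\]
which is manifestly regular at $B=0$; this settles the regularity assertion.

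To match the stated right-hand side, I would expand $\bigl(s_a s_{a+1}-s_{a+1}-1\bigr)\bigl(s_a+1\bigr)$ (the RHS under $z_j-z_{j-1}=1$) using the same braid relation and observe that it equals $s_{a+1}s_a s_{a+1}+s_a s_{a+1}-(s_{a+1}+1)(s_a+1)$, which is exactly the value of the expression above at $B=0$, i.e.\ along the divisor $z_i=z_j$ where $\tfrac{1}{B-1}=-1$. The main obstacle is purely organizational: keeping the eight terms of the expansion aligned, applying the braid relation correctly to the length-three monomials, and recognizing the partial-fraction miracle. Once these bookkeeping steps are carried out cleanly, both the identity and the regularity follow in a few lines.
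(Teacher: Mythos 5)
Your computation is correct and is essentially the paper's own argument: both are direct expansions in $\CC[\dot W_n]$ using $s_a^2=1$, the only difference being that the paper peels off the singular factor via $(s_a-\tfrac{1}{\varepsilon+1})(s_a+1)=\tfrac{\varepsilon}{\varepsilon+1}(s_a+1)$ (with $\varepsilon=z_i-z_j=-B$) instead of fully distributing the eight monomials and collecting with your partial-fraction identity. One thing worth stating explicitly rather than working around: both you and the paper in fact derive
\[
\varphi_a^{j-1,i}\varphi_{a+1}^{j,i}\varphi_a^{j,j-1}=\Bigl(s_as_{a+1}+\tfrac{1}{B-1}s_{a+1}+\tfrac{1}{B-1}\Bigr)(s_a+1),\qquad \tfrac{1}{B-1}=-\tfrac{1}{z_i-z_{j-1}},
\]
so the coefficient in the printed right-hand side should read $-\tfrac{1}{z_i-z_{j-1}}$ rather than $-\tfrac{1}{z_j-z_{j-1}}$ (which under the hypothesis is identically $-1$ and agrees only on the divisor $z_i=z_j$); your observation that the printed right-hand side is ``exactly the value at $B=0$'' is precisely the diagnosis of this typo, and it is the $B$-dependent identity, not its restriction to $B=0$, that yields the regularity claim.
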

\begin{proof}
Let $z_i-z_j=\varepsilon\in \CC$. Then $z_i-z_{j-1}=\varepsilon+1$. We calculate:
\begin{align*}
\varphi_a^{j-1,i}\left(z\right)\varphi_{a+1}^{j,i}\left(z\right)\varphi_a^{j,j-1}\left(z\right)&=\left(s_a-\frac{1}{z_i-z_{j-1}}\right)\left(s_{a+1}-\frac{1}{z_i-z_{j}}\right)\left(s_a-\frac{1}{z_{j-1}-z_{j}}\right)\\
&=\left(s_a-\frac{1}{\varepsilon+1}\right)\left(s_{a+1}-\frac{1}{\varepsilon}\right)\left(s_a+1\right)\\
&=\left(s_a-\frac{1}{\varepsilon+1}\right)s_{a+1}\left(s_a+1\right)-\frac{1}{\varepsilon}\left(s_a-\frac{1}{\varepsilon+1}\right)\left(s_a+1\right)\\
&=\left(s_a-\frac{1}{\varepsilon+1}\right)s_{a+1}\left(s_a+1\right)-\frac{1}{\varepsilon}\cdot \frac{\varepsilon}{\varepsilon+1}\left(s_a+1\right)\\
&=\left(s_a s_{a+1}-\frac{1}{\varepsilon+1}s_{a+1}-\frac{1}{\varepsilon+1} \right) \cdot \left(s_a+1\right),
\end{align*}
which proves the first claim. In particular, its limit at $z_i=z_j$ is 
\begin{align*}
\lim_{\varepsilon\to 0}\left(s_a s_{a+1}-\frac{1}{\varepsilon+1}s_{a+1}-\frac{1}{\varepsilon+1} \right) \cdot \left(s_a+1\right)&=\left(s_a s_{a+1}-s_{a+1}-1 \right) \cdot \left(s_a+1\right).\qedhere
\end{align*}
\end{proof}

We are now ready to prove a key proposition. 

\begin{proposition}\label{regular}
For a periodic skew diagram $D$ and the permutation $\w$ from Lemma \ref{defw}, the function
$\varphi_{\w}\left(\w ^{-1}z\right)$, restricted to the set 
$$\mathcal{F}_D=\{ \left(z_i\right)_{i} | \forall \,\, i,j \textrm{ in the same row of } T_0 \,\,, z_i-z_j=\chi_i-\chi_j \}$$
is regular and nonzero in a neighbourhood of the point $z=\chi_D\in \mathcal{F}_D$.
\end{proposition}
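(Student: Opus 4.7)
The plan is to expand $\varphi_\w(\w^{-1}z)$ in the reduced decomposition coming from the special convex order of Definition \ref{defspecial}, identify the factors that become singular at $z = \chi_D$, and resolve each such singularity locally by a three-term fusion. First I would use Lemma \ref{varphiwasaproduct}, together with the convexity of the special order (Lemma \ref{special} and Lemma \ref{convex}(1)), to write
\[
\varphi_\w(\w^{-1}z) \;=\; \varphi_{i_1}^{-\beta_1}(z)\cdots\varphi_{i_l}^{-\beta_l}(z),
\]
where $\beta_1<\cdots<\beta_l$ is the special order on $\mathcal{I}_\w$. The only pole of $\varphi_{i_k}^{-\beta_k}$ is along $z_p = z_q$ for $\beta_k = \alpha_{pq}$, and on $\mathcal{F}_D$ this hyperplane passes through $z = \chi_D$ exactly when $\chi_p = \chi_q$, i.e.\ when $p$ and $q$ lie on the same diagonal of $T_0$. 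Call such a root \emph{diagonal}; the non-diagonal factors are automatically regular and nonzero at $z=\chi_D$, so the problem reduces to fusing the diagonal ones.

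For each diagonal root $\alpha_{ij}$, Lemma \ref{special}(3) supplies $\alpha_{i,j-1}, \alpha_{j-1,j} \in \mathcal{I}_\w$ together with the adjacency $\alpha_{i,j-1} < \alpha_{ij}$ in the special order. Applying Lemma \ref{convex}(2) produces a new convex order, and hence by Lemma \ref{convex}(1) a new reduced decomposition of $\w$, in which the triple $\alpha_{i,j-1} < \alpha_{ij} < \alpha_{j-1,j}$ is consecutive. Lemma \ref{31} then identifies the three corresponding factors as a block of shape $\varphi_a^{\bullet}\varphi_{a+1}^{\bullet}\varphi_a^{\bullet}$, and Lemma \ref{32}, combined with the defining relation $z_{j-1} = z_j - 1$ of $\mathcal{F}_D$, rewrites this block as a product with removable singularity at $z_i = z_j$ and an explicit nonzero limit in $\CC[\W]$.

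The remaining, and principal, work is to treat all diagonal roots simultaneously. I would proceed by induction on the number of diagonal roots, processing them in the special order: at each step, $\alpha_{i,j-1}$ is already adjacent to $\alpha_{ij}$ by Lemma \ref{special}(3), and I would pull $\alpha_{j-1,j}$ up via Lemma \ref{convex}(2) until it sits immediately after $\alpha_{ij}$, thus assembling a fused triple which can be absorbed into the growing list of regular blocks. The main obstacle I anticipate is the combinatorial bookkeeping required to verify that these successive rearrangements neither disturb a triple already formed for a different diagonal root, nor destroy an adjacency of the form provided by Lemma \ref{special}(3) that is still needed further along. The special order of Definition \ref{defspecial} was designed precisely so that these rearrangements are compatible (accompanying roots of distinct diagonal roots sit in well-localized, essentially disjoint segments), but checking noninterference in full generality is the delicate combinatorial step.

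Once the whole product has been rewritten as a commuting product of regular fused triples and regular singletons, regularity on $\mathcal{F}_D$ at $z = \chi_D$ is immediate. For nonvanishing, each singleton is of the form $s_k + c$ with $c \in \CC^\times$ at $z=\chi_D$ (the residual differences $\chi_p - \chi_q$ being nonzero integers), hence nonzero and invertible in the localization used in Section \ref{Intertwiners}, and each fused triple has the explicit nonzero value computed in Lemma \ref{32}; multiplying these together yields a nonzero element of $\CC[\W]$.
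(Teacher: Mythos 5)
Your overall strategy matches the paper's: expand $\varphi_{\w}(\w^{-1}z)$ via the special convex order, observe that poles occur exactly at diagonal roots $\alpha_{ij}$, and resolve each via a three-term fusion supplied by Lemma~\ref{special}(3), Lemma~\ref{convex}(2), Lemma~\ref{31} and Lemma~\ref{32}. But there is a genuine gap at the point you yourself flag: you propose to \emph{permanently} move each $\alpha_{j-1,j}$ up to form a fused triple, and then ``absorb'' the triples into a list of regular blocks, leaving the noninterference verification unresolved. This is exactly the delicate step, and it is not obvious that the special order alone guarantees it. Once $\alpha_{j-1,j}$ is pulled out of its original position and consumed by a triple, the reduced decomposition has changed globally, and you would need to check that the adjacencies of the form $\alpha_{i',j'-1} < \alpha_{i'j'}$ required for later fusions (Lemma~\ref{special}(3)) and the availability of the corresponding $\alpha_{j'-1,j'}$ are preserved. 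As written, your proposal asserts this but does not prove it.

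The paper sidesteps the whole issue with a ``borrow and return'' bookkeeping that you should note as a cleaner alternative. Rather than permanently restructuring the product, the paper evaluates factors left to right: upon reaching a diagonal root $\alpha_{ij}$ at position $k+1$ (preceded by $\alpha_{i,j-1}$ at position $k$, which has no pole), it uses Lemma~\ref{convex}(2) to reorder only positions $k+2,\dots,l$ so that $\alpha_{j-1,j}$ temporarily sits at position $k+2$; applies Lemma~\ref{31} and Lemma~\ref{32} to conclude that the limit of the triple is $(s_as_{a+1}-s_{a+1}-1)\cdot\varphi_a^{j,j-1}(\chi_D)$; and then reorders positions $k+2,\dots,l$ \emph{back} to the special order, returning $\varphi_a^{j,j-1}$ to its original place. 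The net effect is to replace only the \emph{pair} at positions $k,k+1$ by the constant $(s_as_{a+1}-s_{a+1}-1)\in\CC[W_n]$, leaving every other factor---including the third term $\varphi_a^{j,j-1}$---untouched. Since the remaining tail is unchanged, the next diagonal root is encountered in exactly the configuration guaranteed by Lemma~\ref{special}, and induction goes through with no interference to verify.

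One further remark on nonvanishing: your argument that ``each singleton is invertible'' is on the wrong track, since a fused block $(s_as_{a+1}-s_{a+1}-1)$ need not be invertible in $\CC[W_n]$. The paper instead observes that each factor---singleton or fused block---has leading term $s_{i_k}$ (respectively $s_as_{a+1}$) plus strictly shorter terms, so the full product is $\w$ plus a linear combination of shorter elements, hence nonzero in $\CC[\W]$.
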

We allow arbitrarily many $z_i$, but $\varphi_{\w}\left(\w ^{-1}z\right)$ only depends on finitely many of them, namely on those $z_i$ such that there exists $j\in \mathbb{Z}$ such that $\pm \alpha_{i,j} \in \mathcal{I}_{\w}$. The statement should be interpreted in that way, i.e. $\mathcal{F}_D \subset \CC^N$ for some $N$. 
\begin{proof}
Let $\w=s_{i_1}\ldots s_{i_l}$ be the reduced decomposition corresponding to the special order from Section \ref{specialorder}. By Lemma \ref{varphiwasaproduct},  
$$\varphi_{\w}\left(\w ^{-1}z\right)=\left(\varphi_{i_1}^{-\beta_1}\varphi_{i_2}^{-\beta_2}\ldots \varphi_{i_l}^{-\beta_l}\right)\left(z\right).$$
The factors of this product which have a pole at $z=\chi_D$ are those for which $\beta_k=\alpha_{ij}$ with $\chi_i=\chi_j$, meaning that $i$ and $j$ are on the same diagonal of the row reading tableau $T_0$ on $D$. By Lemma \ref{special}, in that case the affine root preceding $\alpha_{ij}$ in the special order is $\beta_{k-1}=\alpha_{i,j-1}$, which does not have a pole. Restricted to $\mathcal{F}_D$ and in the neighbourhood of $z=\chi_D$, all other terms can be evaluated. To evaluate $\varphi_{\w}\left(\w ^{-1}z\right)|_{\mathcal{F}_D}$ at $z=\chi_D$, we proceed as follows, evaluating terms from left to right:
\begin{itemize}
\item If $k$ is such that neither the $k-$th term $\varphi_{i_k}^{-\beta_k}$ nor the $\left(k+1\right)-$st term $\varphi_{i_{k+1}}^{-\beta_{k+1}}$ have a pole, then evaluate it at $\chi_D$, getting $\varphi_{i_k}^{-\beta_k}\left(\chi_D\right)$.
\item If terms up to $\left(k-1\right)$-st have been evaluated, the $k$-th term needs to be evaluated next and the $\left(k+1\right)$-st term has a pole, then we evaluate the $k$-th and $\left(k+1\right)$-st term together. 
 By Lemma \ref{special}, $\beta_k=\alpha_{i,j-1}$, $\beta_{k+1}=\alpha_{i,j}$, and $i$ and $j$ are on the same diagonal in $D$. Using Lemma \ref{convex}, we can reorder terms $k+2,\ldots, l$, keeping terms $1,\ldots, k+1$ fixed, so that the new order is again convex and represents a different reduced decomposition of $\w$, and so that in the new order the $\left(k+2\right)$-nd term is $\beta_{k+2}=\alpha_{j-1,j}$. By Lemma \ref{31}, the $k$-th, $\left(k+1\right)$-st and $\left(k+2\right)$-nd terms are then $\left(\varphi_{a}^{j-1,i}\varphi_{a+1}^{j,i}\varphi_{a}^{j,j-1}\right)\left(z\right)$. By Lemma \ref{32}, the limit of this product along $\mathcal{F}_D$ at $\chi_D$ is $\left(s_a s_{a+1}-s_{a+1}-1\right) \cdot \varphi_a^{j,j-1}\left(z\right)|_{z=\chi_D}$. Now reorder the terms $k+2,\ldots ,  l$ again back to their original order, bringing $\varphi_a^{-\alpha_{j-1,j}}\left(z\right)$ back to its original place. The net effect of this step was to replace the product of the $k$-th and $\left(k+1\right)$-st term by $\left(s_a s_{a+1}-s_{a+1}-1\right)$, keeping all the other terms fixed. 
\end{itemize}

In this way, one can evaluate all terms of $\varphi_{\w}\left(\w ^{-1}z\right)|_{\mathcal{F}_D}$ at $z=\chi_D$, so this rational function is regular at that point. To see that it is nonzero, notice that $\varphi_{\w}\left(\w ^{-1}\chi_D\right)=\w+$ a linear combination of shorter terms, so in particular it is nonzero in $\CC[\W]$. 
\end{proof}

The procedure described here is called \emph{fusion}. In later computations, we will call the terms $\varphi_{a}^{-\alpha_{i,j-1}}\varphi_{a+1}^{-\alpha_{i,j}}$ fused, and refer to $\varphi_{a}^{-\alpha_{i,j-1}}$, $\varphi_{a+1}^{-\alpha_{i,j}}$ and $\varphi_{a}^{-\alpha_{i,j-1}}$ as the first, second and third term of a fusion.

Another key proposition is
\begin{proposition}\label{mapF}
The vector $$E_D=\varphi_{\w}\left(\w ^{-1}z\right)|_{z=\chi_D\in \mathcal{F}_D}\One_{\w^{-1}\chi_D}\in M_{\w^{-1}\chi_D}$$
is an eigenvector for $\u$ with the eigenvalue $\chi_D$. It determines a nonzero $\H$ homomorphism of Verma modules $F:M_{\chi_D} \to M_{\w^{-1}\chi_D}$ by $F\left(\One_{\chi_D}\right)=\One_{\w^{-1}\chi_D}$.
\end{proposition}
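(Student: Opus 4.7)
My plan has two steps: first establish that $E_D$ is a $\u$-eigenvector with eigenvalue $\chi_D$, then invoke Frobenius reciprocity to produce $F$.

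For the eigenvector property, I would realize $E_D$ as the limit of a family of manifest eigenvectors. For $y \in \mathcal{F}_D$ close to but different from $\chi_D$ and avoiding the pole locus of the factors $\varphi_{i_k}^{-\beta_k}$, the vector $v(y) := \varphi_{\w}(\w^{-1}y) \One_{\w^{-1}y}$ lives in the Verma module $M_{\w^{-1}y}$ and, by the intertwining identity $\Phi_{\w} u_j = u_{\w(j)} \Phi_{\w}$ applied to $\One_{\w^{-1}y}$ (which has $\u$-eigenvalue $\w^{-1}y$), is an eigenvector of $\u$ with eigenvalue $y$. Identifying each Verma module $M_{\w^{-1}y}$ with $\CC[\W]$ as a left $\CC[\W]$-module by sending $\One_{\w^{-1}y} \leftrightarrow 1$ and transporting the $\u$-action, the operators $u_i$ become a family $u_i^{(y)}$ on $\CC[\W]$ that depends polynomially on $y$. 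This polynomial dependence comes from the DAHA commutation relations, which let us push any $u_i$ past any $w\in \W$ at the cost of lower-order $\CC[\W]$-terms, and from the fact that once pushed to the right $u_i$ acts on $\One_{\w^{-1}y}$ by the linear-in-$y$ scalar $(\w^{-1}y)_{w^{-1}(i)}$. The eigenvalue equation $u_i^{(y)} v(y) = y_i v(y)$ therefore holds as an identity of rational $\CC[\W]$-valued functions on $\mathcal{F}_D$ away from the pole locus. By Proposition \ref{regular}, $v(y)|_{\mathcal{F}_D}$ extends regularly to $y = \chi_D$ with limit $E_D$; the operator side extends by polynomial regularity of $u_i^{(y)}$, and the scalar side trivially extends. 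Taking limits yields $u_i E_D = (\chi_D)_i E_D$.

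Given the eigenvector property, Frobenius reciprocity
\[
\Hom_{\H}(M_{\chi_D}, M_{\w^{-1}\chi_D}) \;\cong\; M_{\w^{-1}\chi_D}[\chi_D]
\]
produces a unique $\H$-homomorphism $F: M_{\chi_D} \to M_{\w^{-1}\chi_D}$ with $F(\One_D) = E_D$, and $F$ is nonzero because $E_D$ is (as observed at the end of the proof of Proposition \ref{regular}, $E_D = \w\One_{\w^{-1}\chi_D}$ plus terms supported on strictly shorter elements of $\W$, hence is nonzero in the $\CC[\W]$-basis of $M_{\w^{-1}\chi_D}$).

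I expect the only real obstacle to be the limit-transport step: rigorously justifying that $u_i^{(y)}$, the $\u$-action transported to $\CC[\W]$ via the identification above, is polynomial in $y$, so that both sides of the eigenvalue equation extend to $y = \chi_D$ along $\mathcal{F}_D$. This amounts to unwinding the isomorphism $\H \cong \CC[\W] \otimes \CC[\u]$ of vector spaces and tracking how $\u$ commutes through the $\CC[\W]$ factor; it is routine but bookkeeping-heavy in the degenerate double affine setting, and is essentially the only place where one needs to be careful. Everything else in the proof — the intertwining identity for $\Phi_{\w}$, the regularity from the fusion procedure, and the Frobenius reciprocity step — is either already at our disposal from Section \ref{Intertwiners} or from Proposition \ref{regular}.
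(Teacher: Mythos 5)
Your proposal is correct, but it takes a genuinely different route from the paper's for the main part (showing $E_D$ is a $\u$-eigenvector). The paper argues by downward induction on $k$, defining partial products $v_k=\bigl(\varphi_{i_{k+1}}^{-\beta_{k+1}}\ldots\varphi_{i_l}^{-\beta_l}\bigr)|_{z=\chi_D}\One_{\w^{-1}\chi_D}$ and verifying at each step that $v_k$ is a $\u$-eigenvector with the expected eigenvalue; the inductive step splits into the generic case (apply a well-defined intertwiner $\Phi_{i_k}$) and the pole case, where the fused factor $(s_a s_{a+1}-s_{a+1}-1)$ is analyzed directly using the DAHA commutation relations $s_au_a=u_{a+1}s_a-1$, etc., to confirm the eigenvalue equation term-by-term. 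Your approach sidesteps all of this by working over the base $\mathcal{F}_D$: for generic $y\in\mathcal{F}_D$ the standard intertwining identity $\Phi_{\w}u_{\w^{-1}(i)}=u_i\Phi_{\w}$ gives that $v(y)=\varphi_{\w}(\w^{-1}y)\One_{\w^{-1}y}$ is manifestly an eigenvector, and since under the identification $M_{\w^{-1}y}\cong\CC[\W]$ the transported operators $u_i^{(y)}$ depend polynomially (in fact affine-linearly) on $y$ — because pushing $u_i$ through $\CC[\W]$ using the PBW decomposition $\CC[x^{\pm}]\otimes\CC[W_n]\otimes\CC[\u]\cong\H$ produces terms that are evaluated affine-linearly on $\One_{\w^{-1}y}$ — the eigenvalue identity persists as a rational identity on $\mathcal{F}_D$ and passes to the limit at $\chi_D$ using Proposition~\ref{regular}. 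The trade-off: the paper's proof is more explicit and self-contained (and its pole-case computation reappears in spirit in the later argument of Lemma~\ref{column}), while your continuity argument is more conceptual, shorter, and avoids re-deriving the eigenvalue action of the fused operator. The Frobenius reciprocity and nonvanishing steps agree with the paper. The one point you flag as needing care — the polynomial dependence of $u_i^{(y)}$ on $y$ — is indeed the only nontrivial bookkeeping, and your sketch of why it holds (commute $u_i$ through $\CC[\W]$, evaluate $u_j$-factors on $\One_{\w^{-1}y}$ to get linear-in-$y$ scalars) is the right argument; I see no gap.
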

\begin{proof}
The vector $\varphi_{\w}\left(\w ^{-1}z\right)|_{z=\chi_D\in \mathcal{F}_D}\One_{\w^{-1}\chi_D}\in M_{\w^{-1}\chi_D}$
is well defined and nonzero by the previous lemma. The only thing to prove is that it is an eigenvector $\u$ with the eigenvalue $\chi_D$.

Define $$v_k=\left( \varphi_{i_{k+1}}^{-\beta_{k+1}}\ldots \varphi_{i_{l}}^{-\beta_l}\right) |_{z=\chi_D\in \mathcal{F}_D} \One_{\w^{-1}\chi_D}$$
for all $0\le k\le l$ such that $\varphi_{i_{k+1}}^{-\beta_{k+1}}\left(z\right)$ does not have a pole at $z=\chi_D$. This is a regular and nonzero in the limit $z\to \chi_D$ along $z\in \mathcal{F}_D$. 

We will prove by downwards induction on $k$, starting from $k=l$ and going down by steps of $1$ or $2$, that $v_k$ is an eigenvector for $\u$ with the eigenvalue $s_{i_{k+1}}\ldots s_{i_l} \w^{-1}\chi_D$.

For $k=l$, $v_l=\One_{\w^{-1}\chi_D}\in M_{\w^{-1}\chi_D}$, and this is an eigenvector with the eigenvalue $\w^{-1}\chi_D$. Assume we have proved the claim for $v_k$. In the step if induction, we will distinguish two cases, and prove the corresponding claim for either $v_{k-1}$ or $v_{k-2}$.

{\bf First case:} Assume that $\varphi_{i_{k}}^{-\beta_{k}}\left(z\right)$ does not have a pole at $z=\chi_D$. Then the intertwiner $\Phi_{i_k}$ is well defined (it does not have a pole) on the eigenspace $s_{i_{k+1}}\ldots s_{i_l} \w^{-1}\chi_D$, and, restricted to it, $\Phi_{i_k}=\varphi_{i_k}^{-\beta_k}\left(\chi_D\right)$. Thus, $v_{k-1}=\Phi_{i_k}v_k$, which is an eigenvector for $\u$ with the eigenvalue $s_{i_k}s_{i_{k+1}}\ldots s_{i_l} \w^{-1}\chi_D$.

{\bf Second case:} Assume that $\varphi_{i_{k}}^{-\beta_{k}}\left(z\right)$ has a pole at $z=\chi_D$. We will prove the claim for $v_{k-2}=\varphi_{i_{k-1}}^{-\beta_{k-1}}\varphi_{i_{k}}^{-\beta_{k}}v_{k}$. 

By the definition of the special order, and by Lemma \ref{31}, there exists some integers $a,i,j$ such that $\varphi_{i_{k-1}}^{-\beta_{k-1}}\varphi_{i_{k}}^{-\beta_{k}}=\varphi_{a}^{-\alpha_{i,j-1}}\varphi_{a+1}^{-\alpha_{i,j}}$. By the fusion procedure from Lemma \ref{regular}, in the limit $z\to \chi_D$ along $\mathcal{F}_D$, the product $\varphi_{i_{k-1}}^{-\beta_{k-1}}\left(z\right)\varphi_{i_k}^{-\beta_k}\left(z\right)$ is replaced by the fused factor $\left(s_{a}s_{a+1}-s_{a+1}-1\right)$.

Using that the product defining $v_{k-2}$ is matched and that $\varphi_{a+1}^{j,i}$ has a at $z=\chi_D$, we see that the eigenvaleus of $u_{a},u_{a+1},u_{a+2}$ on $v_k$ are, respectively, $c-1,c,c$ for $c=\left(\chi_D\right)_i$. Additionally, we can reorder  $s_{i_{k+1}}\ldots s_{i_{l}}$ so that $\varphi_{i_{k-1}}^{-\beta_{k-1}}=\varphi_a^{-\alpha_{j-1,j}}=\left(s_a+1\right)$. In particular, $\left(s_a-1\right)v_k=0.$

We now calculate the action of $u_i$ on $v_{k-2}=\left(s_{a}s_{a+1}-s_{a+1}-1\right)v_k$ for all $i=1,\ldots ,n$.
\begin{itemize}
\item If $i\ne a,a+1, a+2$, then $u_i\left(s_{a}s_{a+1}-s_{a+1}-1\right)=\left(s_{a}s_{a+1}-s_{a+1}-1\right)u_i$ and $\left(s_{a}s_{a+1}\nu\right)_i=\nu_i$, so $v_{k-2}$ is an eigenvector for $u_i$ with the eigenvalue $\left(s_{i_{k-1}}s_{i_{k}}s_{i_{k+1}}\ldots s_{i_l} \w^{-1}\chi_D\right)\left(u_i\right)$. 
\item If $i=a$, then 
\begin{align*}
u_iv_{k-2}&=u_a\left(s_{a}s_{a+1}-s_{a+1}-1\right)v_k\\
&=\left(s_as_{a+1}u_{a+2}-s_a-s_{a+1}-s_{a+1}u_{a}-u_a\right)v_k\\
&=\left(s_as_{a+1}c-s_a-s_{a+1}-s_{a+1}\left(c-1\right)-\left(c-1\right)\right)v_k\\
&=c\left(s_{a}s_{a+1}-s_{a+1}-1\right)v_k-\left(s_a-1\right)v_k\\
&=cv_{k-2}.
\end{align*}
\item If $i=a+1$, then 
\begin{align*}
u_iv_{k-2}&=u_{a+1}\left(s_{a}s_{a+1}-s_{a+1}-1\right)v_k\\
&=\left(s_{a}s_{a+1}u_{a}+s_{a+1}-s_{a+1}u_{a+2}+1-u_{a+1}\right)v_k\\
&=\left(s_{a}s_{a+1}-s_{a+1}-1\right)\left(c-1\right)v_k\\
&= \left(c-1\right)v_{k-2}.
\end{align*}
\item If $i=a+2$, then 
\begin{align*}
u_iv_{k-2}&=u_{a+2}\left(s_{a}s_{a+1}-s_{a+1}-1\right)v_k\\
&=\left(s_{a}s_{a+1}u_{a+1}+s_{a}-s_{a+1}u_{a+1}-1-u_{a+2}\right)v_k\\
&=\left(\left(s_{a}s_{a+1}-s_{a+1}-1\right)c+\left(s_a-1\right)\right)v_k\\
&= cv_{k-2}.
\end{align*}
\end{itemize}

So, $v_{k-2}$ is an eigenvector for $\u$ with the eigenvalue $s_{i_{k-1}}\ldots s_{i_l} \w^{-1}\chi_D$. This finishes the induction argument. 

In particular, the vector $v_0=\varphi_{\w}\left(\w ^{-1}z\right)|_{z=\chi_D\in \mathcal{F}_D}\One_{\w^{-1}\chi_D}$ is an eigenvector for $\u$ with the eigenvalue $s_{i_1}\ldots s_{i_l} \w^{-1}\chi_D=\w \w^{-1}\chi_D=\chi_D$.

Verma modules are induced modules, so the eigenvector $E_D\in M_{\w^{-1}\chi_D}$ induces a homomorphism of Verma modules $F:M_D\to M_{\w^{-1}\chi_D}$. Because $\varphi_{\w}\left(\w ^{-1}z\right)|_{z=\chi_D\in \mathcal{F}_D}\in \CC[\W]$ is nonzero and Verma module $M_{\w^{-1}\chi_D}$ is free as a $\CC[\W]$ module, it follows that $E_D$ is nonzero, and the morphism $F$ is nonzero as well. 
\end{proof}

\begin{example}\label{example4e}
For $D$ and $\w$ as in Examples \ref{example4}, \ref{example4b}, \ref{example4d},
\begin{align*}
E_D&=\varphi_{\w}\left(\w ^{-1}z\right)|_{z=\chi_D\in \mathcal{F}_D}\One_{\w^{-1}\chi_D}\\
&=\lim_{\substack{z=\left(\varepsilon,1+\varepsilon, -1,0\right)\\ \epsilon\to 0}} \left(\varphi_{1}^{12} \varphi_{2}^{13}\varphi_{3}^{14}\varphi_{2}^{34}\varphi_{0}^{-3,2}\varphi_{1}^{-3,4}\varphi_{3}^{-1,2}\varphi_{0}^{-1,4}\varphi_{3}^{24}\right)\left(-z\right) \mathbf{1}_{\left(2,3,-3,-2\right)}\\
&=\lim_{\varepsilon\to 0}\left(\varphi_{1}^{21} \varphi_{2}^{31}\varphi_{3}^{41}\varphi_{2}^{43}\varphi_{0}^{2,-3}\varphi_{1}^{4,-3}\varphi_{3}^{2,-1}\varphi_{0}^{4,-1}\varphi_{3}^{42}\right)\left(z\right) \mathbf{1}_{\left(2,3,-3,-2\right)}\\
&=\lim_{\varepsilon\to 0}\left(s_1-\frac{1}{z_1-z_2}\right)\left(s_2-\frac{1}{z_1-z_3}\right) \left(s_3-\frac{1}{z_1-z_4}\right)\left(s_2-\frac{1}{z_3-z_4}\right)\cdot\\
&\cdot\left(s_0-\frac{1}{z_{-3}-z_2}\right)\left(s_1-\frac{1}{z_{-3}-z_4}\right)\left(s_3-\frac{1}{z_{-1}-z_2}\right)\left(s_0-\frac{1}{z_{-1}-z_4}\right)\left(s_3-\frac{1}{z_2-z_4}\right)\mathbf{1}_{\left(2,3,-3,-2\right)}\\
&=\lim_{\varepsilon\to 0}\left(\left(s_1+1\right)\left(s_2-\frac{1}{1+\varepsilon}\right) \left(s_3-\frac{1}{\varepsilon}\right)\left(s_2+1\right)\right.\\
&\left. \left(s_0-\frac{1}{2}\right)\left(s_1-\frac{1}{3+\varepsilon}\right)\left(s_3-\frac{1}{1-\varepsilon}\right)\left(s_0-\frac{1}{2}\right)\left(s_3-\frac{1}{1+\varepsilon}\right)\right) \mathbf{1}_{\left(2,3,-3,-2\right)}\\\\
&=\left(s_1+1\right)\left(s_2s_3-s_3-1\right)\left(s_2+1\right)\left(s_0-\frac{1}{2}\right)\left(s_1-\frac{1}{3}\right)\left(s_3-1\right)\left(s_0-\frac{1}{2}\right)\left(s_3-1\right)\mathbf{1}_{\left(2,3,-3,-2\right)}.
\end{align*}
\end{example}

\subsection{The map $F:M_{\chi_D} \to M_{\w^{-1}\chi_D}$ factors through $L_D$}
To prove that the homomorphism $F$ constructed in the last section as a map between Verma modules induces an inclusion of the irreducible module $L_D$ into the Verma module $M_{\w^{-1}\chi_D}$, we have to show that it is zero on the kernel of the quotient map $Q:M_{\chi_D}\to L_D$. By Theorem \ref{defss}, the kernel of $Q$ is generated as an $\H$-module by 
$$\{ \Phi_i \Phi_w \mathbf{1}_D | wT_0 \textrm{ standard, } s_iwT_0 \textrm{ not standard}\}.$$  Let us study this set first, before proving that $F|_{\Ker Q}=0$.

\begin{lemma}\label{shorter}
Assume that $D$ is a periodic skew diagram and $w\in \W$ such that $wT_0$ is standard and $s_iwT_0$ is not standard. Then $l\left(s_iw\w\right)<l\left(w\w\right)$.
\end{lemma}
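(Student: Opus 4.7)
The plan is to reduce the claim to a standard length criterion for the extended affine Weyl group: for any $v \in \W$ and simple reflection $s_i$, one has $l(s_iv) < l(v)$ if and only if $v^{-1}(\alpha_i)$ is a negative root. Applied with $v = w\w$, the lemma becomes the assertion that $\w^{-1} w^{-1}(\alpha_i) \in -\mathcal{R}_+$, i.e.\ that $\w^{-1}w^{-1}(i) > \w^{-1}w^{-1}(i+1)$. This criterion, well known for the affine Weyl group $\W^0$, extends to $\W$ using $\pi s_j\pi^{-1} = s_{j+1}$ and the fact that $l(\pi) = 0$, so passing to the $\W^0$-part does not affect the question. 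I would state this reduction explicitly at the start and then focus entirely on the combinatorial inequality between reading-order positions.

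The next step is to interpret the two integers $w^{-1}(i)$ and $w^{-1}(i+1)$ as labels of $T_0$. Since $wT_0$ is standard and $s_iwT_0$ is not, Lemma~\ref{adjacent} guarantees that in $wT_0$ the boxes containing $i$ and $i+1$ are adjacent (either horizontally or vertically). Writing these boxes as $T_0^{-1}(w^{-1}(i))$ and $T_0^{-1}(w^{-1}(i+1))$, they are of the form $(a,b)$ and $(a,b+1)$, or $(a,b)$ and $(a+1,b)$, respectively. Under the action of $\w$ on $\ZZ$, the value $\w^{-1}(T_0(a',b'))$ is by construction the position of the box $(a',b')$ in the reading order of Section~\ref{constructw}.

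The final step is purely combinatorial: compare reading-order positions in each of the two adjacency cases using the recipe that $\w$ reads boxes up each column and then jumps to the bottom of the column immediately to the left. In the horizontal case, the box $(a,b+1)$ of $i+1$ lies in the column strictly to the right of $(a,b)$, so it is read earlier. In the vertical case, the box $(a+1,b)$ of $i+1$ lies strictly below $(a,b)$ in the same column, so it is again read earlier. In both cases $\w^{-1}w^{-1}(i+1) < \w^{-1}w^{-1}(i)$, which is exactly the required inequality, completing the proof.

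I do not anticipate a significant obstacle: the main subtlety is simply making sure the reading order is described correctly (right-to-left across columns and bottom-to-top within a column) so that both adjacency configurations land on the same inequality, and verifying that the length criterion really does survive the passage from $\W^0$ to $\W$.
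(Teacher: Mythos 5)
Your proposal is correct and follows essentially the same route as the paper: reduce to the criterion that $l(s_i v) < l(v)$ iff $v^{-1}(\alpha_i)$ is negative (cited there as \cite{H}, Lemma~1.6), apply Lemma~\ref{adjacent} to conclude $i$ and $i+1$ are adjacent in $wT_0$, and then check the two adjacency configurations against the column-reading order defining $\w$. The paper phrases the last step via the arrow/inversion-set picture (an analogue of Lemma~\ref{Iw}), whereas you unfold it directly as a comparison of reading-order positions; this is the same content, and your explicit remark about extending the length criterion from $\W^0$ to $\W$ (using $l(\pi)=0$) is if anything slightly more careful than the paper, which invokes it silently.
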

\begin{proof}
By \cite{H}, Lemma 1.6, $l\left(s_iw\w\right)=l\left(w\w\right)-1$ if and only if $\alpha_{i,i+1}$ is an inversion of $\left(w\w\right)^{-1}$. The permutation $w\w$ of $\ZZ$ is obtained by reading the entries of $wT_0$, up each column to its end, and continuing up the next column to the left. By an analogue of Lemma \ref{Iw}, the set $\mathcal{I}_{w\w}$ of inversions of $\left(w\w\right)^{-1}$ is the set of all roots $\alpha_{jk}$ such that the arrow $\overrightarrow{jk}$ on the tableau $wT_0$ is pointing right, pointing down, or pointing down and right. By Lemma \ref{adjacent}, the boxes containing $i$ and $i+1$ in $wT_0$ are adjacent, and the arrow $\overrightarrow{i,i+1}$ is pointing one box right or pointing one box down, so $\alpha_{i,i+1}$ is an inversion of $\left(w\w\right)^{-1}$ and $l\left(s_iw\w\right)=l\left(w\w\right)-1$.
\end{proof}

\begin{lemma}\label{row}
Assume that $D$ is a periodic skew diagram with no infinite column, $\kappa \ge 2$, and $F:M_D\to M_{\w^{-1}D}$ as defined in Proposition \ref{mapF}. For every $w\in \W$ such that $wT_0$ is standard, $s_{i}wT_0$ is not, and $i$ and $i+1$ are in the same row of $wT_0$, we have $$F\left(\Phi_i\Phi_w\mathbf{1}_D\right)=0.$$
\end{lemma}
\begin{proof}
Using that $\varphi_{s_i w}\left(z\right)$ is regular at $z= \chi_D$, we get
\begin{align*}
F\left(\Phi_i\Phi_w \mathbf{1}_D\right)&= \Phi_i\Phi_w F\left(\mathbf{1}_D\right)\\
&= \varphi_{s_iw}\left(\chi_D\right) \lim_{\substack{z\to \chi_D \\ z\in \mathcal{F}_D}} \varphi_{\w}\left(\w^{-1}\left(z\right)\right)\mathbf{1}_{\w^{-1}\chi_D}\\
&= \lim_{\substack{z\to \chi_D \\ z\in \mathcal{F}_D}}  \varphi_{s_i}\left(w\left(z\right)\right)\varphi_{w\w}\left(\w^{-1}\left(z\right)\right)\mathbf{1}_{\w^{-1}\chi_D}.
\end{align*}

By Lemma \ref{shorter}, $l\left(s_iw\w\right)<l\left(w\w\right)$, so it is possible to write $w\w=s_iw_1$, with $l\left(w_1\right)=l\left(w\w\right)-1$. Use this decomposition to write $\varphi_{w\w}\left(\w^{-1}\left(z\right)\right)$, and get
\begin{align*}
F\left(\Phi_i\Phi_w \mathbf{1}_D\right)&= \lim_{\substack{z\to \chi_D \\ z\in \mathcal{F}_D}}  \varphi_{s_i}\left(w\left(z\right)\right) \varphi_{s_i}\left(s_iw\left(z\right)\right) \varphi_{w_1}\left(\w^{-1}\left(z\right)\right)\mathbf{1}_{\w^{-1}\chi_D} \\
&= \lim_{\substack{z\to \chi_D \\ z\in \mathcal{F}_D}} \left(s_i-1\right)\left(s_i+1\right) \varphi_{w_1}\left(\w^{-1}\left(z\right)\right)\mathbf{1}_{\w^{-1}\chi_D}\\
&= \lim_{\substack{z\to \chi_D \\ z\in \mathcal{F}_D}} 0\cdot \varphi_{w_1}\left(\w^{-1}\left(z\right)\right)\mathbf{1}_{\w^{-1}\chi_D}\\
&= 0. \qedhere
\end{align*}
\end{proof}

\begin{example}\label{example4f}

Let $D$ be the periodic diagram from examples \ref{example4}, \ref{example4b}, \ref{example4d} and \ref{example4e}. Let $w=1$, and $s_i=s_1$. Then $s_iwT_0=s_1T_0$ is not standard, as $1$ and $2$ are in adjacent boxes of $T_0$ in the first row, $1=T_0\left(1,1\right)$, $2=T_0\left(1,2\right)$. The vector $E_D$ can be written as $E_D=\left(s+1\right)E'$, where $E'=\left(s_2s_3-s_3-1\right)\left(s_2+1\right)\left(s_0-\frac{1}{2}\right)\left(s_1-\frac{1}{3}\right)\left(s_3-1\right)\left(s_0-\frac{1}{2}\right)\left(s_3-1\right)\mathbf{1}_{\w^{-1}\chi_D}\in M_{\w^{-1}\chi_D}$, so 
$$F\left(\Phi_{1}\mathbf{1}_{\chi_D}\right)=\Phi_{1}E_D=\left(s_1-1\right)\left(s_1+1\right)E'=0.$$
Writing the function corresponding to the non-reduced element $s_iw\w=s_1\w$,  
$$\varphi_{s_1\w}\left(\w^{-1}z\right)=\left( \varphi_{1}^{12}\varphi_{1}^{21} \varphi_{2}^{31}\varphi_{3}^{41}\varphi_{2}^{43}\varphi_{0}^{2,-3}\varphi_{1}^{4,-3}\varphi_{3}^{2,-1}\varphi_{0}^{4,-1}\varphi_{3}^{42}\right)\left(z\right);$$ we see that the root $\alpha_{12}$ repeats, and the corresponding terms cancel to give $\varphi_{1}^{12}\varphi_{1}^{21}=0$ at $z=\chi_D$.
 \end{example}

When $i$ and $i+1$ are adjacent and in the same column of $wT_0$, the proof that $F\left(\Phi_i\Phi_w\One_D\right)=0$ is more complicated. We first give two examples, which illustrate the two possible cases.

\begin{example}\label{example4g}
Let $D$ and $\w$ be the same as in example \ref{example4e}, and let $w=s_2$. The tableau $s_2T_0$ is standard, and contains $\raisebox{-6pt}{\young(13,24)}$,
so $s_3s_2T_0$ is not standard. The vector $\Phi_3 \Phi_2 \mathbf{1}$ is in the kernel of $Q$; let us show that it is also in the kernel of $F$.

By definition,
$$F\left(\Phi_3 \Phi_2 \mathbf{1}\right)=\lim_{\substack{z=\left(\varepsilon,1+\varepsilon, -1,0\right)\\ \epsilon\to 0}} \left( \varphi_{3}^{24}\varphi_{2}^{23}\varphi_{1}^{21}\varphi_{2}^{31}\varphi_{3}^{41}\varphi_{2}^{43}\varphi_{0}^{2,-3}
\varphi_{1}^{4,-3}\varphi_{3}^{2,-1}\varphi_{0}^{4,-1}\varphi_{3}^{42}\right)\left(z\right)\cdot  \mathbf{1}.$$
The fact that the product $s_3s_2\w$ is not reduced is reflected in the fact that the root $\alpha_{24}$ is repeated. The idea is to cancel the functions corresponding to the repeated root; for that, let us first rewrite the product so that the factors corresponding to the repeated root are adjacent. Let $w_1=s_2s_1s_2s_3s_2s_0s_1s_3s_0$; then $s_3s_2\w=s_3w_1s_3$. Direct computation shows that $s_3w_1=w_1s_3$, so $s_3s_2\w=w_1s_3s_3$. The corresponding function is
$$F\left(\Phi_3 \Phi_2 \mathbf{1}\right)=\lim_{\varepsilon\to 0} \left( \varphi_{2}^{43}\varphi_{1}^{41}\varphi_{2}^{31}\varphi_{3}^{21}\varphi_{2}^{23}\varphi_{0}^{4,-3}
\varphi_{1}^{2,-3}\varphi_{3}^{4,-1}\varphi_{0}^{2,-1}  \varphi_{3}^{24} \varphi_{3}^{42}\right)\left(z\right) \cdot  \mathbf{1}.$$
Notice that the lower indices in the reduced decomposition of $\varphi_{w_1}$ have stayed the same, and the upper have been changed by the transposition $s_{24}$. Now let us cancel the last two terms, giving $$\left(\varphi_{3}^{24} \varphi_{3}^{42}\right)\left(z\right)=\left(s_3+\frac{1}{z_2-z_4}\right)\left(s_3+\frac{1}{z_4-z_2}\right)=
\left(s_3-\frac{1}{1+\varepsilon}\right)\left(s_3+\frac{1}{1+\varepsilon}\right)=\frac{\left(2+\varepsilon\right)\varepsilon}{\left(1+\varepsilon\right)^2}.$$ 
This tends to zero at $\varepsilon\to 0$, but it is not identically equal to it. To show that the product tends to zero, we need to show that the remaining factor is regular at $\varepsilon\to 0$. This factor is 
$$\left( \varphi_{2}^{43}\varphi_{1}^{41}\varphi_{2}^{31}\varphi_{3}^{21}\varphi_{2}^{23}\varphi_{0}^{4,-3} \varphi_{1}^{2,-3}\varphi_{3}^{4,-1}\varphi_{0}^{2,-1}\right) \left(z\right).$$

All factors of this are regular at $z=\chi_D$ except $\varphi_{1}^{41}$. However, its product with the two neighbouring factors is again a fusion: 
$$\varphi_{2}^{43}\varphi_{1}^{41}\varphi_{2}^{31}\left(z\right)=\left(s_2+1\right)\left(s_1-\frac{1}{\varepsilon}\right)\left(s_2-\frac{1}{1+\varepsilon}\right)=\left(s_2+1\right)\left(s_1s_2-\frac{1}{1+\varepsilon}s_1-\frac{1}{1+\varepsilon}\right),$$
and this is regular at $\varepsilon=0$. 
\end{example}

When $i$ and $i+1$ are in the same column of $wT_0$, the proof that $\Phi_i\Phi_w\One_D$ is in $\Ker F$ will go along the same lines if $i$ and $i+1$ are in the rightmost two boxes of their rows. We will show in the next lemma that in that case, $\varphi_{s_iw\w}$ can be rewritten so that two terms cancel, their product tends to $0$, and the remaining factor is regular at $z=\chi_D$. In case $i$ and $i+1$ are not in the rightmost two boxes of their rows, additional cancelations are required to show the product tends to $0$. This is illustrated in the following example.

\begin{example}\label{example4h}
Let $D$ and $w=s_2$ be the same as in example \ref{example4g}, and let $s_i=s_1$. We have 
$$F\left(\Phi_1 \Phi_2 \mathbf{1}\right)=\lim_{\substack{z=\left(\varepsilon,1+\varepsilon, -1,0\right)\\ \epsilon\to 0}} \left( \varphi_{1}^{13}
\varphi_{2}^{23}\varphi_{1}^{21}\varphi_{2}^{31}\varphi_{3}^{41}\varphi_{2}^{43}\varphi_{0}^{2,-3}
\varphi_{1}^{4,-3}\varphi_{3}^{2,-1}\varphi_{0}^{4,-1}\varphi_{3}^{42}\right)\left(z\right)\cdot  \mathbf{1}.$$
The product $s_1w\w$ is not reduced and the root $\alpha_{13}$ is repeated. Let $w_1=s_2s_1$; then $s_1w_1=w_1s_2$ and the above expression is equal to 
\begin{align*} F\left(\Phi_1 \Phi_2 \mathbf{1}\right)&=\lim_{\substack{z=\left(\varepsilon,1+\varepsilon, -1,0\right)\\ \epsilon\to 0}} \left( \varphi_{2}^{21} \varphi_{1}^{23}\varphi_{2}^{13} \varphi_{2}^{31} \varphi_{3}^{41}\varphi_{2}^{43}\varphi_{0}^{2,-3}
\varphi_{1}^{4,-3}\varphi_{3}^{2,-1}\varphi_{0}^{4,-1}\varphi_{3}^{42}\right)\left(z\right)\cdot  \mathbf{1}\\
&=\lim_{\substack{z=\left(\varepsilon,1+\varepsilon, -1,0\right)\\ \epsilon\to 0}} \left( \varphi_{2}^{21} \varphi_{1}^{23} \cdot \frac{\left(2+\varepsilon\right)\varepsilon}{\left(1+\varepsilon\right)^2} \cdot  \varphi_{3}^{41}\varphi_{2}^{43}\varphi_{0}^{2,-3}
\varphi_{1}^{4,-3}\varphi_{3}^{2,-1}\varphi_{0}^{4,-1}\varphi_{3}^{42}\right)\left(z\right)\cdot  \mathbf{1}.
\end{align*}
The term $\varphi_{3}^{41}$ has a pole at $z=\chi_D$. It used to be in fusion with the term $\varphi_{2}^{31}$, but that term got cancelled. Rewrite the product as 
$$\lim_{\epsilon\to 0} \frac{\left(2+\varepsilon\right)}{\left(1+\varepsilon\right)^2} \left( \varphi_{2}^{21} \varphi_{1}^{23} \cdot  \left(\varepsilon \varphi_{3}^{41}\right)\cdot \varphi_{2}^{43}\varphi_{0}^{2,-3}
\varphi_{1}^{4,-3}\varphi_{3}^{2,-1}\varphi_{0}^{4,-1}\varphi_{3}^{42}\right)\left(z\right)\cdot  \mathbf{1},
$$
and notice that $\lim_{\epsilon\to 0} \varepsilon \varphi_{3}^{41}=\lim_{\epsilon\to 0}\varepsilon\left(s_3-\frac{1}{\varepsilon}\right)=-1$, while all other terms are regular at $\varepsilon\to 0$. So, to show that the above limit is $0$, it is enough to show that  
$$-2 \cdot \lim_{\epsilon\to 0} \left( \varphi_{2}^{21} \varphi_{1}^{23} \cdot   \varphi_{2}^{43}\varphi_{0}^{2,-3} \varphi_{1}^{4,-3}\varphi_{3}^{2,-1}\varphi_{0}^{4,-1}\varphi_{3}^{42}\right)\left(z\right)=0.
$$

The difficulty here is that after removing the factor $\varphi_{3}^{14}$, this is no longer a matched expression. To make it matched, let us replace all $\varphi_k^{\beta}$ that were in the product to the left of $\varphi_{3}^{14}$ by  $\varphi_k^{s_{14}\left(\beta\right)}$. This change corresponds to exchanging $z_1$ and $z_4$. As all terms are regular, and $\lim z_1=\lim z_4$, this does not change the limit. The resulting matched product is equal to 
$$-2 \cdot \lim_{\epsilon\to 0} \left( \varphi_{2}^{24} \varphi_{1}^{23} \varphi_{2}^{43}\varphi_{0}^{2,-3} \varphi_{1}^{4,-3}\varphi_{3}^{2,-1}\varphi_{0}^{4,-1}\varphi_{3}^{42}\right)\left(z\right)=-2 \cdot \lim_{\epsilon\to 0} \left( \varphi_{1}^{43} \varphi_{2}^{23} \varphi_{1}^{24}\varphi_{0}^{2,-3} \varphi_{1}^{4,-3}\varphi_{3}^{2,-1}\varphi_{0}^{4,-1}\varphi_{3}^{42}\right)\left(z\right).$$

Comparing this to the expression at the very beginning of this example, we see that the cancelling which we did has had the same effect as the following steps would have had: 
\begin{enumerate}
\item Delete all three terms of the fusion ($\varphi_{2}^{31} \varphi_{3}^{41}\varphi_{2}^{43}$);
\item Replace all $\varphi_k^{\beta}$ that show up before this fusion by $\varphi_k^{s_{pq}\beta}$, where $s_{pq}$ is the transposition permutation the deleted fusion would have achieved ($s_{14}$);
\item Multiply the limit by an overall function which has a regular, nonzero limit ($-2$).
\end{enumerate}

We have demonstrated above that the original limit is zero if and only if the limit of the modified product is zero. So, we are left with the task of showing that $$\lim_{\epsilon\to 0} \left( \varphi_{2}^{43} \varphi_{1}^{23} \varphi_{2}^{24}\varphi_{0}^{2,-3} \varphi_{1}^{4,-3}\varphi_{3}^{2,-1}\varphi_{0}^{4,-1}\varphi_{3}^{42}\right)(z)=0.$$ This is a matched product, with the $\alpha_{24}$ repeating root. Comparing $\overrightarrow{13}$ (which we have just cancelled) and $\overrightarrow{24}$ (which we are to cancel next), we see that the repeating root is again corresponding to two adjacent boxes in the same column of $D$, in the same two rows as before, and that these two boxes are one place to the right of the original ones. 

To cancel the two terms corresponding to $\alpha_{24}$, let $w_2=s_0s_1s_3s_0$, and notice that $s_1w_2=w_2s_3$, so the above limit is equal to 
$$\lim_{\epsilon\to 0} \left( \varphi_{2}^{43} \varphi_{1}^{23} \varphi_{0}^{4,-3} \varphi_{1}^{2,-3}\varphi_{3}^{4,-1}\varphi_{0}^{2,-1} \varphi_{3}^{24} \varphi_{3}^{42}\right)\left(z\right).$$
The product $\left(\varphi_{3}^{24} \varphi_{3}^{42}\right)\left(z\right)=\frac{\varepsilon\left(2+\varepsilon\right)}{\left(1+\varepsilon\right)^2}\to 0$ and all the other terms are regular, so this limit is $0$. This shows that $\Phi_1 \Phi_2 \mathbf{1}\in \Ker F$. 
\end{example}

These examples illustrate the general situation. The following lemma is the combinatorial heart of the proof, and the last part we need to prove the main theorem.

\begin{lemma}\label{column}
Assume that $D$ is a periodic skew diagram with no infinite column, $\kappa \ge 2$, and $F:M_D\to M_{\w^{-1}D}$ is a morphism of $\H$ modules defined in Proposition \ref{mapF}. For every $w\in \W$ such that $wT_0$ is standard, $s_{i}wT_0$ is not, and $i$ and $i+1$ are in the same column of $wT_0$, we have $$F\left(\Phi_i\Phi_w\mathbf{1}_D\right)=0.$$
\end{lemma}
\begin{proof}
{\bf Step 1.}
As in the proof of Lemma \ref{row}, we have
\begin{align*}
F\left(\Phi_i\Phi_w \mathbf{1}_D\right)&= \lim_{\substack{z\to \chi_D \\ z\in \mathcal{F}_D}}  \varphi_{s_iw\w}\left(\w^{-1}\left(z\right)\right)\mathbf{1}_{\w^{-1}\chi_D},
\end{align*}
with $l\left(s_iw\w\right)<\left(w\w\right)$. When writing $\varphi_{s_iw\w}\left(\w^{-1}\left(z\right)\right)=\varphi_i^{\beta }\left(z\right)\varphi_{w\w}\left(\w^{-1}\left(z\right)\right)$ as a product of $\varphi_{i_k}^{\beta_k}$, the root $\beta$ appears as $\beta=-\beta_k$ in one of the factors of $\varphi_{w\w}\left(\w^{-1}\left(z\right)\right)$. 

{\bf Step 2.} Specifying a point $z$ in the set $\mathcal{F}_D$ is equivalent to specifying a set of numbers $\varepsilon_a$, one for each row of $D$, and setting $z_i=\chi_i+\varepsilon_a$ if $i$ is in row $a$ of $T_0$. Taking the limit $\lim_{\substack{z\to \chi_D \\ z\in \mathcal{F}_D}}$ is then equivalent to taking $\lim_{\substack{\varepsilon_a\to 0 \\ \forall a}}$. This is well defined, as only finitely many $z_i$ and consequently finitely many $\varepsilon_a$ appear in the calculation of $\varphi_{\w}$.

 Let $i=wT_0\left(a,b\right)$, $i+1=wT_0\left(a+1,b\right)$, and $\varepsilon=\varepsilon_{a}-\varepsilon_{a+1}$. By Lemma \ref{regular}, the limit of $\varphi_{\w}\left(\w^{-1}\left(z\right)\right)$ at $z\to \chi_D$, $z\in \mathcal{F}_D$ exists, so it can be calculated as consecutive limits: 
 $$ \lim_{\substack{z\to \chi_D \\ z\in \mathcal{F}_D}} \varphi_{s_iw\w}\left(\w^{-1}\left(z\right)\right)=\lim_{\substack{\varepsilon_j\to 0 \\  \forall j}} \lim_{\varepsilon\to 0}\varphi_{s_iw\w}\left(\w^{-1}\left(z\right)\right).$$
We will prove that the inner limit $\lim_{\varepsilon\to 0}\varphi_{s_iw\w}\left(\w^{-1}\left(z\right)\right)$ is zero. 

{\bf Step 3.} Claim: if $\varphi_{k}^{-\alpha}\varphi_{w_1}\varphi_{j}^{\alpha}$ is matched, then it is equal to $\varphi_{w_1}\varphi_{j}^{-\alpha}\varphi_{j}^{\alpha}$. 

To prove this, let 
$\left(\varphi_{k}^{-\alpha}\varphi_{w_1}\varphi_{j}^{\alpha}\right)\left(z\right)=\varphi_{k}\left(w_1s_jw_2\left(z\right)\right)\varphi_{w_1}\left(s_jw_2\left(z\right)\right)\varphi_{j}\left(w_2z\right)$, and notice that $\varphi_{j}^{\alpha}={^{w_2}\varphi_{j}}$ implies $\alpha=w_2^{-1}\alpha_{j}$, and 
$\varphi_{k}^{-\alpha}={^{w_1s_jw_2}\varphi_{s_{k}}}$ implies $-\alpha=\left(w_1s_jw_2\right)^{-1}\alpha_{k}$. From this it follows that $\alpha_k=w_1\alpha_j$, and consequently that $w_1s_jw_1^{-1}=s_k$. So, $\varphi_{k}^{-\alpha}\varphi_{w_1}=\varphi_{s_kw_1}=\varphi_{w_1s_j}$, which implies the claim. 

If $\alpha=\alpha_{pq}$, and if $\varphi_{w_1}$ is written as a product of $\varphi_{j}^{\gamma}$, the change from $\varphi_{k}^{-\alpha}\varphi_{w_1}\varphi_{j}^{\alpha}$ to $\varphi_{w_1}\varphi_{j}^{-\alpha}\varphi_{j}^{\alpha}$ means that each factor of $\varphi_{w_1}$ changes from $\varphi^{\gamma}$ to $\varphi^{s_{pq}\left(\gamma\right)}$.

{\bf Step 4.} We will consider the following procedure on $\varphi_{s_iw\w}\left(\w^{-1}\left(z\right)\right)$, which we call \emph{cancellation at column $c$}.

Assume $c\ge b$ is such that $\left(a+1,c+1\right)\in D$. Then $\left(a,c\right), \left(a+1,c\right)\in D$. Let $i'=T_0\left(a,c\right)$, $j'=T_0\left(a+1,c\right)$, $j'+1=T_0\left(a+1,c+1\right)$. The product $\varphi_{s_iw\w}\left(\w^{-1}\left(z\right)\right)$, or any which was obtained from it by cancellation at columns $c'<c$, contains the fusion terms $\varphi_k^{j',i'}\varphi_{k+1}^{j'+1,i'}\varphi_k^{j'+1,j'}$. The procedure is: 
\begin{enumerate}
\item Delete the fusion terms $\varphi_k^{j',i'}$, $\varphi_{k+1}^{j'+1,i'}$, $\varphi_k^{j'+1,j'}$. 
\item For all terms $\varphi_{k'}^{\alpha}$ to the left of the deleted fusion, replace $\alpha$ by $s_{i',j'+1}\left(\alpha\right)$.
\item Multiply by the constant $-2$.
\end{enumerate}
  
{\bf Step 5.} Claim: if $\varphi_{w_1}\left(\w^{-1}z\right)$ was obtained from $\varphi_{s_1w\w}\left(\w^{-1}z\right)$ by cancellation at columns $b,b+1,\ldots c-1$, then it is matched, regular at $z=\chi_D$, and has the root $\alpha_{i'',j''}$, $i''=T_0\left(a,c\right)$, $j''=T_0\left(a+1,c\right)$, repeating.

To prove all these claims, we write $\varphi_{s_1w\w}\left(\w^{-1}z\right)$  as a product of $\varphi_k^{\beta}$ and look at what happens to the roots $\beta$ in cancellation at a column $c'$. Steps $(1)$ and $\left(2\right)$ ensure that the product is matched. Replacing some roots $\beta$ by $s_{i',j'+1}\left(\beta\right)$ does not change the roots at any fusions before column $c'$, nor does it create any new poles that would have to be resolved by fusion, so the new expression is regular. After cancellation at columns $b,b+1,\ldots c-1$, the root $\alpha_{i'',j''}$ appears twice in the new expression: 1)once in its original place, as given by the special order; this place was after the deleted fusions, so it not influenced by any cancellations; 2)the second time, in place where the root $\alpha_{i''-1',i''}$ was in the original expression $\varphi_{s_1w\w}\left(\w^{-1}z\right)$; this is before the last deleted fusion, and was changed by step $\left(2\right)$ of the cancellation at column $c$ from $\alpha_{i''-1',i''}$ to $s_{i''-1,j''}\left(\alpha_{i''-1',i''}\right)=\alpha_{i'',j''}$.

{\bf Step 6.} Assume that $\varphi_{s_1w\w}\left(\w^{-1}z\right)$ had been changed by cancellation at columns $b,b+1,\ldots c-1$. By Step 5, it has a repeating root $\alpha=\alpha_{i'',j''}$ for $i''=T_0\left(a,c\right)$, $j''=T_0\left(a+1,c\right)$. The new expression can be written as: 
 $$\varphi_{w_1s_kw_2s_jw_3}\left(\w^{-1}\left(z\right)\right)=\varphi_{w_1}\left(s_kw_2s_jw_3\w^{-1}\left(z\right)\right)\varphi_{k}^{\alpha}\left(z\right)\varphi_{w_2}\left(s_jw_3\w^{-1}\left(z\right)\right)\varphi_{j}^{-\alpha}\left(z\right)\varphi_{w_3}\left(\w^{-1}z\right).$$
 for some $w_1,w_2,w_3$. By Step 5, all terms in this product are regular. By Step 3, this can be further written as 
$$\varphi_{w_1}\left(s_kw_2s_jw_3\w^{-1}\left(z\right)\right)\varphi_{w_2}\left(w_3\w^{-1}\left(z\right)\right)\varphi_{j}^{\alpha}\left(z\right) \varphi_{j}^{-\alpha}\left(z\right)\varphi_{w_3}\left(\w^{-1}z\right).$$
We claim that $\varphi_{w_2}\left(w_3\w^{-1}\left(z\right)\right)$ is also regular at $z=\chi_D$. 

To prove that, look at the roots appearing in expressing $\varphi_{w_2}\left(w_3\w^{-1}\left(z\right)\right)$ as a product of $\varphi_k^{\beta}$. They are obtained from the roots appearing $\varphi_{w_2}\left(s_jw_3\w^{-1}\left(z\right)\right)$ by the action of $s_{i'',j''}$. This does not change any of the fusions in $\varphi_{w_2}\left(s_kw_3\w^{-1}\left(z\right)\right)$. It creates one new factor with a pole at $\varepsilon=0$, namely the factor with the root $\alpha_{i''-1,j''}=s_{i'',j''}\left(\alpha_{i''-1,i''}\right)$. In $\varphi_{w_2}\left(s_kw_3\w^{-1}\left(z\right)\right)$, this factor appears as: 
$$\ldots \varphi^{i''-1,i''}\varphi^{i''-1,i''+1}\varphi^{i''-1,i''+2} \ldots \varphi^{i''-1,k''}\varphi^{i''-1,j''-1}\ldots $$
where $k''$ is the last entry in row $a$. For $i''+1\le k' \le k''$, the term $\varphi^{k', j''-1}$ commutes with the first $k'-2$ terms of this product, and squares to $1-\frac{1}{z_{k'}-z_{j''-1}}\ne 0$. By adding the (normalized) squares of these elements to appropriate places, then commuting one copy to the beginning of the expression and using another to transform $\varphi^{k', j''-1} \varphi^{i''-1,k'}\varphi^{i''-1,j''-1}=\varphi^{i''-1,j''-1}\varphi^{i''-1,k'}\varphi^{k', j''-1}$, we can rewrite the above expression so that it contains 
$$\ldots \ldots \varphi^{i''-1,i''}\varphi^{i''-1,j''-1}\ldots $$
as adjacent terms. Once they are adjacent, by Lemma \ref{convex}, there is a reorder of the first part of it such that $\varphi^{j''-1,i''}\varphi^{i''-1,i''}\varphi^{i''-1,j''-1}$ are all adjacent. Then $\varphi^{i''-1,i''}\varphi^{i''-1,j''-1}$ can be fused, showing that there is no pole, and $\varphi_{w_2}\left(w_3\w^{-1}\left(z\right)\right)$ is regular at $\varepsilon=0$.

{\bf Step 7.} Assume that $\left(p=T_{0}\left(a,c\right), q=T_{0}\left(a+1,c\right)\right)$ is the last pair of boxes in rows $a$, $a+1$, i.e. $\left(a+1,c+1\right)\notin D$. Assume also that by doing cancelation in columns $b,\ldots c-1$ on $\varphi_{s_iw\w}\left(\w^{-1}\left(z\right)\right)$, we got a nonreduced expression where the root $\alpha_{pq}$ is repeated. (Here, we allow the possibility of $c=b$). This matched expression looks like
$$\varphi_{w_1s_kw_2s_jw_3}\left(\w^{-1}\left(z\right)\right)=\varphi_{w_1}\left(s_kw_2s_jw_3\w^{-1}z\right)\varphi_{k}^{pq}\left(z\right)\varphi_{w_2}\left(s_jw_3\w^{-1}z\right)\varphi_{j}^{qp}\left(z\right)\varphi_{w_3}\left(\w^{-1}z\right).$$
Because $\alpha_{pq}$ is the last vertical line between those two rows, $\varphi_{j}^{qp}$ is not the first term of any fusion, so by Step 5 $\varphi_{w_3}\left(\w^{-1}z\right)$ is regular. By Step 6, this can be written as
$$\varphi_{w_1}\left(s_kw_2s_jw_3\w^{-1}z\right)\varphi_{w_2}\left(w_3\w^{-1}z\right)\varphi_{j}^{pq}\left(z\right)\varphi_{j}^{qp}\left(z\right)\varphi_{w_3}\left(\w^{-1}z\right).$$
By Step 6, all terms in this product are regular at $\varepsilon=0$. The product $\varphi_{j}^{pq}\left(z\right)\varphi_{j}^{qp}\left(z\right)$ is equal to 
$$\varphi_{j}^{pq}\left(z\right)\varphi_{j}^{qp}\left(z\right)=\left(s_j+\frac{1}{z_p-z_q}\right)\left(s_j+\frac{1}{z_q-z_p}\right)=\left(s_j+\frac{1}{1+\varepsilon}\right)\left(s_j-\frac{1}{1+\varepsilon}\right)=\frac{\left(2+\varepsilon\right)\varepsilon}{\left(1+\varepsilon\right)^{2}},$$
which has the limit $0$ when $\varepsilon$ tends to $0$. Thus, 
$$\lim_{\substack{z\to \chi_D \\ z\in \mathcal{F}_D}} \varphi_{w_1s_kw_2s_jw_3}\left(\w^{-1}\left(z\right)\right)=0.$$

{\bf Step 8.} 
Assume that $\left(p=T_{0}\left(a,c\right), q=T_{0}\left(a+1,c\right)\right)$ is not the last pair of boxes in rows $a$, $a+1$, i.e. that $\left(a+1,c+1\right) \in D$. Assume also that after doing cancelation in columns $b,b+1,\ldots c-1$ on $\varphi_{s_iw\w}\left(\w^{-1}\left(z\right)\right)$, we got a nonreduced expression where the root $\alpha_{pq}$ is repeated. The second appearance of this root is the first term of a fusion. This matched expression can be written as:
\begin{align*}
& \varphi_{w_1s_kw_2s_js_{j+1}s_jw_3}\left(\w^{-1}\left(z\right)\right)  = \quad &  &  \\
& \quad =\left( \left(^{s_kw_2s_js_{j+1}s_jw_3\w^{-1}}\varphi_{w_1}\right)\cdot \varphi_{k}^{{pq}} \cdot \left(^{s_js_{j+1}s_jw_3\w^{-1}}\varphi_{w_2}\right) \cdot \varphi_{j}^{qp} \varphi_{j+1}^{q+1,p}\varphi_j^{q+1,q} \cdot\left(^{\w^{-1}}\varphi_{w_3}\right)\right)\left(z\right) && \\
& \quad =\left( \left(^{s_kw_2s_js_{j+1}s_jw_3\w^{-1}}\varphi_{w_1}\right)\cdot \left(^{s_{j+1}s_jw_3\w^{-1}}\varphi_{w_2}\right) \cdot \varphi_{j}^{{pq}}  \varphi_{j}^{qp} \varphi_{j+1}^{q+1,p}\varphi_j^{q+1,q} \cdot\left(^{\w^{-1}}\varphi_{w_3}\right)\right)\left(z\right).
\end{align*}
By Step 6, the terms $\varphi_{w_1}$, $\varphi_{w_2}$, $\varphi_{w_3}$ of this product are regular. Let us calculate the limit of the middle term: 
\begin{align*}
&\lim_{z\to \chi_D\in \mathcal{F}_D} \left(\varphi_{j}^{{pq}}  \varphi_{j}^{qp} \varphi_{j+1}^{q+1,p}\varphi_j^{q+1,q} \right)\left(z\right)= \quad && \\
& \qquad =\lim_{\varepsilon\to 0} 
\left(s_j+\frac{1}{z_p-z_q}\right)\left(s_j+\frac{1}{z_q-z_p}\right)\left(s_{j+1}+\frac{1}{z_{q+1}-z_{p}}\right)\left(s_j+\frac{1}{z_{q+1}-z_{q}}\right) &&\\
& \qquad =\lim_{\varepsilon\to 0} \left(s_j+\frac{1}{1+\varepsilon}\right)\left(s_j-\frac{1}{1+\varepsilon}\right)\left(s_{j+1}-\frac{1}{\varepsilon}\right)\left(s_j+1\right) &&\\
& \qquad =\lim_{\varepsilon\to 0} \frac{\left(2+\varepsilon\right)\cdot \varepsilon}{\left(1+\varepsilon\right)^2}\left(s_{j+1}-\frac{1}{\varepsilon}\right)\left(s_j+1\right)=-2\left(s_j+1\right)=-2 \varphi_{j}^{\alpha_{q+1,q}}. &&
\end{align*}

Substituting this in the original expression, we get  
\begin{align*}
& \lim_{z\to \chi_D\in \mathcal{F}_D} \varphi_{w_1s_kw_2s_js_{j+1}s_jw_3}\left(\w^{-1}\left(z\right)\right)= && \\
& \qquad =\lim_{z\to \chi_D\in \mathcal{F}_D} \left( \left(^{s_kw_2s_js_{j+1}s_jw_3\w^{-1}}\varphi_{w_1}\right)\cdot \left(^{s_{j+1}s_jw_3\w^{-1}}\varphi_{w_2}\right) \cdot \left(-2\varphi_j^{q+1,q}\right) \cdot\left(^{\w^{-1}} \varphi_{w_3}\right)\right)\left(z\right).&&
\end{align*}
This product is not matched (as we effectively took $\varphi_{{j+1}}^{q+1,p}$ out). We use the fact that $
\lim z_p=\lim z_{q+1}$ and that all terms are regular to replace the above limit of a non-mathched product with a limit of the following matched product:
\begin{align*}
& \lim_{z\to \chi_D\in \mathcal{F}_D} \varphi_{w_1s_kw_2s_js_{j+1}s_jw_3}\left(\w^{-1}\left(z\right)\right)= && \\
& \qquad =\lim_{z\to \chi_D\in \mathcal{F}_D} \left( \left(^{s_{j+1} s_kw_2s_js_{j+1}s_jw_3\w^{-1}}\varphi_{w_1}\right)\cdot \left(^{s_jw_3\w^{-1}}\varphi_{w_2}\right) \cdot \left(-2\varphi_j^{q+1,q}\right) \cdot\left(^{\w^{-1}}\varphi_{w_3}\right)\right)\left(z\right) && \\
& \qquad =-2 \lim_{z\to \chi_D\in \mathcal{F}_D} \left( \left(^{s_{j+1} s_kw_2s_js_{j+1}s_jw_3\w^{-1}}\varphi_{w_1}\right)\cdot \varphi_{k}^{q+1,q} \cdot  \left(^{s_k s_jw_3\w^{-1}}\varphi_{w_2}\right) \cdot\left(^{\w^{-1}}\varphi_{w_3}\right)\right)\left(z\right) && \\
& \qquad =-2 \lim_{z\to \chi_D\in \mathcal{F}_D} \varphi_{w_1s_kw_2 w_3}  \left(\w^{-1}z\right). &&
\end{align*}

Comparing the beginning and the end of this computation, we see that the net effect of Step 8 was the cancelation at column $c$ as described in Step 4. 

{\bf Step 9.} Finally, let us put it all together. For $D$ as in the statement, and $i$, $i+1$ in the same column of $wT_0$, we calculate $F\left(\Phi_i\Phi_w\mathbf{1}_D\right)$ by showing $\lim_{\varepsilon\to 0} \varphi_{s_iw\w}\left(\w^{-1}z\right)=0$. Let $i=wT_{0}\left(a,b\right)$ $i+1=wT_0\left(a+1,b\right)$, $p=T_{0}\left(a,b\right)=w^{-1}(i)$ and $q=T_0\left(a+1,b\right)=w^{-1}\left(a+1,c\right)$. Then the root $\alpha_{pq}$ is repeated in the expression of $\varphi_{s_iw\w}$; it appears once as an exponent of $\varphi_{s_i}$ and once among the exponents of $\w$. If $\left(a,b\right)$ and $\left(a+1,b\right)$ are not the last pair of boxes in that row, then by Step 9 this limit is equal to the limit of a similar matched product, obtained from $\varphi_{s_iw\w}\left(\w^{-1}z\right)$ by cancelation at column $b$, which has a repeating root $\alpha_{p+1,q+1}$, corresponding to $p+1=T_0\left(a,b+1\right)$ and $q+1=T_0\left(a+1,b+1\right)$. Repeating this several times if necessary, we get that $\lim_{\varepsilon\to 0} \varphi_{s_iw\w}\left(\w^{-1}z\right)$ is  equal to the limit of some other matched product, obtained from $\varphi_{s_iw\w}\left(\w^{-1}z\right)$ by cancelations at all columns $b\le c' <c$, where $\left(a,c\right)$ and $\left(a+1,c\right)$ are the last pair of boxes these two rows, and the root $\alpha_{p+\left(c-b\right), q+\left(c-b\right)}$, $p+\left(c-b\right)=T_0\left(a,c\right)$, $q+\left(c-b\right)=T_0\left(a+1,c\right)$, is repeated. By Step 8, this limit is equal to $0$.
\end{proof}

\begin{proof}[Proof of Proposition \ref{two-way}]
For $\w$ constructed in Lemma \ref{defw}, in Proposition \ref{mapF} we constructed homomorphism $F:M_D\to M_{\w^{-1}\chi_D}$ of Verma modules. To see that it factors through the kernel of the surjective map $Q:M_D\to L_D$, it is, by Theorem \ref{defss}, enough to show that $F\left(\Phi_i \Phi_w \mathbf{1}_D\right)=0$ for all $w,s_i$ such that $wT_0$ is standard and $s_iwT_0 $is not standard. By Lemma \ref{adjacent}, in that case $i$ and $i+1$ are adjacent in $wT_0$, either in the same row or in the same column. If they are in the same row, then $F\left(\Phi_i \Phi_w \mathbf{1}_D\right)=0$ by Lemma \ref{row}. If they are in the same column, then $F\left(\Phi_i \Phi_w \mathbf{1}_D\right)=0$ by Lemma \ref{column}. So, $F$ factors through $Q$, and induces a nonzero homomorphism $L_D\hookrightarrow M_{\w^{-1}\chi_D}$.
\end{proof}

\section{The case $\kappa=1$}\label{kappa1}

In this section, we show that for $n\ge 2$ and $\kappa=1$, every semisimple irreducible module $L_D$ for $\H$ can be embedded into a Verma module. The choice of this Verma module and consequently the method of proof is different than in Section \ref{yescando}; in particular, the existence and properties of the element $\w\in \W$ do not carry over to $\kappa=1$ case. The construction in this section is different and straightforward. The first step is to show that the for $\kappa=1$ and fixed $n$, there are very few periodic skew diagrams.

\begin{lemma}
Let $n\ge 2$, $D$ a periodic skew diagram of degree $n$ and period $\left(m,-l\right)$, and $\kappa=m+l=1$. Then the fundamental domain of $D$ is one row with $n$ consecutive boxes, while  $D$ consists of $n$ consecutive infinite columns.
\end{lemma}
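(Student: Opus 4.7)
The plan is immediate from the constraints on $m$ and $l$. By Definition \ref{defdiagram} we have $1\le m\le n$, and by the remark following the definition, $D$ being a periodic skew diagram forces $l\ge 0$. Combined with $\kappa=m+l=1$, the only possibility is $m=1$ and $l=0$.

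First I would unpack the fundamental domain. With $m=1$, the degree condition $\sum_{a=1}^m(\lambda_a-\mu_a+1)=n$ collapses to $\lambda_1-\mu_1+1=n$, so the fundamental domain is the single row $\{1\}\times[\mu_1,\lambda_1]$ of $n$ consecutive boxes.

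Next I would push this periodicity to all rows. From the formulas $\mu_{a+km}=\mu_a-kl$ and $\lambda_{a+km}=\lambda_a-kl$ stated right after Definition \ref{defdiagram}, substituting $m=1$ and $l=0$ yields $\mu_{a+k}=\mu_1$ and $\lambda_{a+k}=\lambda_1$ for every $k\in\ZZ$. Hence every row of $D$ is exactly the segment $[\mu_1,\lambda_1]$, so
\[
D=\ZZ\times[\mu_1,\lambda_1],
\]
which is precisely $n$ consecutive infinite columns. One can double-check consistency with the skew condition (iv'): $\mu_{a+1}\le\mu_a$ and $\lambda_{a+1}\le\lambda_a$ become equalities, which is allowed.

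No step here presents a genuine obstacle; the argument is a short unwinding of definitions, and the only thing worth emphasizing is that the positivity $l\ge 0$ (forced by the skew condition) is what rules out the alternative $m=0,l=1$ that $\kappa=1$ would otherwise permit.
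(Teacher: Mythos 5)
Your proof is correct and follows the same route as the paper: from $m\ge 1$ (part of the definition) and $l\ge 0$ (forced by the skew condition), $\kappa=m+l=1$ immediately gives $m=1$, $l=0$, after which the shape of $D$ falls out of periodicity. The paper's proof is even terser (it stops at $m=1$, $l=0$ and treats the rest as obvious); the only small slip in your closing remark is the attribution: the alternative $(m,l)=(0,1)$ is excluded by the definition's requirement $m\ge 1$, not by $l\ge 0$, whose role is rather to exclude options like $(m,l)=(2,-1)$.
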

\begin{proof}
If $D$ is a periodic skew diagram, then $l\ge 0$, which together with $m\ge 1$ and $m+l=1$ implies $m=1$, $l=0$.
\end{proof}

In Section \ref{nocando} we showed that the same picture with more than one row in the fundamental domain of $D$ produces irreducible modules with torsion which cannot be embedded into a Verma module. The proof used elements associated to different rows of the fundamental domain of $D$ to find torsion. Here, we show that for if the fundamental domain has only one row, this is not the case, and we find an explicit embedding of $L_D$.

\begin{lemma}\label{Katkappa1}
Let $m=1$, $l=0$, $\kappa=1$, $\mu \in \ZZ$, $\lambda=\mu+n-1$, and let $D$ be the periodic skew diagram consisting of $n$ consecutive infinite columns, $D=\ZZ \times [\mu,\lambda]$. Then the irreducible $\ddot{H}(1)$ module $L_D$ is equal to the small Verma module $N_D$.
\end{lemma}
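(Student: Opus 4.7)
The plan is to show that the canonical surjection $Q\colon N_D\twoheadrightarrow L_D$ is injective, by verifying that every generator of $\ker(M_D\to L_D)$ already lies in $\ker(M_D\to N_D)$. By Theorem \ref{defss}(3), those generators are $\Phi_i\Phi_w\mathbf{1}_D$ for pairs $(w,i)$ with $wT_0$ standard and $s_iwT_0$ not standard. By Lemma \ref{adjacent}, $i$ and $i+1$ are then adjacent in $wT_0$; since the column values of $wT_0$ differ by $n\ge 2$ (because $D$ has $n$ infinite columns and $w(j+n)=w(j)+n$), vertical adjacency is impossible, and $i,i+1$ must lie in the same row of $wT_0$.

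Extracting $p:=w^{-1}(i)$, the horizontal adjacency forces $w(p+1)=i+1$ with $p\equiv b\pmod n$ for some $b\in\{1,\ldots,n-1\}$. I would then derive two consequences. First, $s_p=s_b\in W_I$, so $s_p\mathbf{1}_D=\mathbf{1}_D$ in $N_D$, and using $\chi_p-\chi_{p+1}=-1$ one gets
\[
\Phi_p\mathbf{1}_D=\bigl(s_p+\tfrac{1}{\chi_p-\chi_{p+1}}\bigr)\mathbf{1}_D=(s_p-1)\mathbf{1}_D=0\quad\text{in } N_D.
\]
Second, writing $w=\pi^k w_0$ with $w_0\in\W^0$, the standard-tableau condition $w(1)<\cdots<w(n)$ translates to $w_0(1)<\cdots<w_0(n)$, which makes $w_0\alpha_b=\epsilon_{w_0(b)}-\epsilon_{w_0(b+1)}$ a positive root. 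Hence $s_b$ is not a right descent of $w_0$, giving $l(ws_p)=l(w)+1=l(s_iw)$, where the last equality uses the identity $s_iw=ws_p$ coming from $ws_pw^{-1}=s_i$.

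With both $s_i\cdot w$ and $w\cdot s_p$ reduced decompositions of the same element of $\W$, the intertwiner relations then yield the algebra equality
\[
\Phi_i\Phi_w=\Phi_{s_iw}=\Phi_{ws_p}=\Phi_w\Phi_p
\]
in the localized $\H$. Applying both sides to $\mathbf{1}_D$ and using $\Phi_p\mathbf{1}_D=0$,
\[
\Phi_i\Phi_w\mathbf{1}_D=\Phi_w\Phi_p\mathbf{1}_D=\Phi_w\cdot 0=0\quad\text{in } N_D.
\]
Thus every generator of $\ker(M_D\to L_D)$ vanishes in $N_D$, so $Q$ is injective, and $L_D=N_D$.

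The main obstacle will be the bookkeeping for the length identity $l(w_0s_b)=l(w_0)+1$, which reduces through the positive-root criterion to the combinatorial assertion that $wT_0$ standard forces the $\W^0$-part $w_0$ to be increasing on $\{1,\ldots,n\}$; once this combinatorial step is pinned down, the rest of the argument is a direct calculation in the localized algebra, and it is essential that the single-row fundamental domain rules out the second adjacency case so that the horizontal analysis above is exhaustive.
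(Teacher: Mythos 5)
Your proof takes a genuinely different route from the paper's. The paper argues by picking a minimal element $v\in \Ker Q\setminus K$ (minimal first in the top length $k(v)$ of the $\CC[\W]$-support, then in the number $k'(v)$ of length-$k$ terms), observes that any leading term $w$ with $a_w\ne 0$ must have $wT_0$ non-standard, uses the single-row structure to extract a right descent $s_i$ of $w$, and subtracts $a_w\,w'\Phi_i\One_D\in K$ to lower $k'$ — a contradiction. It never invokes intertwiner identities. You instead argue generator by generator, via the algebra identity $\Phi_i\Phi_w=\Phi_{s_iw}=\Phi_{ws_p}=\Phi_w\Phi_p$; the reduction to horizontal adjacency, the computations $\chi_p-\chi_{p+1}=-1$ and $s_p\in W_I$, and the length identity $l(ws_p)=l(w)+1$ are all correct.

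However, there is a gap in the final step $\Phi_i\Phi_w\mathbf{1}_D=\Phi_w\Phi_p\mathbf{1}_D=\Phi_w\cdot 0=0$ in $N_D$. What you actually need is that $\Phi_i\Phi_w\mathbf{1}_D$, as an element of $M_D$, lies in $K=\Ker(M_D\to N_D)$, i.e.\ in the $\H$-submodule generated by $\{\Phi_j\One_D:j\in I\}$. The identity $\Phi_i\Phi_w=\Phi_w\Phi_p$ holds in the localized algebra, but $\Phi_w\notin\H$, and $K$ is only $\H$-stable. The natural way to conclude $\Phi_w\Phi_p\mathbf{1}_D\in K$ is to apply a reduced word for $w$ factor by factor: on a $\u$-eigenvector $v$ with eigenvalue $\xi$, each $\Phi_{j_r}$ acts by $s_{j_r}+\tfrac{1}{\xi_{j_r}-\xi_{j_r+1}}$, an element of $\H$, so $\Phi_{j_r}v\in\H v$ \emph{provided} $\xi_{j_r}\ne\xi_{j_r+1}$. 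The chain of eigenvalues here is $s_{j_{r+1}}\cdots s_{j_l}s_p\chi_D$, which is not the standard chain for $w$ (it passes through the non-standard tableau $s_pT_0$), and for the single-row $D$ the character $\chi_D$ has many coincidences (every content value repeats $n$ times), so coincident consecutive eigenvalues — i.e.\ poles — are a genuine possibility that you have neither ruled out nor worked around. Writing $\Phi_w\cdot 0=0$ sweeps this under the rug: once a pole occurs, $\Phi_{j_r}$ does not act on that eigenspace of $M_D$, and the stepwise interpretation breaks down. You would need to either exhibit a reduced decomposition of $s_iw$ ending in $s_p$ whose chain avoids coincident contents, or replace this step by an argument in the spirit of the fusion calculus of Section 4. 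The paper's proof, which works entirely inside $\CC[\W]$ with the Exchange Condition and no intertwiners, is designed precisely to avoid this issue.
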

\begin{proof}
Let us first describe $N_D$. The fundamental domain of $T_0$ is $\raisebox{-2pt}{\young(123\hfill \cdots \hfill n)}$, so $I=\{1,\ldots, n-1\}$. The small Verma module is the quotient $N_D=M_D/K$, where $K$ is the $\H$ submodule generated by $\Phi_i \One_D$ for $i=1,\ldots,n-1$.

Let $Q$ be the quotient map $Q:M_D\to L_D$. It factors through the surjection $M_D\to N_D$, so $K\subseteq \Ker Q$. We claim that $K=\Ker Q$.

As left $\CC[\W]$ modules, $M_D\cong \CC[\W]$, so any element of $M_D$ can be uniquely written as a finite sum $v=\sum_{w\in \W} a_w w\One_D$ for some $a_w \in \CC$. Let $k\left(v\right)$ be the length of the longest $w\in \W$ with nonzero $a_w$, and let $k'\left(v\right)$ be the number of terms $a_w w$ with nonzero $a_w$ and $l\left(w\right)=k$.

Assume that $K$ is a proper subset of $\Ker Q$. Consider the subset of $\Ker Q \setminus K$ consisting of elements which have minimal $k\left(v\right)=k$, and among such elements, pick one with the minimal $k'\left(v\right)$. Write it as $v=\sum_{w\in \W} a_w w\One_D$.

By the comments below Theorem \ref{defss}, for every $w\One_D \in M_D$ there exist $b_{w'}, c_{w'}\in \mathbb{C}$ such that 
$$Q\left(w\One_D\right)=v_{wT_0}+\sum_{l\left(w'\right)<l\left(w\right)} b_{w'} v_{w'T_0}, \qquad \textrm { if }w T_0 \textrm{ is standard },$$
$$Q\left(w\One_D\right)=\sum_{l\left(w'\right)<l\left(w\right)} c_{w'} v_{w'T_0}, \qquad \textrm { if }w T_0 \textrm{ is not standard }.$$
As $v\in \Ker Q$, we see that 
\begin{align*}
0&=Q\left(v\right)\\
&=Q\left( \sum_{l\left(w\right)=k} a_w w+ \sum_{l\left(w\right)<k} a_w w\right)\\
&= \sum_{\substack{l\left(w\right)=k \\ w T_0 \textrm{ standard}}} a_w v_{wT_0}+\sum_{l\left(w\right)<k} d_w v_{wT_0}.
\end{align*}

As the set $\{v_{wT_0}| wT_0 \textrm{standard}\}$ is a basis of $L_D$, it follows that $wT_0$ is not standard for all nonzero leading terms $a_w w$ of $v$ with $a_w\ne 0$, $l\left(w\right)=k$. As $k\left(v\right)=k$, the set of such terms is nonempty; in fact it has $k'\left(v\right)$ elements. 

Choose such a summand $a_ww$. The fundamental domain of $D$ has only one row, so all periodic tableaux are column increasing. Therefore, $wT_0$ is not row increasing. The fundamental domain of $wT_0$ is $${\Large \begin{ytableau} \scriptstyle  w(1) & \scriptstyle  w \left(2\right) & \none[\dots] & \scriptstyle  w\left(n\right) \end{ytableau}}$$ so there exists $i\in 1,2,\ldots n-1$ such that $w (i) > w \left(i+1\right)$. From this it follows that $w\alpha_i$ is a negative root, so by \cite{H} Lemma ~1.6,  $l\left(ws_i\right)<l\left(w\right)$ and by the Exchange Condition ~1.7. from \cite{H} there exists a reduced expression of $w$ ending in $s_i$, $w=w's_i$, $l\left(w'\right)=k-1$.

The element $w'\Phi_i\One_D=w'\left(s_i-1\right)\One_D$ is in $K\subseteq \Ker Q$. Consider $$v-a_w\left(w'\left(s_i-1\right)\right)\One_D=v-a_ww\One_D+a_ww'\One_D.$$ It is in $\Ker Q$, as both $v$ and $w'\Phi_i\One_D$ are. It is not in $K$, because $w'\Phi_i\One_D$ is and $v$ is not. Finally, it has $k'-1$ summands of length $k$. This contradicts the choice of $v$ as minimal with said properties and the assumption that $K\ne \Ker Q$.
\end{proof}

\begin{proposition}Let $D=\ZZ\otimes [\mu,\mu+n-1]$ be a periodic skew Young diagram of period $\left(1,0\right)$, and let $w_0$ be the longest element of the symmetric group $W_n$. Let $\chi=\chi_D$ and $\tau=w_0 \chi$. Then the map $M_D\to M_{\tau}$ of $\ddot{H}_n(1)$ modules determined by $\One_D \mapsto \Phi_{w_0}\One_\tau$ factors through $L_D$, and gives rise to an inclusion $L_D\hookrightarrow M_\tau$. \label{thirdcase}
\end{proposition}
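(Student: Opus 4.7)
The plan is to define $F\colon M_D \to M_\tau$ by $\One_D \mapsto \Phi_{w_0}\One_\tau$ (using $w_0 \in W_n \subseteq \W$) and to verify two things: that this intertwined element is well-defined as a $\chi$-eigenvector in $M_\tau$, and that $F$ annihilates the generators of the kernel of $M_D \twoheadrightarrow L_D$. A direct computation of contents gives $\chi_i = \mu + i - 2$ for $1 \le i \le n$, so $\chi$ consists of $n$ consecutive integers. For any reduced expression $w_0 = s_{i_1}\cdots s_{i_l}$ in $W_n$, each intermediate eigenvalue $s_{i_k}\cdots s_{i_l}\tau$ encountered while computing $\Phi_{w_0}\One_\tau = \Phi_{i_1}\cdots\Phi_{i_l}\One_\tau$ from right to left is a permutation of $\chi$, so all its adjacent coordinates differ and none of the denominators $1/(u_{i_k}-u_{i_k+1})$ in the intertwiners have a pole. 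Expanding $\Phi_{i_k} = s_{i_k}+(\text{scalar})$ shows $\Phi_{w_0}\One_\tau = w_0\One_\tau + (\text{lower length terms})$ in the $\CC[\W]$-basis of $M_\tau$, so it is nonzero; the intertwiner property guarantees it is a $\chi$-eigenvector, and hence $F$ is a well-defined nonzero morphism of Verma modules.

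The crux of the argument is to show that $F(\Phi_i \One_D) = 0$ for every $i = 1, \ldots, n-1$, because by Lemma~\ref{Katkappa1} these vectors generate the kernel of the surjection $M_D \twoheadrightarrow L_D = N_D$. Since $w_0$ is the longest element of $W_n$, the length condition $l(s_iw_0) < l(w_0)$ gives a reduced decomposition $w_0 = s_iw_0'$, and therefore $\Phi_{w_0} = \Phi_i\Phi_{w_0'}$. Substituting,
\[
F(\Phi_i \One_D) = \Phi_i\Phi_{w_0}\One_\tau = \Phi_i^{\,2}\,\Phi_{w_0'}\One_\tau.
\]
Now $\Phi_{w_0'}\One_\tau$ lies in the $(s_i\chi)$-eigenspace, on which the relation $\Phi_i^{\,2} = 1-(u_i-u_{i+1})^{-2}$ acts by the scalar $1 - ((s_i\chi)_i - (s_i\chi)_{i+1})^{-2} = 1 - (\chi_{i+1}-\chi_i)^{-2} = 1 - 1 = 0$, using that consecutive contents differ by~$1$. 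So the image vanishes and $F$ kills the kernel. This one scalar identity replaces the elaborate fusion-and-cancellation machinery of Section~\ref{yescando}; the main difficulty of the $\kappa \ge 2$ case simply evaporates when $\kappa=1$, because the diagram is constrained to be a single row and the ordinary longest element $w_0 \in W_n$ can substitute for the delicate $\w \in \W^0$ constructed there.

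The remaining step is formal: by Lemma~\ref{Katkappa1}, $L_D = M_D/\langle\Phi_i\One_D : 1\le i\le n-1\rangle$, so $F$ descends to a nonzero morphism $\bar F\colon L_D\to M_\tau$, and simplicity of $L_D$ forces $\bar F$ to be injective, giving the desired embedding.
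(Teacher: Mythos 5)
Your proposal is correct and follows essentially the same route as the paper's own proof: compute $\chi$ and $\tau$, verify $\Phi_{w_0}\One_\tau$ has no poles and is a nonzero $\chi$-eigenvector, use Lemma~\ref{Katkappa1} to reduce to checking $F(\Phi_i\One_D)=0$, and then kill each $\Phi_i\One_D$ via the reduced decomposition $w_0=s_iw_0'$ and the scalar identity $\Phi_i^2|_{M[s_i\chi]}=1-1=0$. The only cosmetic difference is that the paper also records the equivalent factored form $(s_i-1)(s_i+1)$, but the underlying computation is identical.
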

\begin{proof}
The fundamental domain of $T_0$ is $\raisebox{-2pt}{\young(123\hfill \cdots \hfill n)}$, so 
\begin{align*}
\chi&=\left(0,1,2,\ldots, n-1\right)+ \left(\mu-1\right)\left(1,1,\ldots ,1\right)\\
\tau&= \left(n-1,n-2,\ldots, 1,0\right)+ \left(\mu-1\right)\left(1,1,\ldots ,1\right).
\end{align*}

The vector $\Phi_{w_0}\One_\tau \in M_\tau$ is well defined (its factors have no poles because all the coordinates of $\tau$ are distinct), nonzero (as it is of the form $\Phi_{w_0}\One_\tau=w_0\One_\tau +\sum_{l\left(w\right)<l\left(w_0\right)}a_{w}w\One_{\tau}$), and an eigenvector for $\left(u_1,\ldots , u_n\right)$ with the eigenvalue  $w_0\tau=w_0^2\chi=\chi$. So, there is a unique nonzero morphism of $\H$ modules  $F: M_D\to M_\tau$ determined by $F\left(\One_D\right)=\Phi_{w_0}\One_\tau$.

We claim that $F$ is zero when restricted to $\Ker Q$, for $Q:M_D\to L_D$ the quotient map. By the previous lemma, it is enough to see that $F\left(\Phi_i\One_D\right)=0$ for $i=1,\ldots, n-1$. Pick a reduced reduced decomposition of the longest element $w_0$ starting with $s_i$, $w_0=s_iw$, $l\left(w\right)=l\left(w_0\right)-1$. Then $\Phi_{w_0}=\Phi_i \Phi_{w}$. As $\Phi_{w}\One_\tau$ is an eigenvector 
with eigenvalue $w\tau=s_i\chi$, and $\left(s_i\chi\right)_i-\left(s_i\chi\right)_{i+1}=\chi_{i+1}-\chi_i=1$, we have 
$$F\left(\Phi_i\One_D\right)=\Phi_i\Phi_{w_0}\One_\tau= \Phi_i^2\Phi_{w}\One_\tau=\left(s_i-1\right)\left(s_i+1\right)\Phi_{w}\One_\tau=0.$$

So, the map $F:M_D\to M_\tau$ is zero on the kernel of the quotient map $Q:M_D\to L_D$, and so it induces a nonzero $\H$ morphism $L_D\to M_\tau$. As $L_D$ is irreducible, this map is an inclusion. 
\end{proof}



\begin{thebibliography}{EV1}
\begin{normalsize}

\bibitem[C]{C}I. Cherednik, \emph{Special bases of irreducible finite-dimensional representations of the degenerate affine Hecke algebra},  Funktsional. Anal. i Prilozhen. 20 (1986), no. 1, 87--88.

\bibitem[D]{D}M. Dyer, \emph{Hecke algebras and shellings of Bruhat intervals}, Compositio Math. 89 (1993), no. 1, 91--115.

\bibitem[GNP]{GNP} V. Guizzi, M. Nazarov and P. Papi, \emph{Cyclic generators for irreducible representations of affine Hecke algebras}, J. Combin. Theory Ser. A 117 (2010), no. 6, 683--703. 

\bibitem[GP]{GP} V. Guizzi and  P. Papi, \emph{A  combinatorial  approach to the fusion process for the symmetric group}, European J. Combin. 19 (1998), no. 7, 835--845.

\bibitem[H]{H} J. Humphreys, \emph{Reflection groups and Coxeter groups}, Cambridge Studies in Advanced Mathematics, 29. Cambridge University Press, Cambridge, 1990.

\bibitem[L]{L} G. Lusztig, \emph{Some examples of square integrable representations of semisimple p-adic groups}, Trans. Amer. Math. Soc. 277 (1983), no 2, 623--653.

\bibitem[KNP]{KNP} S. Khoroshkin, M. Nazarov and P. Papi, \emph{Irreducible representations of Yangians}, J. Algebra 346 (2011), 189--226.

\bibitem[KS]{KS}F. Knop and S. Sahi, \emph{A recursion and a combinatorial formula for Jack polynomials}, Invent. Math. 128 (1997), no. 1, 9--22.

\bibitem[M]{M} I. G. Macdonald, \emph{Affine Hecke algebras and orthogonal polynomials}, Cambridge Tracts in Mathematics, 157. Cambridge University Press, Cambridge, 2003.

\bibitem[P1]{P1}P. Papi, \emph{A characterization of a special order in a root system}, Proc. Amer. Math. Soc. 120 (1994), no. 3, 661--665. 

\bibitem[P2]{P2}P. Papi, \emph{Inversion tables and minimal left coset representatives for Weyl groups of classical type}, 
J. Pure Appl. Algebra 161 (2001), no. 1-2, 219--234.

\bibitem[R]{R} A. Ram, \emph{Affine Hecke algebras and generalized standard Young tableaux},  J. Algebra 260 (2003), no. 1, 367--415.

\bibitem[Ro]{Ro} J. Rogawski, \emph{On modules over the Hecke algebras of a p-adic group}, Invent. Math. 79 (1985), 443--465.

\bibitem[S1]{S1} T. Suzuki, \emph{Rational and trigonometric degeneration of the double affine Hecke algebra of type $A$}, Int. Math. Res. Not. 2005, no. 37, 2249--2262. 

\bibitem[S2]{S2} T. Suzuki, \emph{Classification of simple modules over degenerate double affine Hecke algebras of type $A$}, Int. Math. Res. Not. 2003, no. 43, 2313--2339. 

\bibitem[SV]{SV} T. Suzuki and M. Vazirani, \emph{Tableaux on periodic skew diagrams and irreducible representations of the double affine Hecke algebra of type $A$}, Int. Math. Res. Not. 2005, no. 27, 1621--1656. 

\bibitem[Z]{Z} A. Zelevinsky, \emph{Induced representations of reductive p-adic groups II. On irreducible representations of $GL(n)$}, Ann. Sci. \'Ecole Norm. Sup. (4) 13 (1980), no. 2, 165--210. 




\end{normalsize}
\end{thebibliography}
\end{document}